\newcommand{\bbF}{{\ensuremath{\mathbb F}} }
\newcommand{\bbP}{{\ensuremath{\mathbb P}} }
\newcommand{\bbW}{{\ensuremath{\mathbb W}} }
\newcommand{\cA}{{\ensuremath{\mathcal A}} }
\newcommand{\cB}{{\ensuremath{\mathcal B}} }
\newcommand{\cC}{{\ensuremath{\mathcal C}} }
\newcommand{\cE}{{\ensuremath{\mathcal E}} }
\newcommand{\cF}{{\ensuremath{\mathcal F}} }
\newcommand{\cI}{{\ensuremath{\mathcal I}} }
\newcommand{\cL}{{\ensuremath{\mathcal L}} }
\newcommand{\cO}{{\ensuremath{\mathcal O}} }
\newcommand{\cR}{{\ensuremath{\mathcal R}} }
\newcommand{\cS}{{\ensuremath{\mathcal S}} }
\newcommand{\cX}{{\ensuremath{\mathcal X}} }
\newcommand{\cZ}{{\ensuremath{\mathcal Z}} }
\newcommand{\dC}{{\ensuremath{\mathrm C}} }
\newcommand{\dD}{{\ensuremath{\mathrm D}} }
\newcommand{\R}{\mathbb{R}}
\renewcommand{\P}{\mathbb{P}}
\newcommand{\E}{\mathbb{E}}
\newcommand{\ind}{\ensuremath{\mathbf{1}}}
\DeclarePairedDelimiterX{\inprod}[2]{\langle}{\rangle}{#1, #2}
\DeclareMathOperator{\tr}{tr}
\newcommand{\changeepsilon}
{
\let\temp\epsilon
\let\epsilon\varepsilon
\let\varepsilon\temp
}
\newcommand{\changetheta}
{
\let\temp\theta
\let\theta\vartheta
\let\vartheta\temp
}
\newcommand{\changephi}
{
\let\temp\phi
\let\phi\varphi
\let\varphi\temp
}
\theoremstyle{plain}
\newtheorem{theorem}{Theorem}[section]
\newtheorem*{theorem*}{Theorem}
\newtheorem{lemma}[theorem]{Lemma}
\newtheorem*{lemma*}{Lemma}
\newtheorem*{proposition*}{Proposition}
\theoremstyle{definition}
\newtheorem{definition}{Definition}[section]
\newtheorem{assumption}{Assumption}[section]
\newtheorem{hypothesis}[assumption]{Hypothesis}
\theoremstyle{remark}
\newtheorem{remark}{Remark}[section]
\newtheorem*{remark*}{Remark}
\definecolor{darkviolet}{rgb}{0.58, 0.0, 0.83}
\definecolor{gre}{rgb}{0.03,0.50,0.03}
\numberwithin{equation}{section}
\newcommand{\mail}[1]{\href{mailto:#1}{\normalfont\texttt{#1}}}
\def\@setthanks{\vspace{-\baselineskip}\def\thanks##1{\@par##1\@addpunct.}\thankses}
\title[Multi-dimensional Ergodic Singular Stochastic Control]{Optimal Policy Characterization for a Class of Multi-Dimensional Ergodic Singular Stochastic Control Problems}
\author[A.~Calvia]{Alessandro~Calvia\textsuperscript{\MakeLowercase{a},1}}
\thanks{\noindent \textsuperscript{a} Department of Mathematics, Politecnico di Milano, Milan (Italy).}
\author[F.~Cannerozzi]{Federico~Cannerozzi\textsuperscript{\MakeLowercase{b},2}}
\thanks{\noindent \textsuperscript{b} Bielefeld University, Center for Mathematical Economics (IMW), Bielefeld (Germany).}
\author[G.~Ferrari]{Giorgio~Ferrari\textsuperscript{\MakeLowercase{b},3}}
\thanks{\noindent
\noindent \textsuperscript{1} E-mail: \mail{alessandro.calvia@polimi.it}.
\\
\noindent \textsuperscript{2} E-mail: \mail{federico.cannerozzi@uni-bielefeld.de}.
\\
\noindent \textsuperscript{3} E-mail: \mail{giorgio.ferrari@uni-bielefeld.de}}
\date{\today}
\begin{document}

\changephi
\changetheta
\changeepsilon
\allowdisplaybreaks
	
\begin{abstract}
In ergodic singular stochastic control problems, a decision-maker can instantaneously adjust the evolution of a state variable using a control of bounded variation, with the goal of minimizing a long-term average cost functional. The cost of control is proportional to the magnitude of adjustments. This paper characterizes the optimal policy and the value in a class of multi-dimensional ergodic singular stochastic control problems. These problems involve a linearly controlled one-dimensional stochastic differential equation, whose coefficients, along with the cost functional to be optimized, depend on a multi-dimensional uncontrolled process $Y$. We first provide general verification theorems providing an optimal control in terms of a Skorokhod reflection at $Y$-dependent free boundaries, which emerge from the analysis of an auxiliary Dynkin game. We then fully solve two two-dimensional optimal inventory management problems. To the best of our knowledge, this is the first paper to establish a connection between multi-dimensional ergodic singular stochastic control and optimal stopping, and to exploit this connection to achieve a complete solution in a genuinely two-dimensional setting.
\end{abstract}
\maketitle

\noindent \textbf{Keywords:} ergodic singular stochastic control; Dynkin games; free boundaries; Skorokhod reflection; variational inequality; optimal inventory management.

\smallskip

\noindent \textbf{AMS 2020:} 93E03; 60G40; 49L20; 35R35; 90B05.

\smallskip

\bigskip

\section{Introduction} \label{sec:intro}

\subsection*{State of the art} In singular stochastic control problems, a decision-maker can instantaneously adjust the dynamics of an underlying state process via a control process that has paths of bounded variation. The control's action may cause discontinuities in the state process and may exhibit (and typically does exhibit) singular behavior, in the sense that the (random) Borel-measure on the time axis induced by the optimal control process may be singular with respect to the Lebesgue measure. 

A rich body of literature addresses infinite time-horizon discounted problems as well as finite time-horizon problems and their applications. In such problems, the optimal policy typically involves reflecting the state process at the topological boundary (free boundary) of the so-called no-action (also known as continuation) region, where the optimally controlled process should be maintained with minimal effort. The construction of the optimal control process is then framed in terms of the solution to a Skorokhod reflection problem (see \cite{boryc2016characterization,kruk2000optimal,kruk2000optimal-b} and the more recent \cite{dianetti2023spa} for a review of the literature). Under suitable conditions, the boundary of the no-action region in a singular stochastic control problem can be identified with the free boundary arising in an auxiliary optimal stopping problem, whose value function is given by the derivative (in the direction of the optimally controlled state) of the singular stochastic control problem's value function. See, among others, \cite{baldursson1996irreversible,karatzas1984connections} for the connection between monotone problems and optimal stopping; \cite{boetius1998connections,karatzas2001connections} for the relation between bounded-variation problems and Dynkin games (zero-sum games of optimal stopping); and \cite{de2018stochastic} for a connection between nonzero-sum games of singular stochastic control and games of optimal stopping. Thanks to the connection with optimal stopping, the characterization of the optimal policy is closely linked to the study of obstacle problems, which are more tractable than variational inequalities with gradient constraints, as those appearing in infinite time-horizon discounted or finite time-horizon singular stochastic control problems. Today, there exists a satisfactory understanding of those classes of singular stochastic control problems, and significant progress has been made in characterizing the optimal policy in various multi-dimensional settings (see, e.g., \cite{chiarolla1994optimal,chiarolla2000inflation,de2017optimal,ferrari2018optimal,soner1989regularity}).

On the other hand, singular stochastic control problems with an ergodic (long-time average) performance criterion have been studied far less than their infinite-time horizon discounted or finite-time horizon counterparts. They arise, for example, in optimal inventory management (see, e.g., \cite{helmes2017continuous,taksar1985average}), { in approximation of queuing systems under heavy traffic (see, e.g., \cite{arapostathis2015ergodic,arapostathis2016ergodic,Budhiraja2011ergodic} and \cite{taksar1991queueing,taksar1992queueing} for the discounted case)}, in applications where it is crucial to consider the payoffs received by successive generations, such as the exploitation of a natural resource \cite{hening2022optimal,cohen2022optimal,liang2020ergodic}, and also appear in the asymptotic expansion of optimal consumption-investment problems with small transaction costs (see \cite{possamai2015homogenization,soner2013homogenization}).
We also refer to \cite{cohen2024existence} for the existence of relaxed solutions to multi-dimensional stationary singular stochastic control problems and their mean-field game counterpart, to \cite{kurtz1998markov} for the linear programming formulation of ergodic singular control problems, as well as to \cite{christensen2024data} for data-driven rules in ergodic reflection problems.

In ergodic singular stochastic control problems, the dynamic programming equation (DPE) takes the form of a variational problem for the pair $(v,\lambda^{\star})$, where $v$ is the so-called potential function and $\lambda^{\star}$ is the value of the problem, which is constant due to the ergodic setting -- see \cite{hynd2012eigenvalue,menaldi1992singular,menaldi1999optimal} for studies of such equations via methods from the theory of partial differential equations (PDEs). In one-dimensional settings, the dynamic programming equation reduces to an ordinary differential equation with a derivative constraint, allowing explicit solutions via the "guess-and-verify" method. Typically, the no-action region is an interval on the real line (possibly unbounded) and $\lambda^{\star}$ can be expressed in terms of the problem's data and the free boundary. For explicitly solvable ergodic problems in one dimension, see \cite{hening2022optimal,kunwai2022ergodic,liang2020ergodic,taksar1985average,weerasinghe2002stationary,weerasinghe2007abelian}, among others. Additionally, \cite{lokka2011model,lokka2013long} address two-dimensional fully degenerate problems that remain solvable using the "guess-and-verify" approach. However, the absence of a probabilistic representation for the potential function $v$ makes, to the best of our knowledge, the characterization of the optimal singular control policy an open problem in truly multi-dimensional settings.

\subsection*{Our contributions} In this paper, we contribute to this open question by providing a characterization of the optimal policy in a class of multi-dimensional singular stochastic control problems. This is achieved by establishing a novel connection to optimal stopping. Our setting considers a one-dimensional It\^o-diffusion $X$ that is linearly controlled by a bounded-variation process (the singular control) and has drift and volatility coefficients modulated by an uncontrolled (factor) process $Y$ valued in $\mathbb{R}^d$, $d\geq 1$. This latter process also affects the long-term average expected cost functional that the decision-maker aims to minimize.  

We first provide a general verification theorem (see Theorem \ref{thm:verification} below) that allows us to recover the value of the problem as the limit superior of the expected long-term average of the process $(\lambda(Y_t))_{t\geq0}$, where the Borel-measurable function $\lambda:\mathbb{R}^d \to \mathbb{R}$ appears, together with a function $V:\mathbb{R}^d \times \mathbb{R} \to \R$, as the solution to an auxiliary PDE with gradient constraints. It is important to note that this PDE is \emph{not} the dynamic programming equation associated with the ergodic problem, meaning that the pair $(V,\lambda)$ does \emph{not} represent the potential function and the problem’s value, respectively. In what follows, we shall refer to $V$ as the pseudo-potential function. Furthermore, it is important to note that our results hold even in the case of a process $Y$ that is not ergodic and thus does not admit a stationary distribution (see Remark \ref{rmk:ergodicity} below).

In Theorem \ref{thm:verification}, we also show that a control process (if it exists) that keeps the state process \`a la Skorokhod in the region where the gradient constraints are not binding is, in fact, optimal. To identify this region and thereby characterize the optimal control, Theorems \ref{thm:HJB_sol} and \ref{thm:HJB_sol_weak} demonstrate (under different regularity conditions) that $(V,\lambda)$ from Theorem \ref{thm:verification} can be constructed by analyzing an auxiliary optimal stopping problem. Specifically, this is a zero-sum optimal stopping game with value $U$. In particular, $V$ is given as the integral of $U$ with respect to $x$, and $\lambda(y)$ can be determined in terms of the free boundaries associated with $U$ and the problem’s data. It is worth noticing that a connection to optimal stopping in this spirit had already been observed in \cite{karatzas1983class,taksar1985average} in the one-dimensional setting (see also \cite{cannerozzi2024cooperation,cao2023stationary,dianetti2023ergodic} in the context of one-dimensional stationary mean-field games of singular controls with scalar interaction). However, such a relation is obtained via direct computations based on the fact that the underlying process is one-dimensional, thus leading to a dynamic programming equation for the problem's value which is an explicitly solvable ODE with derivative constraint. To the best of our knowledge, this is the first paper to establish a connection between optimal stopping and ergodic singular stochastic control in multiple dimensions and to exploit it as a road-map for characterizing the value and the optimal policy.

The general verification theorems \ref{thm:verification}, \ref{thm:HJB_sol}, and \ref{thm:HJB_sol_weak} are then tested in two case studies arising in the field of inventory control.
We consider two different inventory models with stochastic mean-reversion levels.
The two examples differ in their mean-reversion level dynamics and observability specifications, such that, in the first case, the weaker conditions of Theorem \ref{thm:HJB_sol_weak} are met, while in the second case study, Theorem \ref{thm:HJB_sol} applies.

The first case study treated in Section \ref{sec:application:partial_observation} deals with the optimal management of an inventory for which the level of mean-reversion is modulated by a two-state continuous-time Markov chain, which, however, is not observable by the decision maker. This leads to an ergodic singular stochastic control problem under partial observation, which we address by studying the equivalent separated problem. In the latter, the state process is a truly two-dimensional diffusion, whose first component is given by the dynamics of the inventory (which can be instantaneously increased or decreased via a control of bounded variation), and whose second component is the filter or belief process. Given that these two components are driven by the same Brownian motion (the so-called innovation process), the diffusion term of the state process is degenerate. Following the receipt of Theorems \ref{thm:verification} and \ref{thm:HJB_sol_weak}, we identify a Dynkin game of optimal stopping whose value $U$ is expected to coincide with the derivative of the pseudo-potential function in the direction of the inventory component of the state process. The analysis of such a Dynkin game is anything but standard. In fact, the aforementioned degeneracy of the state process leads to the fact that the second-order differential operator appearing in the variational inequality associated with $U$ is of parabolic type. By introducing a change of variables that expresses the differential operator in its canonical form, and studying the optimal stopping problem in those new coordinates, we are able to show that $U \in \dC^1(\mathbb{R}^2)$, and that two bounded, non-increasing belief-dependent free boundaries $a_{\pm}$ trigger the saddle point of the stopping game.
{ The free boundaries $a_\pm$ yield the optimal control process, as it is constructed as the process that reflects the (optimally controlled) inventory process $X^{\star}$ at the belief-dependent free boundaries $a_{\pm}$ so that $a_+(Y_t) \leq X^{\star}_t \leq a_-(Y_t)$ $d\P \otimes dt$-almost everywhere. The results follows from the application of Theorems \ref{thm:verification} and \ref{thm:HJB_sol_weak}, which can be applied thanks to the regularity properties of $U$.}
It is worth noting that the continuous differentiability of $U$ is proven by suitably employing techniques as in \cite{de2020global}, after proving that the boundary points are probabilistically regular for the state process. The proof of this fact, in turn, hinges on the Feller property of the state process, which we show by proving that the infinitesimal generator of the state process, though not uniformly elliptic, is hypoelliptic, as it satisfies the H\"ormander conditions (see also \cite{ernst2024gapeev}).

In the second case-study, we deal with a nondegenerate problem (in the sense that the involved second-order differential operator is uniformly elliptic) for which the conditions of Theorems \ref{thm:verification} and \ref{thm:HJB_sol} are met. In this example, the stochastic mean-reversion level of the stored good evolves as a mean-reverting diffusion itself.
In this case, the Dynkin game which is expected to be associated (in the sense of Theorem \ref{thm:HJB_sol}) to the ergodic singular stochastic control problem has value function $U \in \bbW^{2,\infty}_{loc}(\R^2)$ and its saddle point is given by the entry times of the underlying state process to the regions where $U$ equates the two obstacles (the stopping regions). The boundaries of those sets can be described in terms of two non-increasing curves (the free boundaries) which are finite and Lipschitz-continuous.
The Lipschitz-continuity of the free boundaries is functional in the application of Theorem \ref{thm:HJB_sol} as it allows to obtain the required transversality condition. Moreover, the Lipschitz property of $a_\pm$ is also an interesting result \textit{per se}, given that in certain obstacle problems the Lipschitz property is the preliminary regularity needed to upgrade -- via a bootstrapping procedure and suitable technical conditions -- the regularity of the free boundary to $\dC^{1,\alpha}$-regularity, for some $\alpha \in (0,1)$ and eventually to $\dC^{\infty}$-regularity (see the introduction of \cite{deangelis_stabile2019lipschitz} for a discussion and literature review on this).

The rest of this paper is organized as follows: in Section \ref{sec:model}, we define the ergodic stochastic singular control problem, state the assumptions and prove the verification theorems.
In particular, {in} Section \ref{sec:1stverthm} we introduce the PDE for the pair pseudo-potential function-value profile $(V,\lambda)$, and we state and prove a preliminary verification theorem, while in Section \ref{sec:Dynkinconn} we identify the auxiliary Dynkin game, and we show how to use its value function to build a solution $(V,\lambda)$ and an optimal control.
In Section \ref{sec:rmks_thms}, we gather some remarks on the previous results.
Section \ref{sec:application} contains the application of previous results to inventory managements problems. In particular, in Section \ref{sec:application:partial_observation} an inventory control problem with partially observable demand is considered, while Section \ref{sec:application:full_observation} addresses an inventory control problem with fully observable demand.


\subsection{Notation}\label{sec:notation}
For $n \geq 1$, we denote by $\R^{n \times n}$ the set of $n \times n$ real-valued matrices.
{ We follow the convention that $\R^d$ is identified by column-vectors, i.e. $d \times 1$ matrices.}

For any $n, k \geq 1$ and any $\cO \subseteq \R^n$, we denote by $\dC^{k}(\cO)$ the set of real-valued $k$-times continuously differentiable functions.
When $k=\infty$, we mean that the function is infinitely many times differentiable.
Analogously, we write $\dC^{k}_b(\R^d)$ to denote the space of function $f \in \dC^k(\cO)$ bounded and with bounded derivatives.
We denote by $\dC^\infty_c(\cO)$ the set of infinitely many times real-valued functions with compact support.
We indicate by $L^{\infty}_{loc}(\cO)$ the set of all locally bounded functions with respect to Lebesgue measure.
We denote by $\bbW^{2,\infty}_{loc}(\cO)$ the Sobolev space of all functions $f \in L^{\infty}_{loc}(\cO)$ such that the partial derivatives up to the second order exist in the weak sense and are in $L^{\infty}_{loc}(\cO)$.
For any $f$ in $\dC^2(\cO)$ or $\bbW^{2,\infty}_{loc}(\cO)$, we denote the (weak) derivative with respect to $x_1$ and $x_2$ by $f_{x_1 x_2}$.

For $n_1, \,n_2 \geq 1$ and $\cO \subseteq \R^{n_1} \times \R^{n_2}$, we denote by $\dC^{2,1}(\cO)$ the set of real-valued continuous functions on $\cO$ that are continuously differentiable twice with respect to the first variable and once with respect to the second. We write $\dC^{2,1}_b(\cO)$ when the considered function is bounded, with bounded derivatives.

\section{The Problem}
\label{sec:model}

Let $d \geq 1$.
Let $(\Omega, \cF, \bbF \coloneqq (\cF_t)_{t \geq 0}, \bbP)$ be a complete filtered probability space, with $\bbF$ satisfying the usual assumptions, on which $\bbF$-adapted standard Brownian motion $W$ and $d$-dimensional Brownian motion $B$ are defined.
We assume that $W$ and $B$ are correlated, in the sense that there exists a vector $\rho=(\rho_i)_{i=1}^d$, $-1 \leq \rho_i \leq 1$, so that the { cross-variation} $\langle W, B_i\rangle_t$ is equal to $\rho_i t$, for any $t \geq 0$, for any $i=1,\dots,d$\footnote{For a possible construction, consider $(B^i)_{i=1}^d$ independent standard Brownian motions, and $(\rho_i)_{i=1}^d$ so that $\sum_{i=1}^d \rho_i^2 = 1$. Then, $W=\sum_{i=1}^d\rho_i B^i$ is a standard Brownian motion so that $\langle W, B_i\rangle_t = \rho_i t$.}.
Let $\xi$ be a singular control, i.e. a process $\xi:[0,\infty) \times \Omega \to \R$ which is $\bbF$-adapted, $\xi$ càdlàg, $\xi_{0-} = 0$ $\P$-a.s. and whose total variation in any interval $[0,T]$, $T>0$, is finite, i.e.\ $\vert \xi \vert_{[0,T]} < \infty$.
Moreover, we assume that $\E[\vert \xi \vert_{[0,T]}]< \infty$ for any $T > 0$.
We identify $\xi$ with its positive and negative parts given by the Jordan decomposition, i.e.\ $\xi_t = \xi^+_t - \xi^-_t$ $\P$-a.s. with $\xi^\pm$ non-decreasing and with { disjoint supports}.

\smallskip
Let $(b,\sigma): \R \times \R^d \to \R \times \R$ and $(\eta,\zeta): \R^d \to \R^d \times \R^{d \times d}$ be measurable.
For any $(x,y) \in \R \times \R^d$ and singular control $\xi$, we consider $(X^\xi,Y)$ to be the solution of the pair of stochastic differential equations
\begin{equation}\label{eq:SDE}
\left\{ \begin{aligned}
& dX^\xi_t = b(X^\xi_t, Y_t) \, dt + \sigma(X^\xi_t, Y_t) \, dW_t + d\xi^+_t - d\xi^-_t, && X^\xi_{0^-} = x, \\
& dY_t = \eta(Y_t)dt + \zeta(Y_t)dB_t, && Y_0= y.
\end{aligned} \right.
\end{equation}
Whenever necessary, to stress the dependence of the solution to \eqref{eq:SDE} on the initial conditions $(x,y) \in \R \times \R^d$,
we write $\E_{x,y}[\cdot]$ to denote the expectation under $\P_{x,y}(\cdot)=\P(\cdot \vert X^{\xi}_{0^-}=x, Y_0 = y)$.
Analogously, we write $\E_{y}[\cdot]$ 
to denote the expectation under $\P_{y}(\cdot)=\P(\cdot \vert Y_0 = y)$.
Conditions on $b,\sigma,\eta,\zeta$ appear in Assumption \ref{assumptions} below.

\smallskip
In the sequel, the following class of admissible controls, satisfying a suitable growth condition, will be employed:
\begin{definition}\label{def:admissible_policies}
A singular control $\xi$ is admissible if 
\begin{equation}\label{eq:controls:admissibility}
    \varlimsup_{T \to \infty} \frac{1}{T}\E_{x,y}[ \vert X^\xi_T \vert ] = 0.
\end{equation}
We denote by $\cB$ the set of admissible controls.
\end{definition}
We then consider the long-time average (ergodic) singular stochastic control problem
\begin{equation}\label{eq:ctrl_pb}
\inf_{\xi \in \cB} \varlimsup_{T \to +\infty} \frac{1}{T} \, \E_{x,y}\left[\int_0^T c(X_t^\xi, Y_t) \, dt + K_+ \xi^+_T + K_- \xi^-_T\right],
\end{equation}
where $c \colon \R \times \R^d \to [0,+\infty)$ is a suitable measurable function and $K_+, K_- > 0$ are fixed constants.
We say that $\xi^\star$ is optimal for the ergodic singular control problem if the associated cost functional as in  \eqref{eq:ctrl_pb} is minimal.

\smallskip
Let $(X^0,Y)$ be the uncontrolled process, solving the SDE
\begin{equation}\label{eq:SDE:X0}
\left\{ \begin{aligned}
& dX^0_t = b(X^0_t, Y_t) \, dt + \sigma(X^0_t, Y_t) \, dW_t , && X^0_0 = x \in \mathbb{R}, \\
& dY_t = \eta(Y_t)dt + \zeta(Y_t)dB_t, && Y_0= y \in \R^d,
\end{aligned} \right.
\end{equation}
and denote by $\cL_{(X^0,Y)}$ its infinitesimal generator, i.e.
\begin{multline}\label{eq:generator:X0}
    \cL_{(X^0,Y)}f(x,y) = b(x,y)f_x(x,y) + \frac{1}{2}\sigma^2(x,y)f_{xx}(x,y) + \sum_{i=1}^d \eta_i(y)f_{y_i}(x,y) \\
    + \frac{1}{2}\sum_{i,j=1}^d a_{ij}(y)f_{y_i y_j}(x,y) + \sum_{i,j=1}^d { \rho_j \sigma(x,y)\zeta_{ij}(y) f_{x y_i}(x,y) }
\end{multline}
where $f \in \dC^2_b(\R \times \R^d)$ and $a(y) := (\zeta \zeta^\top)(y)$.
We denote by $(\rho\zeta): \R^d \to \R^d$ the function given by { $(\rho\zeta)_i(y) = \sum_{j=1}^d\rho_j\zeta_{ij}(y)$}, so that the generator $\cL_{(X^0,Y)}$ can be expressed equivalently in matrix form as
\begin{multline}\label{eq:generator:X0_matrix}
    \cL_{(X^0,Y)}f(x,y) = b(x,y)f_x(x,y) + \frac{1}{2}\sigma^2(x,y)f_{xx}(x,y) +  \eta(y)\nabla_{y}f(x,y) \\
    + \frac{1}{2}\tr \big( a(y)\nabla^2_y f(x,y) \big) + \sigma(x,y)(\rho\zeta)(y)\nabla_{y}f_x(x,y).
\end{multline}

We make the following standard assumptions on the functions involved so far:
\begin{assumption}\label{assumptions}
\mbox{}
\begin{enumerate}
    \item $b(x,y)$ and $\sigma(x,y)$ are Lipschitz-continuous with at most linear growth; moreover, the partial derivatives $b_x(x,y)$ and $\sigma_x(x,y)$ exist and they are continuous, and $(\sigma \sigma_x)(x,y)$ is locally Lipschitz, jointly in $(x,y)$;
    \item $\eta(y)$ and $\zeta(y)$ are Lipschitz-continuous with at most linear growth; moreover, $\sigma_x(x,y)\zeta_{ij}(y)$ is locally Lipschitz jointly in $(x,y)$;
    \item $c(x,y)$ is jointly continuous in $(x,y)$ and continuously differentiable with respect to $x$.
    Moreover, there exist $p \geq 1$ and $\kappa > 0$ so that $\vert c(x,y) \vert \leq \kappa (1 + \vert x \vert^p + \vert y \vert^p ) $ for any $(x,y) \in \R \times \R^d$.
\end{enumerate}
\end{assumption}

{ Points 1 and 2 in Assumptions 2.1 are standard and are related to existence and uniqueness of stochastic differential equations (see \eqref{eq:SDE:X0} and \eqref{eq:X_hat_0}). The differentiability assumptions on $b(x,y)$ and $c(x,y)$ guarantee that the auxiliary Dynkin game (see \eqref{eq:dynkin}) is well-defined.}

\subsection{A preliminary verification theorem}\label{sec:1stverthm}

In order to state and prove the first verification theorem for problem \eqref{eq:ctrl_pb} (see Theorem \ref{thm:verification} below), we introduce the following partial differential equation (PDE) for the pair $(V,\lambda)$, where $V \colon \R \times \R^d \to \R$ and $\lambda \colon \R^d \to \R$:
\begin{equation}\label{eq:HJB}
\min\{\cL_{(X^0,Y)} V(x,y) + c(x,y) - \lambda(y), -V_x(x,y) + K_-, V_x(x,y) + K_+\}=0, \; (x,y) \in \R \times \R^d,
\end{equation}
{ and we divide $\R \times \R^d$ in the following subsets $\cI$, $\cA_+$ and $\cA_-$:}
\begin{equation}\label{eq:HJB:waiting_exert}
\begin{aligned}
\cA_+ & \coloneqq \{(x,y) \in \R \times \R^d \colon V_x(x,y) + K_+ \leq 0\}, \\
\cA_- & \coloneqq \{(x,y) \in \R \times \R^d \colon K_- - V_x(x,y) \leq 0\}, \\
{  \cI } & \coloneqq \{ { (x,y) \in \R \times \R^d \colon -K_+ < V_x(x,y) < K_-} \}.
\end{aligned}
\end{equation}
We say that $(V,\lambda)$ is a solution to \eqref{eq:HJB} if $V \in \bbW^{2,\infty}_{loc}(\R \times \R^d)$, $\lambda \in L^\infty_{loc}(\R^d)$ and \eqref{eq:HJB} holds true for a.e. $(x,y) \in \R \times \R^d$. 
Notice that, if $(V,\lambda)$ is a solution to \eqref{eq:HJB}, then $V_x \in \dC(\R \times \R^d)$ by Sobolev's embedding.
It follows then that $\cA_+$ and $\cA_-$ are closed and $\cI$ is open.
Moreover, as $(V,\lambda)$ solves equation \eqref{eq:HJB} a.e., we have that $V_x(x,y) + K_+ \geq 0$ for almost all $(x,y)$, and similarly $-V_x(x,y) + K_- \geq 0$ a.e.
This fact, paired with the definition of $\cA_\pm$ and the continuity of $V_x$, implies that $\cA_+ = \{ (x,y): \, V_x(x,y) + K_+ = 0 \}$, $\cA_- = \{ (x,y): \, V_x(x,y) - K_- = 0 \}$ and 
\begin{equation}\label{eq:general:equality_almost_everywhere}
    \cL_{(X^0,Y)} V(x,y) + c(x,y) - \lambda(y) = 0 \quad \text{ for a.e. } (x,y) \in \cI.
\end{equation}
The following verification theorem establishes the relationship of the pair $(V,\lambda)$ with the value of problem \eqref{eq:ctrl_pb} and the optimal control $\xi^\star$.
\begin{theorem}
\label{thm:verification}
Recall \eqref{eq:HJB:waiting_exert}. Let $(V,\lambda)$ be a solution to \eqref{eq:HJB} such that $V \in \dC^2(\cI)$, { $\lambda \in \dC(\R^d)$}, $\vert V(x,y) \vert \leq \kappa (1 + \vert x \vert )$ for some $\kappa > 0$ and $(\lambda(Y_t))_{t \in [0,T]}$ is $d\P \otimes dt$ integrable, for any $T > 0$.
Then,
\begin{equation}\label{eq:verification:value_geq_equal}
\varlimsup_{T \to +\infty} \frac 1T \, \E_{y}\left[\int_0^T \lambda(Y_s) \, ds\right] \leq \inf_{\xi \in \cB} \varlimsup_{T \to +\infty} \frac{1}{T} \, \E_{x,y}\left[\int_0^T c(X_t^\xi, Y_t) \, dt + K_+ \xi^+_T + K_- \xi^-_T\right].
\end{equation}
Moreover, suppose that there exists an admissible control $\xi^\star \in \cB$ such that { $(X_t^{\xi^\star}, Y_t) \in \cI$, $\P$-a.s., for all $t \geq 0$}, and such that, for all $t \geq 0$,
\begin{equation*}
\xi^{\star, +}_t = \int_{[0,t]} \ind_{(X_s^{\xi^\star}, Y_s) \in \cA_+} \, d\xi^{\star, +}_s, \qquad
\xi^{\star, -}_t = \int_{[0,t]} \ind_{(X_s^{\xi^\star}, Y_s) \in \cA_-} \, d\xi^{\star, -}_s, \quad \mathbb{P}-\text{a.s.}
\end{equation*}
Then
\begin{multline}\label{eq:verification:value_leq_equal}
\varlimsup_{T \to +\infty} \frac 1T \, \E_{y}\left[\int_0^T \lambda(Y_s) \, ds\right] = \varlimsup_{T \to +\infty} \frac{1}{T} \, \E_{x,y}\left[\int_0^T c(X_t^{\xi^\star}, Y_t) \, dt + K_+ \xi^{\star,+}_T + K_- \xi^{\star,-}_T\right] \\
\geq \inf_{\xi \in \cB} \varlimsup_{T \to +\infty} \frac{1}{T} \, \E_{x,y}\left[\int_0^T c(X_t^\xi, Y_t) \, dt + K_+ \xi^+_T + K_- \xi^-_T\right].
\end{multline}
\end{theorem}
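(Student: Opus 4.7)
The plan is the classical verification argument: apply a generalized Itô formula to $V(X^\xi_\cdot, Y_\cdot)$ for an arbitrary admissible $\xi$, use the three sign-conditions encoded in the PDE \eqref{eq:HJB} — namely $\cL_{(X^0,Y)} V + c \geq \lambda$, $V_x + K_+ \geq 0$ and $K_- - V_x \geq 0$ — to obtain a lower bound on the cost functional in terms of $\int_0^T \lambda(Y_s)\,ds$ modulo a martingale and a boundary term, and finally divide by $T$ and let $T \to \infty$. The growth condition $|V(x,y)| \leq \kappa(1+|x|)$ together with the admissibility condition \eqref{eq:controls:admissibility} will kill the boundary contribution $\E_{x,y}[V(X^\xi_T, Y_T)]/T$. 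By the stated structural hypotheses on $\xi^\star$ and the fact that $\cL_{(X^0,Y)}V + c - \lambda = 0$ on $\cI$, the three inequalities become equalities along the trajectory $(X^{\xi^\star}, Y)$, which will deliver \eqref{eq:verification:value_leq_equal}.

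\noindent \textbf{Lower bound.} Fix $\xi \in \cB$ and decompose $\xi^\pm = \xi^{\pm,c} + \xi^{\pm,d}$ into their continuous and purely discontinuous parts. Since $V$ is only $\bbW^{2,\infty}_{loc}$ globally (although $\dC^2$ on $\cI$ and $\dC^1$ everywhere by Sobolev embedding), I would justify the Itô expansion of $V(X^\xi_\cdot, Y_\cdot)$ via a mollification $V^\epsilon = V \ast \phi_\epsilon$, apply the classical Itô formula to $V^\epsilon$ up to a localization $T \wedge \tau_n$, and pass $\epsilon \to 0$, $n \to \infty$ using uniform $L^\infty_{loc}$-bounds on $D^2 V^\epsilon$, the occupation-time formula, and the moment bounds on $(X^\xi, Y)$ available under Assumption \ref{assumptions}. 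The jump term at a jump time $s$ of $\xi$ reads $\Delta V_s = \int_0^{\Delta\xi^+_s} V_x(X^\xi_{s-}+u, Y_s)\,du - \int_0^{\Delta\xi^-_s} V_x(X^\xi_{s-}-u, Y_s)\,du$; using the pointwise bounds $-K_+ \leq V_x \leq K_-$ (which extend everywhere by continuity of $V_x$) together with $\cL_{(X^0,Y)}V + c \geq \lambda$ a.e., I obtain after taking expectations
\begin{equation*}
\E_{x,y}\!\left[\int_0^T\!\! c(X^\xi_s, Y_s)\,ds + K_+ \xi^+_T + K_- \xi^-_T\right] \geq V(x,y) - \E_{x,y}[V(X^\xi_T, Y_T)] + \E_y\!\left[\int_0^T\!\!\lambda(Y_s)\,ds\right].
\end{equation*}
Dividing by $T$ and using the linear growth of $V$ together with \eqref{eq:controls:admissibility} to annihilate the boundary term, \eqref{eq:verification:value_geq_equal} follows after taking $\varlimsup_T$ and then the infimum over $\xi \in \cB$.

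\noindent \textbf{Upper bound.} The same computation applied to $\xi^\star$ turns each of the three inequalities into an equality. Indeed, since $(X^{\xi^\star}, Y) \in \overline{\cI}$ $d\P\otimes dt$-a.e.\ and Skorokhod reflection at a sufficiently regular free boundary makes the Lebesgue time spent on $\partial \cI$ negligible (so the trajectory is effectively in $\cI$ for $d\P\otimes dt$-a.e.\ $(s,\omega)$), the identity $\cL_{(X^0,Y)}V + c = \lambda$ holding on $\cI$ extends along the trajectory. Moreover, the structural hypothesis that $d\xi^{\star,\pm}$ charges only $\cA_\pm = \{V_x = \mp K_\pm\}$ collapses both the continuous-singular and the jump contributions of the control in the Itô expansion exactly to $-K_+ \xi^{\star,+}_T - K_- \xi^{\star,-}_T$. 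Admissibility of $\xi^\star \in \cB$ then kills the boundary term as before, yielding \eqref{eq:verification:value_leq_equal}.

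\noindent \textbf{Main obstacle.} The key technical hurdle is the rigorous Itô formula for a function $V \in \bbW^{2,\infty}_{loc}$ across the free boundary $\partial \cI$, in the presence of a singular control with both continuous-singular and jump parts: one must carefully pass to the limit in the mollification in both the Lebesgue integral (via occupation densities for the continuous semimartingale part of $(X^\xi, Y)$) and the jump sum (via dominated convergence, using the continuity of $V_x$). A secondary subtlety, relevant only for the upper bound, is the claim that the optimally reflected trajectory spends no positive Lebesgue time on $\partial \cI$; this is a standard property of Skorokhod reflection at sufficiently regular free boundaries, but must be checked case by case from the regularity of $\partial \cI$ in concrete applications.
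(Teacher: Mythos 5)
Your overall strategy — mollify $V$, apply Dynkin's formula to $V^m(X^\xi_{\cdot\wedge\tau_n},Y_{\cdot\wedge\tau_n})$, invoke the three sign-conditions from \eqref{eq:HJB}, then pass $m\to\infty$, $n\to\infty$, $T\to\infty$ and kill the boundary term via linear growth and \eqref{eq:controls:admissibility} — is exactly the paper's, as is the observation that the structural hypotheses on $\xi^\star$ collapse the inequalities to equalities for the upper bound.

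The step that would fail is the passage $\epsilon\to 0$ in the Lebesgue-integral term via \emph{occupation-time densities} for the continuous part of $(X^\xi,Y)$. This is the right tool in one spatial dimension, but in the $(d+1)$-dimensional (and possibly degenerate; see the filtering application) setting of the theorem the process has, in general, no occupation density against Lebesgue measure, so the a.e.\ convergence $D^2 V^m \to D^2 V$ cannot be integrated against the occupation measure. The paper avoids this entirely: since $V\in\bbW^{2,\infty}_{loc}$, mollification commutes with weak differentiation, so $\cL_{(X^0,Y)}V^m = (\cL_{(X^0,Y)}V)\ast\varphi_m + O(\epsilon_{m,K})$ uniformly on compacts (by continuity of the generator's coefficients); convolving the a.e.\ inequality $\cL_{(X^0,Y)}V + c - \lambda \geq 0$ then gives $(\cL_{(X^0,Y)}V)\ast\varphi_m + c^m - \lambda^m \geq 0$ \emph{pointwise}, which feeds directly into the Dynkin expansion without any recourse to occupation densities. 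You should replace your occupation-time argument with this convolution-commutation step; without it, the lower bound is not justified in $d\geq 1$.

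Your flagged subtlety about Lebesgue time spent on $\partial\cI$ for the upper bound is a legitimate point: the hypothesis places $(X^{\xi^\star},Y)$ in $\overline{\cI}$, yet the argument needs the PDE identity, which is only known on $\cI$ (or a.e.). The paper is somewhat terse here; your remark that this needs to be verified via the regularity of the free boundary in each application is consistent with how the paper actually handles it in Sections 3.1–3.2 and is a fair refinement rather than a gap.
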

\begin{proof}
We argue as in \cite[Chapter VIII, Theorem 4.1]{fleming_soner2003controlled} (see also \cite[Theorem 2.4]{bandini_gozzi2022optimal_dividend}).
For each $m \geq 1$, we consider the standard mollifier $\varphi_m(x,y)= m^{-(d+1)}\varphi(mx,my)$ with $\varphi \in \dC_c^\infty(B_1(0))$, $\varphi \geq 0$, $\int_{\R^{d+1}} \varphi(x,y) dxdy=1$, where $B_1(0)$ is the ball in $\R^{d+1}$ centered in zero with radius one, so that  $\varphi_m(x,y) \in \dC_c^\infty(B_{1/m}(0))$.
Then, we define $(V^m)_{m \geq 1} \subset C^{\infty}(\R^{d+1})$ by convolution as  $V^m := V \ast \varphi_m$.
Since $V \in \bbW^{2,\infty}_{{loc}}(\R \times \R^d)$, $V \in \dC^1(\R \times \R^d)$ by Sobolev embedding. Thus, for any compact set $K \subset \R \times \R^d$, it holds
\begin{equation}\label{filtered:verification_thm:approximation_first_order}
    \lim_{k \rightarrow \infty}||V^m -V||_{L^\infty(K)}=0, \quad \lim_{k \rightarrow \infty}||\dD(V^m -V)||_{L^\infty(K)}=0, \;\; \forall \, \dD \in \{ \partial_x, \, \partial_{y_i}, \, i=1,\dots,d\}.
\end{equation}
Since the second-order derivatives are not continuous over $\R \times \R^d$, we can not conclude that the second-order derivatives of $V^m$ converge to the corresponding second-order derivatives of $V$ uniformly on every compact subset of $\R \times \R^d$.
However, by using the definition of weak derivative and the fact that $V \in \bbW^{2,\infty}_{loc}(\R \times \R^d)$, it holds  $\dD(V^m) = (\dD V )\ast \varphi_m$ for any $\dD = \partial_{zu}$, with $z,u=x,y_1,\dots,y_d$.
Then, exploiting the continuity of the coefficients of $\cL_{(X^0,Y)}$ we deduce that
\begin{equation}\label{filtered:verification_thm:approximation_generator}
    \epsilon_{m,K} \coloneqq \sup_{(x,y) \in K }\vert \cL_{(X^0,Y)} V^m(x,y) - (\cL_{(X^0,Y)} V \ast \varphi_m ) (x,y) \vert \overset{m \to \infty}{\longrightarrow} 0,
\end{equation}
for any compact $K \subseteq \R \times \R^d$.
Let now $c^m \coloneqq c \ast \varphi_m$ and $\lambda^m \coloneqq \lambda \ast \varphi_m$ and notice that \eqref{eq:HJB} implies
\begin{equation}\label{eq:thm_verifica:var_ineq_convoluted}
    (\cL_{(X^0,Y)}V) \ast \varphi_m(x,y) + c^m(x,y) - \lambda^m(x,y) \geq 0 \quad \forall \, (x,y) \in \R \times \R^d.
\end{equation}
As $\lambda$ is a function of $y$ only, the convolution $\lambda \ast \varphi_m$ is actually just a function of $y$, as it can be directly verified from the definition of convolution.
By triangulating with $(\cL_{(X^0,Y)}V)\ast\varphi_m$, using  \eqref{filtered:verification_thm:approximation_generator} and \eqref{eq:thm_verifica:var_ineq_convoluted}, we find
\begin{equation}\label{eq:thm_verifica:final_inequality}
\begin{aligned}
\inf_{(x,y) \in K } & \Big( \cL_{(X^0,Y)} V^m (x,y) + c^m(x,y) - \lambda^m(y) \Big) \\
& \geq { \inf_{(x,y) \in K } \Big( \cL_{(X^0,Y)} V^m(x,y) - (\cL_{(X^0,Y)} V) \ast \varphi_m (x,y) \Big) } \\
& +  \inf_{(x,y) \in K } \Big( ( \cL_{(X^0,Y)} V ) \ast \varphi_m (x,y) + c^m(x,y) - \lambda^m(y) \Big)  \geq -\epsilon_{m,K}\overset{m \to \infty}{\longrightarrow} 0.    
\end{aligned}
\end{equation}
Let then $(x,y) \in \R\times\R^d$ and let $(K_n)_{n \geq 1}$ be a sequence of compact sets such that $K_{n} \subset K_{n+1}$ and $\lim_{n \to \infty} K_n = \R \times \R^d$.
Also, set $\tau_{n} := \inf\{t \geq 0: \,  (X^{\xi}_t,Y_t) \notin K_n \} ${, and denote by $(X^{\xi}_{\cdot \land \tau_{n}^-},Y_{\cdot \land \tau_{n}^-})$ the left-continuous version of the pair of processes $(X^\xi, Y)$ stopped at $\tau_n$.}
By applying Dynkin's formula to $(V^m(X_t,Y_t))_{t \geq 0}$, under $\P_{x,y}$ we obtain 
\begin{align*}
    \E_{x,y} [ &  V^m(X^{\xi}_{t \land {\tau_{n}^-}},Y_{t \land {\tau_{n}^-}})] - V^m(x,y) \\
    & = \E_{x,y}\bigg[\int_0^{t \land \tau_{n}}\cL_{(X^0,Y)}V^m(X^{\xi}_s,Y_s)ds + \int_{{[0, t \land \tau_{n})}} V^m_x(X^{\xi}_{s-},Y_{s-})d\xi_s \\
    & \quad + \sum_{{0 \leq s < t \land \tau_{n}} } \Big( V^m(X^{\xi}_{s},Y_{s}) - V^m(X^{\xi}_{s-},Y_{s-}) - V^m_x(X^{\xi}_{s-},Y_{s-})\Delta\xi_s \Big) \bigg] \\
    & = \E_{x,y}\bigg[\int_0^{t \land \tau_{n}}\cL_{(X^0,Y)}V^m(X^{\xi}_s,Y_s)ds + \int_0^{t\land\tau_{n}} V^m_x(X^{\xi}_{s-},Y_{s-})d\xi^{c}_s  \\
    & \quad + \sum_{{0 \leq s < t \land \tau_{n}} } \Big( V^m(X^{\xi}_{s},Y_{s}) - V^m(X^{\xi}_{s-},Y_{s-}) \Big) \bigg],
\end{align*}
where $\xi^{c}$ denotes the continuous part of $\xi$.
As $Y$ is continuous and $X^{\xi}_{s} = X^{\xi}_{s-} \pm \Delta\xi^\pm_s$, we deduce that, for any $s \geq 0$, it holds
\[
V^m(X^{\xi}_{s},Y_{s}) - V^m(X^{\xi}_{s-},Y_{s-}) = \int_0^{\Delta\xi^{+}_{s}}V^m_x(X^{\xi}_{s-}+z,Y_{s})dz - \int_0^{\Delta\xi^{-}_s}V^m_x(X^{\xi}_{s-}-z,Y_{s})dz,
\]
$\P_{x,y}$-a.s. Hence, 
\begin{equation}\label{eq:filtered:thm:optimal_control:equality_tau_n}
\begin{aligned}
     \E_{x,y} & [V^m(X^{\xi}_{t \land {\tau_{n}^-}},Y_{t \land {\tau_{n}^-}})] - V^m(x,y) \\
     & = \E_{x,y}\bigg[\int_0^{t \land \tau_{n}}\cL_{(X^0,Y)}V^m(X^{\xi}_s,Y_s)ds + \int_0^{t \land \tau_{n}} V^m_x(X^{\xi}_{s},Y_{s})d\xi^{c}_s  \\
     & + \sum_{{0 \leq s < t \land \tau_{n}} } \Big(\int_0^{\Delta\xi^{+}_{s}}V^m_x(X^{\xi}_{s-}+z,Y_{s})dz - \int_0^{\Delta\xi^{-}_s}V^m_x(X^{\xi}_{s-}-z,Y_{s})dz \Big) \bigg].
\end{aligned}
\end{equation}
Given that $(X^\xi_{s \land {\tau_{n}^-}},Y_{s \land {\tau_{n}^-}})_{s \geq 0}$ belongs to the compact $K_n$, $\P_{x,y}$-a.s., for $n$ large enough, we add and subtract $c^m(X^{\xi}_t,Y_t) - \lambda^m(Y_t)$ in the first integral in the right-hand side of \eqref{eq:filtered:thm:optimal_control:equality_tau_n}, invoke \eqref{eq:thm_verifica:final_inequality} and use the bounds on the partial derivative $V^m_x$ given by \eqref{eq:HJB}, to deduce
\begin{equation}\label{eq:filtered:thm:optimal_control:inequality_tau_nm}
\begin{aligned}
     \E_{x,y} & [V^m(X^{\xi}_{t \land {\tau_{n}^-}},Y_{t \land {\tau_{n}^-}})] - V^m(x,y) \\
     & \geq \E_{x,y}\bigg[\int_0^{t \land \tau_{n}} \big(\lambda^m(Y_s) - \epsilon_{m,K_n} - c^m(X^{\xi}_s,Y_s) \big)ds  \bigg] + \E_{x,y}\bigg[\int_0^{t \land \tau_{n}} V^m_x(X^{\xi}_{s},Y_{s})d\xi^{c}_s\bigg] \\
     & + \E_{x,y}\bigg[ { \sum_{{0 \leq s < t \land \tau_{n}}} \Big(\int_0^{\Delta\xi^{+}_{s}}V^m_x(X^{\xi}_{s-}+z,Y_{s})dz - \int_0^{\Delta\xi^{-}_s}V^m_x(X^{\xi}_{s-}-z,Y_{s})dz \Big) }\bigg].
\end{aligned}
\end{equation}
Now we aim at taking the limit as $m \uparrow \infty$.
Note that, given the continuity of $c$, we have $c^m(x,y) \to c(x,y)$ for any $x \in \R$. This implies $c^m(X^{\xi}_s,Y_s) \to c(X^{\xi}_s,Y_s)$ for every $ s \geq 0$, $\P_{x,y}$-a.s.
Moreover, as { $\lambda \in \dC(\R^d)$, we have $\lambda^m(y) \to \lambda(y)$ for any $y \in \R^d$, which implies $\lambda^m(Y_s) \to \lambda(Y_s)$ for every $s \geq 0$, $\P_{x,y}$-a.s. }
Finally, since the process $(X^{\xi}_{s \land {\tau_{n}^-}},Y_{s \land {\tau_{n}^-}})_{s \geq 0}$ belongs to the compact $K_n$, we can safely invoke \eqref{filtered:verification_thm:approximation_first_order} and the fact that $(\lambda^m)_{m \geq 1}$ are uniformly bounded on compact sets of $\R^d$ to deduce that as $m \uparrow \infty$:
\begin{equation}\label{eq:filtered:thm:optimal_control:inequality_tau_n}
\begin{aligned}
     \E_{x,y} & [V(X^{\xi}_{t \land {\tau_{n}^-}},Y_{t \land {\tau_{n}^-}})] - V(x,y) \\
     & \geq \E_{x,y}\bigg[\int_0^{t \land \tau_{n}} \big(\lambda(Y_s) - c(X^{\xi}_s,Y_s) \big)ds + \int_0^{t \land \tau_{n}} V_x(X^{\xi}_{s},Y_{s})d\xi^{c}_s \bigg]\\
     & \,\,\, + \E_{x,y}\bigg[ {\sum_{{0 \leq s < t \land \tau_{n}}} \Big(\int_0^{\Delta\xi^{+}_{s}}V_x(X^{\xi}_{s-}+z,Y_{s})dz - \int_0^{\Delta\xi^{-}_s}V_x(X^{\xi}_{s-}-z,Y_{s})dz \Big)} \bigg] \\
     & \geq \E_{x,y}\bigg[\int_0^{t \land \tau_{n}}\lambda(Y_s)ds\bigg] - \E_{x,y}\bigg[\int_0^{t \land \tau_{n}} c(X^{\xi}_s,Y_s)ds + K_+\xi^+_{t \land {\tau_{n}^-}} + K_-\xi^-_{t \land {\tau_{n}^-}}  \bigg]
\end{aligned}
\end{equation}
where in the last inequality we used the bounds on the partial derivatives of $V$ given by $\eqref{eq:HJB}$.
We conclude by taking the limit as $n \uparrow \infty$. In order to do that, we first notice that $\tau^n \to \infty$, $\P$-a.s. Upon observing that $\vert V(X^{\xi}_{t \land {\tau_{n}^-}},Y_{t \land {\tau_{n}^-}}) \vert \leq \kappa(1+\vert X^{\xi}_{t \land {\tau_{n}^-}} \vert )$, we can apply the dominated convergence theorem, which (after dividing by $t>0$) yields
\begin{multline*}
     \frac{1}{t}\E_{y}\left[\int_0^t \lambda(Y_s) ds \right] \leq  \frac{1}{t}\E_{x,y}\bigg[\int_0^{t} c(X^{\xi}_s,Y_s) ds +K_+ \xi^{+}_{t } + K_-\xi^{-}_{t} \bigg] \\
     + \frac{1}{t}\bigg(\E_{x,y} [V(X^{\xi}_{t},Y_{t})]  -  V(x,y) \bigg).
\end{multline*}
Given that $\lvert V(x,y) \rvert \leq \kappa(1+\vert x \vert)$ and that, for any admissible $\xi$, one has $\varlimsup_{t \to \infty}\frac{1}{t}\E_{x,y} [ \vert X^{\xi}_{t} \vert] =0$, we take limits as $t \uparrow \infty$ in the equation above and we deduce \eqref{eq:verification:value_geq_equal}.

To obtain the reverse inequality, consider the policy $\xi^\star \in \cB$ as in the statement of the Theorem.
As $V \in \dC^2(\cI)$ and $\cI$ is open, $\cL_{(X^0,Y)}V^m \to \cL_{(X^0,Y)}V$ pointwise on any compact $K \subseteq \cI$, with $\cL_{(X^0,Y)}V^m$ being uniformly bounded on $K$.
By construction, we have both that $\xi^\star$ only activates when $(X^{\xi^\star}_{s-},Y_{s-}) \in \cA_+ \cup \cA_-$ and that $(X^{\xi^\star}_{s\land {\tau_{n}^-}},Y_{s\land {\tau_{n}^-}})_{s \geq 0}$ takes values only in $K_n \cap \cI$.
Therefore, for $\xi = \xi^\star$, we can invoke the dominated convergence theorem in \eqref{eq:filtered:thm:optimal_control:equality_tau_n} to send $m \to \infty$, to get
\begin{equation*}
\begin{aligned}
     \E_{x,y} & [V(X^{\xi^\star}_{t \land {\tau_{n}^-}},Y_{t \land {\tau_{n}^-}})] - V(x,y) \\
     & = \E_{x,y}\bigg[\int_0^{t \land \tau_{n}}\cL_{(X^0,Y)}V(X^{\xi^\star}_s,Y_s)ds + \int_0^{t \land \tau_{n}} V_x(X_{s},Y_{s})d\xi^{\star,c}_s  \\
     & + \sum_{{0 \leq s < t \land \tau_{n}} } \Big(\int_0^{\Delta\xi^{\star,+}_{s}}V_x(X^{\xi^\star}_{s-}+z,Y_{s})dz - \int_0^{\Delta\xi^{\star,-}_s}V_x(X^{\xi^\star}_{s-}-z,Y_{s})dz \Big) \bigg] \\
     & = \E_{x,y}\bigg[\int_0^{t \land \tau_{n}}\lambda(Y_s)ds\bigg] - \E_{x,y}\bigg[\int_0^{t \land \tau_{n}} c(X^{\xi^\star}_s,Y_s)ds + K_+\xi^{\star,+}_{t \land {\tau_{n}^-}} + K_-\xi^{\star,-}_{t \land {\tau_{n}^-}}  \bigg].
\end{aligned}
\end{equation*}
{ Notice that in the last line we relied on the equality $\cL_{(X^0,Y)} V(x,y) + c(x,y)- \lambda(y) = 0 $ for any $(x,y) \in \cI$. 
Indeed, the equality holds true for a.e. $(x,y) \in \cI$ by \eqref{eq:general:equality_almost_everywhere}. Since, by assumption, $V \in \dC^2(\cI)$ and $\lambda \in \dC(\R^d)$, the right-hand side is continuous, and we conclude that the equality holds for any $(x,y) \in \cI$.}
By invoking again the dominated convergence theorem, we take first the limit with respect to $n \to \infty$ and for $t \to \infty$, to deduce 
\begin{multline}
\varlimsup_{T \to \infty}\frac{1}{T}\E_{y}\left[\int_0^T \lambda(Y_t)dt \right]  = \varlimsup_{T \to \infty}\frac{1}{T}\E_{x,y}\bigg[\int_0^{T} c(X^{\xi^\star}_s,Y_s) ds +K_+ \xi^{\star,+}_{T} + K_-\xi^{\star,-}_{T} \bigg] \\
{ \geq} \inf_{\xi \in \cB} \varlimsup_{T \to +\infty} \frac{1}{T} \, \E_{x,y}\left[\int_0^T c(X_t^\xi, Y_t) \, dt + K_+ \xi^+_T + K_- \xi^-_T\right],
\end{multline}
where the last inequality follows since $\xi^\star$ is admissible.
\end{proof}

\begin{remark}\label{rmk:ergodicity}
Setting $\lambda^\star(y) \coloneqq \varlimsup_{T \to \infty}\frac{1}{T}\E_y[\int_0^T \lambda(Y_t) dt]$, Theorem \ref{thm:verification} gives that
\[
\inf_{\xi \in \cB} \varlimsup_{T \to +\infty} \frac{1}{T} \, \E_{x,y}\left[\int_0^T c(X^{\xi}_t, Y_t) \, dt + K_+ \xi^+_T + K_- \xi^-_T\right] = \lambda^\star(y).
\]
This holds even in the case when $Y$ is not recurrent. 
In the case that $Y$ admits a stationary distribution $p^Y_\infty$ on $\R^d$, then
\[
\varlimsup_{T \to \infty}\frac{1}{T}\E_y\left[\int_0^T \lambda(Y_t) dt\right] = \int_{\R^d} \lambda(y) p_\infty^Y(dy) \eqqcolon \lambda^\star,
\]
and thus $\lambda^\star(y) \equiv \lambda^\star$ is the value of the ergodic control problem, independently of $y \in \R^d$.
We stress that this representation is not always true nor necessary, as $Y$ does not need to be ergodic for Theorem \ref{thm:verification} to hold true.
Indeed, provided that the pair $(V,\lambda)$ satisfies appropriate regularity and growth properties, the optimality of $\xi^\star$ and the representation of the optimal payoff in terms of a value profile $\lambda(y)$ follow just from analytical arguments applied to a solution $(V,\lambda)$ of equation \eqref{eq:HJB}.
\end{remark}

\begin{remark}\label{rmk:potential_function}
Even in the case that the pair $(X^{\xi},Y)$, $\xi \in \cB$, is an ergodic process, the PDE in \eqref{eq:HJB} is \emph{not} the dynamic programming equation for the ergodic control problem, which is instead given by
\begin{equation}\label{eq:PDE_potential_value}
\min\{\cL_{(X^0,Y)} \overline{V}(x,y) + c(x,y) - \lambda^\star, -\overline{V}_x(x,y) + K_-, \overline{V}_x(x,y) + K_+\}=0,\,\,(x,y) \in \R \times \R^d,
\end{equation}
where $\lambda^\star$ is the value of the problem.
{ As a consequence, the function $V$ in Theorem~\ref{thm:verification} is \emph{not} the potential function of the problem and $\lambda(y)$ is \textit{not} the value of the problem. In particular, being $\lambda(y)$ dependent on $y$, we can not deduce uniqueness of the value $\lambda^\star$ from a solution to the partial differential equation \eqref{eq:HJB}, so that the value of the control problem \eqref{eq:ctrl_pb} is expected to be dependent on the initial position $y$.
Nevertheless, if the process $Y$ admits a stationary distribution, Theorem \ref{thm:verification} provides a connection between any value profile $\lambda(y)$ and the value of the problem $\lambda^\star$, as noticed in Remark \ref{rmk:ergodicity}.} 
\end{remark}

\begin{remark}
We notice that the admissibility condition \eqref{eq:controls:admissibility} is not new in literature (see, e.g., \cite[Equation (38)]{jack2006ergodic}, \cite[Equation (28)]{lokka2013long}). In these works, the authors prove that any control whose cost functional is finite should satisfy \eqref{eq:controls:admissibility}, by relying on some explicit growth assumptions on the instantaneous cost.
At our level of generality this is not feasible, since our instantaneous cost depends both on the controlled process and on the factor process (which is not required to satisfy any integrability assumption).
The price to pay is to restrict to those strategies that satisfy the growth condition  \eqref{eq:controls:admissibility}.
Alternatively, one could be tempted to consider as admissible those strategies such that $\varlimsup_{T \to \infty}\frac{1}{T}\E[\vert V(X^\xi_T,Y_T) \vert ] = 0$, where $(V,\lambda)$ is a solution to \eqref{eq:HJB}. This would be consistent with other works in literature (see, e.g., \cite[{ Chapter VIII, Equation (4.1)}]{fleming_soner2003controlled} for the discounted setting).
Nevertheless, as in general the solution $(V,\lambda)$ of \eqref{eq:HJB} is not unique (see Theorems \ref{thm:HJB_sol} and \ref{thm:HJB_sol_weak}), this would make the class of controls dependent of the particular pair $(V,\lambda)$.
Therefore, we opt for the admissibility condition \eqref{eq:controls:admissibility}.
\end{remark}


\subsection{The connection to a Dynkin game}
\label{sec:Dynkinconn}

In order to identify the pair $(V,\lambda)$ in Theorem~\ref{thm:verification}, we associate an auxiliary Dynkin game to the original singular stochastic control problem \eqref{eq:ctrl_pb}.
Let $(\widehat{X}, \widehat{Y}) = (\widehat{X}_t, \widehat{Y}_t)_{t \geq 0}$ be given by
\begin{equation}\label{eq:X_hat_0}
    \left\{ \begin{aligned}
    d\widehat{X}_t & = \big(b(\widehat{X}_t, \widehat{Y}_t) + \sigma\sigma_x(\widehat{X}_t, \widehat{Y}_t)\big)dt + \sigma(\widehat{X}_t, \widehat{Y}_t) d W_t , && \widehat{X}_{0} = x \in \R, \\
    d\widehat{Y}_t & = \big(\eta(\widehat{Y}_t) + \sigma_x(\widehat{X}_t, \widehat{Y}_t)(\rho\zeta)(\widehat{Y}_t)\big)dt + \zeta(\widehat{Y}_t)dB_t, && \widehat{Y}_0 = y \in \R^d,
    \end{aligned}  \right.
\end{equation}
whose generator is given by
\begin{multline}\label{eq:generator:X_hat_0}
    \cL_{(\widehat{X},\widehat{Y})}f(x,y) = \big( b(x,y) + (\sigma\sigma_x)(x,y) \big)f_x(x,y) + \frac{1}{2}\sigma^2(x,y)f_{xx}(x,y) \\
     + \big(\eta(y) + \sigma_x(x,y) (\rho \zeta) (y) \big)\nabla_y f(x,y) + \frac{1}{2}\tr\big( a(y)\nabla^2_y f(x,y) \big) + \sigma(x,y)(\rho\zeta)(y) \nabla_y f_{x}(x,y)
\end{multline}
for any $f \in \dC^2(\R \times \R^d)$.
Note that, under Assumptions \ref{assumptions}, $(\widehat{X}^{x,y},\widehat{Y}^{x,y})$ is a strong Markov process.
When needed, we stress the dependence of $(\widehat{X},\widehat{Y})$ { on} the initial point $(x,y) \in \R \times \R^d$ by writing $(\widehat{X}^{x,y},\widehat{Y}^{x,y})$.
With a slight abuse of notation, we write $\E_{x,y}[\cdot]$ to denote the conditional expectation given $\widehat{X}_0 = x$ and $\widehat{Y}_0 = y$.
We introduce the auxiliary Dynkin game through the (upper) value function
\begin{multline}\label{eq:dynkin}
U(x,y) \coloneqq \inf_{\theta \geq 0} \sup_ {\tau \geq 0} \E_{x,y}\left[\int_0^{\tau \land \theta} \exp\left\{\int_0^t b_x(\widehat{X}_s,\widehat{Y}_s) \, ds\right\} c_x(\widehat{X}_t,\widehat{Y}_t) \, dt \right. \\
\left. - \ind_{\tau < \theta} \exp\left\{\int_0^\tau b_x(\widehat{X}_s,\widehat{Y}_s) \, ds\right\} K_+ + \ind_{\theta < \tau} \exp\left\{\int_0^\theta b_x(\widehat{X}_s,\widehat{Y}_s) \, ds\right\} K_- \right],
\end{multline}
where $\tau$, $\theta$ are stopping times of filtration generated by the Brownian motions.
The analysis required to study the Dynkin game \eqref{eq:dynkin} is typically strongly dependent on the structure of the underlying Markov process $(\widehat{X},\widehat{Y})$ and of the instantaneous cost $c_x(x,y)$.
Therefore, at this stage, we limit ourselves to assume that the value function $U(x,y)$ satisfies the following properties, from which we are going to characterize an optimal control for the ergodic problem \eqref{eq:ctrl_pb}.
\begin{hypothesis}\label{conj:main}
There exist two measurable functions $a_+, a_- \colon \R^d \to \R$ such that:
\begin{enumerate}[label=(\Roman*),wide]
    \item  $\sup_{y \in \R^d}a_+(y) < \inf_{y \in \R^d}a_-(y)$.
    \item It holds
    \begin{align}
        c_x(x,y) + K_- b_x(x,y) \geq 0, \quad \forall (x,y) \text{ s.t. } x \geq a_-(y), \label{eq:main:sign_S_-} \\
        c_x(x,y) -K_+ b_x(x,y) \leq 0, \quad \forall (x,y) \text{ s.t. } x \leq a_+(y). \label{eq:main:sign_S_+}
    \end{align}
    \item Setting $\cC \coloneqq \{ (x,y) \in \R \times \R^d: \, a_+(y) < x < a_-(y) \}$, $U \in \dC^2(\cC) \cap \dC(\R \times \R^d)$ and it solves the free-boundary problem
    \begin{equation}\label{eq:U_var_ineq}
    \left\{ \begin{aligned}
    \cL_{(\widehat{X},\widehat{Y})}U(x,y) + c_x(x,y) + b_x(x,y)U(x,y) & = 0, && \text{if } a_+(y) < x < a_-(y), \\
    U(x,y) & = -K_+, && \text{if } x \leq a_+(y), \\
    U(x,y) & = K_-, && \text{if } x \geq a_-(y).
    \end{aligned} \right.
    \end{equation}
    for all $(x,y) \in \R \times \R^d$.
\end{enumerate}
\end{hypothesis}
Hypothesis \ref{conj:main} should be read in the following way:
the Dynkin game \eqref{eq:dynkin} should have a saddle point $(\tau^*,\theta^*)$, { given by the entry times in the sets $\cS_+ \coloneqq \{(x,y) \in \R \times \R^d: \, x \leq a_+(y)\}$ and $\cS_- \coloneqq \{(x,y) \in \R \times \R^d: \, x \geq a_-(y)\}$ respectively}, so that the functions $a_\pm$ are expected to be the boundaries of the continuation and stopping regions.
If one could rely on the semi-harmonic characterization of the Dynkin game's value function \eqref{eq:U_var_ineq}, property (II) would follow.
{ We notice that property (II) is not uncommon in ergodic singular control problems. In particular, in one-dimensional ergodic control problems, similar conditions are imposed on the problem data to ensure that the optimal policy is of barrier type (see, e.g., \cite[Equation (20)]{lokka2013long}, \cite[Assumption 2.7(ii)]{liang2020ergodic}, \cite[Assumption 5(ii)]{cao2023stationary}).}
Finally, property (III) would tell that $U$ is a classical solution of the pointwise free-boundary problem \eqref{eq:U_var_ineq}.
In following Section \ref{sec:application} we will provide two examples in which Hypothesis \ref{conj:main} is satisfied.

\smallskip
We can heuristically derive the differential problem \eqref{eq:U_var_ineq} for $U$ starting from a pseudo-potential function $V$, in the following way:
Suppose that we are given a pair $(V,\lambda)$ solution to \eqref{eq:HJB} and that there exist two measurable functions $a_\pm:\R^d \to \R$ so that the set $\cI$ in \eqref{eq:HJB:waiting_exert} can be expressed as $\{(x,y): \, a_+(y) < x < a_-(y)\}$.
If $V$ is regular enough, then $V_x(x,y)$ satisfies \eqref{eq:U_var_ineq} with $\cC = \cI$ and $\cS_\pm = \cA_\pm$.
The state constraint in the inaction regions $\cA_+$ and $\cA_-$ are straightforward to see; as for the behavior of $V_x$ in the action regions $\cI$, it is enough to take the $x$ derivative of the term $\cL_{(X^0,Y)}V + c { - \lambda} = 0$ to see that $V_x$ should satisfy the PDE in \eqref{eq:U_var_ineq}.
This justifies both the presence of the new discount term $b_x(x,y)$ in \eqref{eq:dynkin} and \eqref{eq:U_var_ineq} and the disappearance of the value profile $\lambda(y)$.

In the following Theorems \ref{thm:HJB_sol} and \ref{thm:HJB_sol_weak}, we revert this reasoning: we rigorously build a solution $(V,\lambda)$ to \eqref{eq:HJB} starting from the value function $U$ of the Dynkin game \eqref{eq:dynkin}, provided that Hypothesis \ref{conj:main} holds true and $U$ enjoys some additional regularity.

\begin{theorem}\label{thm:HJB_sol}
Let $U$ be the value function of the auxiliary Dynkin game \eqref{eq:dynkin}, and suppose that Hypothesis \ref{conj:main} holds true.
Moreover, suppose that $U \in \bbW^{2,\infty}_{loc}(\R \times \R^d)$.
Let $\alpha \in (\sup_y a_+(y), \inf_y a_-(y))$ and set
\begin{align}
V(x,y) & \coloneqq \int_\alpha^x U(x',y) dx', \label{eq:main:built_potential}  \\
\lambda(y) & \coloneqq c(\alpha,y) + U(\alpha,y) b(\alpha,y) + \frac{1}{2}\sigma^2(\alpha,y) U_{x}(\alpha,y) + \sigma(\alpha,y)(\rho\zeta)(y)U_y(\alpha,y). \label{eq:main:built_value}
\end{align}
Then, { $(V,\lambda)$ is a solution to~\eqref{eq:HJB}.}
In addition, suppose that $V \in \dC^2(\cC)$, $(\lambda(Y_t))_{t \in [0,T]}$ is $d\P \otimes dt$ integrable for any $T > 0$, and that there exists $\xi^\star\in \cB$ such that, for almost all $t \geq 0$,
\begin{multline}\label{eq:main:optimal_control}
(X_t^{\xi^\star}, Y_t) \in \{(x,y) \colon a_+(y) \leq x \leq a_-(y)\}, \\
\xi^{\star, +}_t = \int_{[0,t]} \ind_{X_s^{\xi^\star} \leq a_+(Y_s) } \, d\xi^{\star, +}_s, \quad \xi^{\star, -}_t = \int_{[0,t]} \ind_{X_s^{\xi^\star} \geq a_-(Y_s)} \, d\xi^{\star, -}_s,
\end{multline}
$\P_{x,y}$-a.s. { and $\P_{x,y}((X^{\xi^\star}_t,Y_t) \in \cC) = 1$ for any $t \geq 0$}. Then, $\xi^\star$ is optimal and it holds 
\begin{equation}\label{eq:HJB_sol:value_representation}
\varlimsup_{T \to \infty}\frac{1}{T}\E_{y}\left[\int_0^T \lambda(Y_t)dt \right] = \inf_{\xi \in \cB} \varlimsup_{T \to +\infty} \frac{1}{T} \, \E_{x,y}\left[\int_0^T c(X_t^\xi, Y_t) \, dt + K_+ \xi^+_T + K_- \xi^-_T\right].
\end{equation}
\end{theorem}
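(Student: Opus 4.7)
The plan is to verify the two claims of the theorem sequentially, using the auxiliary Dynkin game value $U$ as the building block of the pseudo-potential $V$ and then invoking Theorem \ref{thm:verification}.

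First I would show that the pair $(V,\lambda)$ defined in \eqref{eq:main:built_potential}--\eqref{eq:main:built_value} is a solution to \eqref{eq:HJB}. Since $V_x(x,y) = U(x,y)$ by construction, Hypothesis \ref{conj:main}(III) together with the bound $-K_+ \leq U \leq K_-$ built into the Dynkin game value immediately yields the gradient constraints in \eqref{eq:HJB}, with equalities holding precisely on $\cS_\pm$; hence $\cA_\pm = \cS_\pm$. The regularity $V \in \bbW^{2,\infty}_{loc}(\R\times\R^d)$ is inherited from $U$, because the $x$-derivatives of $V$ are (weak) derivatives of $U$, while $\partial_{y_i} V(x,y) = \int_\alpha^x U_{y_i}(x',y)\,dx'$ and the analogous formula for $\partial_{y_i y_j} V$ are locally bounded.

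Inside $\cC$, I would compute $\cL_{(X^0,Y)} V$ using these identities together with $V_{xy_i} = U_{y_i}$, and observe that $\partial_x\bigl[\cL_{(X^0,Y)} V + c\bigr]$ collapses to $\cL_{(\widehat{X}^0,\widehat{Y})} U + b_x U + c_x$, which vanishes by the PDE in \eqref{eq:U_var_ineq}. Therefore $\cL_{(X^0,Y)} V + c$ depends only on $y$ in $\cC$; evaluating at $x = \alpha$, which lies in $\cC$ for every $y$ by Hypothesis \ref{conj:main}(I), and using that $V(\alpha,y) = 0$, $\nabla_y V(\alpha,y) = 0$ and $\nabla_y^2 V(\alpha,y) = 0$, identifies that constant with the $\lambda(y)$ of \eqref{eq:main:built_value}.

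On the action regions I would use Hypothesis \ref{conj:main}(II). In $\cA_-$ one has $V_x = K_-$, so the mixed $x$-$y$ derivatives and $V_{xx}$ vanish, and $\nabla_y V$, $\nabla_y^2 V$ depend only on $y$; differentiating $\cL_{(X^0,Y)} V + c - \lambda$ in $x$ leaves $b_x(x,y) K_- + c_x(x,y) \geq 0$ by \eqref{eq:main:sign_S_-}. Since $\cL_{(X^0,Y)} V + c - \lambda$ vanishes on the free boundary $x = a_-(y)$ by continuity from $\cC$, integrating in $x$ proves the required inequality on $\cA_-$; the argument in $\cA_+$ is symmetric and uses \eqref{eq:main:sign_S_+}.

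The second claim of the theorem is then a direct application of Theorem \ref{thm:verification}. The growth bound $|V(x,y)| \leq \max(K_+,K_-)|x-\alpha|$ follows from $|U| \leq \max(K_+, K_-)$; the $\dC^2(\cI)$ regularity of $V$, the admissibility of $\xi^\star$, and the integrability of $(\lambda(Y_t))_t$ are all hypothesized; and the Skorokhod-type conditions on $\xi^\star$ translate directly to those of Theorem \ref{thm:verification} via $\cA_\pm = \cS_\pm$, yielding \eqref{eq:HJB_sol:value_representation}. The main technical subtlety is the justification of the variational inequality across the free boundaries, where $a_\pm$ are only measurable: the derivative computations for $V$ on $\cA_\pm$ must be read in the weak Sobolev sense, which is exactly what the $\bbW^{2,\infty}_{loc}$ assumption on $U$ underwrites.
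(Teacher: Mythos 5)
Your proposal is correct and follows essentially the same route as the paper: integrate $U$ to build $V$, identify $\cA_\pm=\cS_\pm$ from $V_x=U$, show that $\partial_x\bigl[\cL_{(X^0,Y)}V+c\bigr]$ collapses to $\cL_{(\widehat X^0,\widehat Y)}U+b_x U+c_x$, control the sign region by region via Hypothesis \ref{conj:main}, and invoke Theorem \ref{thm:verification}. The only difference is packaging: the paper writes $\cL_{(X^0,Y)}V+c-\lambda$ directly as the integral $I(x,y)=\int_{a_+(y)}^x\bigl(\cL_{(\widehat X^0,\widehat Y)}U+b_xU+c_x\bigr)\,dz$ and reads off its sign, whereas you differentiate in $x$ inside each region and integrate back using continuity across the free boundary; the two are equivalent by absolute continuity in $x$, which $U\in\bbW^{2,\infty}_{loc}$ underwrites, though the paper's integral representation makes that continuity step explicit rather than asserted.
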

\begin{proof}
We notice that, by choosing either $\theta = 0$ or $\tau = 0$ in the functional appearing in~\eqref{eq:dynkin}, we have $-K_+ \leq U(x,y) \leq K_-$ for all $(x,y) \in \R \times \R^d$, so that $V$ has at most linear growth in $x$ uniformly in $y$.
We notice that $V \in \bbW^{2,\infty}_{loc}(\R \times \R^d)$ as so does $U$.
By construction, { $V_x(x,y) = U(x,y)$, $V_{xx}(x,y) = U_{x}(x,y)$, $V_{x y_i}(x,y) = U_{y_i}(x,y)$, $V_{y_i}(x,y) = \int_{\alpha}^x U_{y_i}(x',y)dx'$ and $V_{y_i y_j}(x,y) = \int_{\alpha}^x U_{y_i y_j}(x',y)dx'$ for a.e. $(x,y) \in \R \times \R^d$, where derivatives are to be understood in the weak sense}.
Fix now $(x,y) \in \R \times \R^d$.
{ As every term admits at least a weak derivative, Lebesgue differentiation theorem yield}
\begin{align*}
& \cL_{(X^0,Y)} V(x,y) + c(x,y) \\
& = \frac{1}{2} \sigma^2(x,y) U_x(x,y) + b(x,y) U(x,y) + \sigma(x,y)(\rho \zeta)(y)\nabla_{y}U(x,y)  \\
& \;\;\; + \eta(y)\nabla_y V(x,y) + \frac{1}{2}\tr\big( a(y) \nabla^2_y V(x,y) \big) + c(x,y) \\
& \, = \int_{\alpha}^x  \dfrac{\partial}{\partial z} \left[\frac{1}{2} \sigma^2(z,y) U_x(z,y) + b(z,y) U(z,y) + \sigma(z,y)(\rho \zeta)(y)\nabla_{y}U(z,y) + c(z,y)   \right] dz \\
& \;\;\; + \int_{\alpha}^x \left( \eta(y)\nabla_{y} U(z,y) + \frac{1}{2}\tr\big( a(y) \nabla^2_y U(z,y)\big) \right)dz \\
& \;\;\; + U(\alpha,y) b(\alpha,y) + \frac{1}{2}\sigma^2(\alpha,y) U_{x}(\alpha,y) + \sigma(\alpha,y)(\rho\zeta)(y)\nabla_{y} U(\alpha,y) + c(\alpha,y) \\
& \, = \int_{\alpha}^x  \bigg(\cL_{(\widehat{X},\widehat{Y})}U(z,y) + b_x(z,y)U(z,y) + c_x(z,y) \bigg) dz + \lambda(y),
\end{align*}
where in the last line we used the definition of $\lambda(y)$ in \eqref{eq:main:built_value}.
Thus, we get
\begin{equation}\label{eq:thm:HJB_sol:differential_identity}
\begin{aligned}
\cL_{(X^0,Y)} & V(x,y) + c(x,y) - \lambda(y) \\
& = - \int_{a_+(y)}^{\alpha} \left(\cL_{(\widehat{X},\widehat{Y})}U(z,y) + b_x(z,y)U(z,y) + c_x(z,y)\right) dz\\
& \; + \underbrace{\int_{a_+(y)}^x \left(\cL_{(\widehat{X},\widehat{Y})}U(z,y) + b_x(z,y)U(z,y) + c_x(z,y)\right) dz}_{\eqqcolon I(x,y)} = I(x,y),
\end{aligned}
\end{equation}
as the first integral is equal to $0$ for any $y \in \R^d$, using \eqref{eq:U_var_ineq} upon noticing that $[a_+(y) , \alpha] \subseteq [a_+(y), a_-(y)]$ for { any} $y \in \R^d$.
By the same reasoning,  $I(x,y) = 0$ for all $(x,y)$ such that $a_+(y) \leq x \leq a_-(y)$.
Next, let us consider $x \geq a_-(y)$.
As $U(x,y) \equiv K_-$ for any $x \geq a_-(y)$, \eqref{eq:main:sign_S_-} implies that $U(x,y)$ verifies
\begin{equation}\label{eq:subm}
\cL_{(\widehat{X},\widehat{Y})}U(x,y) + c_x(x,y) + b_x(x,y)U(x,y) \geq 0, \quad \text{for all } (x,y) \in \R \times \R^d \text{ s.t. } x \geq a_-(y).
\end{equation}
Then, equation \eqref{eq:subm} implies that $I(x,y) \geq 0$, if { $x \geq a_-(y)$}.
Analogously, if $x \leq a_+(y)$, then
\eqref{eq:main:sign_S_+} implies that $U(x,y)$ verifies
\begin{equation}\label{eq:supm}
\cL_{(\widehat{X},\widehat{Y})}U(x,y) + c_x(x,y) + b_x(x,y)U(x,y) \leq 0, \quad \text{for all } (x,y) \in \R \times \R^d \text{ s.t. } x \leq a_+(y).
\end{equation}
Equation \eqref{eq:supm} then entails that $I(x,y) \geq 0$, if $x \leq a_+(y)$.
Therefore,
\begin{equation*}
\cL_{(X^0,Y)} V(x,y) + c(x,y) - \lambda(y) \geq 0, \quad \text{for all } (x,y) \in \R \times \R^d.
\end{equation*}
As $V_x(x,y) = U(x,y)$, \eqref{eq:U_var_ineq} gives that the derivative constraints
\begin{equation*}
V_x(x,y) + K_+ \geq 0, \quad K_- - V_x(x,y) \geq 0, \quad \text{for all } (x,y) \in \R \times \R^d,
\end{equation*}
are satisfied. Hence, $(V,\lambda)$ solves \eqref{eq:HJB}.
Moreover, we notice that the sets $\cI$, $\cA_+$ and $\cA_-$ defined by \eqref{eq:HJB:waiting_exert} are given by $\cC$, $\cS_+$ and $\cS_-$ respectively.
To conclude, we note that the assumptions of Theorem \ref{thm:verification} are verified.
By assumption, $V \in \dC^2(\cC)$, $(\lambda(Y_t))_{0 \leq t \leq T}$ is $d\P\otimes dt$ integrable, $\xi^\star$ is admissible.
Moreover, as already noticed, $V(x,y)$ has at most linear growth in $x$ uniformly in $y$ and belongs to $\bbW^{2,\infty}_{loc}(\R \times \R^d)$.
Finally, $y \mapsto \lambda(y)$ is continuous, as $U \in \dC^1(\R \times \R^d)$ by Sobolev embedding, and so it is locally bounded.
Thus, Theorem \ref{thm:verification} yields the optimality of the control $\xi^\star$ and the representation \eqref{eq:HJB_sol:value_representation}.
\end{proof}

\begin{remark}\label{rmk:why_no_potential}
The reader may wonder why this argument does not work directly for a pair $(\overline{V},\lambda^\star)$ solution to \eqref{eq:PDE_potential_value}, since the derivative $\overline{V}_x$ of the potential function is expected to satisfy the free-boundary problem \eqref{eq:U_var_ineq} as well.
The reason is given by the proof of Theorem \ref{thm:HJB_sol} itself, as relating $\cL_{(X^0,Y)}V + c$ and $\cL_{(\widehat{X},\widehat{Y})}U +b_x U + c_x$ leaves us with a reminder term $\lambda(y)$, dependent on $y \in \R^d$, instead of a constant $\lambda \in \R$, which implies that the couple $(V(x,y),\lambda(y))$ (with $V$ being given by \eqref{eq:main:built_potential}) satisfies the auxiliary \eqref{eq:HJB} rather than the dynamic programming equation \eqref{eq:PDE_potential_value}.
\end{remark}

In many interesting cases, it is not possible to verify $U \in \bbW^{2,\infty}_{loc}(\R \times \R^d)$.
Therefore, the theorem above is not directly applicable.
Nevertheless, we can state a weaker version of Theorem \ref{thm:HJB_sol}, which bypasses the need of having locally bounded second order derivatives.
\begin{theorem}\label{thm:HJB_sol_weak}
Let $U$ be the value function of the auxiliary Dynkin game \eqref{eq:dynkin}, and suppose that Hypothesis~\ref{conj:main} holds true.
Moreover, suppose that $U$ is so that $\cL_{(\widehat{X},\widehat{Y})} U \in L^\infty_{loc}(\R \times \R^d)$.
Let $\alpha \in (\sup_y a_+(y), \inf_y a_-(y))$ and set
\begin{align}
V(x,y) & \coloneqq \int_\alpha^x U(x',y) dx', \label{eq:main:built_potential_weak}\\
\lambda(y) & \coloneqq c(\alpha,y) + U(\alpha,y) b(\alpha,y) + \frac{1}{2}\sigma^2(\alpha,y) U_{x}(\alpha,y) + \sigma(\alpha,y)(\rho\zeta)(y)U_y(\alpha,y). \label{eq:main:built_value_weak}
\end{align}
Suppose that $V \in \bbW^{2,\infty}_{loc}(\R \times \R^d)$, $\lambda \in L^\infty_{loc}(\R^d)$ and that it holds
\begin{multline}\label{thm:HJB_sol_weak:differential_identity}
    \cL_{(X^0,Y)} V(x,y) + c(x,y) { - } \lambda(y) \\
    = \int_{a_+(y)}^x \Big(\cL_{(\widehat{X},\widehat{Y})} U(x',y) + b_x(x',y) U(x',y) + c_x(x',y) \Big)dx'.
\end{multline}
Then, $(V, \lambda)$ is a solution to ~\eqref{eq:HJB}.
In addition, suppose that $V \in \dC^2(\cC)$, { $\lambda \in \dC(\R^d)$}, $(\lambda(Y_t))_{t \in [0,T]}$ is $d\P \otimes dt$ integrable for any $T > 0$, and that there exists $\xi^\star \in \cB$ such that, for almost all $t \geq 0$,
\begin{multline}\label{eq:thm:HJB_sol_weak:optimal_ctrl}
(X_t^{\xi^\star}, Y_t) \in \{(x,y) \colon a_+(y) \leq x \leq a_-(y) \}, \\
\xi^{\star, +}_t = \int_{[0,t]} \ind_{X_s^{\xi^\star} \leq a_+(Y_s) } \, d\xi^{\star, +}_s, \quad \xi^{\star, -}_t = \int_{[0,t]} \ind_{X_s^{\xi^\star} \geq a_-(Y_s)} \, d\xi^{\star, -}_s,
\end{multline}
$\P_{x,y}$-a.s. { and $\P_{x,y}((X^{\xi^\star}_t,Y_t) \in \cC) = 1$ for any $t \geq 0$}. Then, $\xi^\star$ is optimal and it holds 
\begin{equation}\label{eq:HJB_sol_weak:value_representation}
\varlimsup_{T \to \infty}\frac{1}{T}\E_{y}\left[\int_0^T \lambda(Y_t)dt \right] = \inf_{\xi \in \cB} \varlimsup_{T \to +\infty} \frac{1}{T} \, \E_{x,y}\left[\int_0^T c(X_t^\xi, Y_t) \, dt + K_+ \xi^+_T + K_- \xi^-_T\right].
\end{equation}
\end{theorem}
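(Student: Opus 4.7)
The plan is to follow the proof of Theorem~\ref{thm:HJB_sol} closely, but to bypass the second-order derivative computations (which there relied on $U \in \bbW^{2,\infty}_{loc}(\R\times\R^d)$) by invoking the assumed identity \eqref{thm:HJB_sol_weak:differential_identity} directly. First I would observe that $V$ inherits at most linear growth in $x$ uniformly in $y$ from the bounds $-K_+ \leq U \leq K_-$, which follow by choosing $\tau = 0$ or $\theta = 0$ in the game \eqref{eq:dynkin}. Moreover, $V_x(x,y) = U(x,y)$ by Lebesgue differentiation, so the gradient constraints $V_x + K_+ \geq 0$ and $K_- - V_x \geq 0$ in \eqref{eq:HJB} are automatic.

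Next, I would verify $\cL_{(X^0,Y)} V(x,y) + c(x,y) - \lambda(y) \geq 0$ for a.e.\ $(x,y)$ by using \eqref{thm:HJB_sol_weak:differential_identity} and analyzing the sign of the integrand $\cL_{(\widehat{X}^0,\widehat{Y})} U(z,y) + b_x(z,y) U(z,y) + c_x(z,y)$ according to whether $(z,y)$ lies in $\cC$, in the interior of $\cS_-$, or in the interior of $\cS_+$. On $\cC$ the integrand vanishes by Hypothesis~\ref{conj:main}(III). On $\{z > a_-(y)\}$, $U \equiv K_-$ is constant, so $\cL_{(\widehat{X}^0,\widehat{Y})} U = 0$ (weakly, and consistently with the assumption $\cL_{(\widehat{X}^0,\widehat{Y})} U \in L^{\infty}_{loc}$), and by \eqref{eq:main:sign_S_-} the integrand equals $c_x + K_- b_x \geq 0$. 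Symmetrically, on $\{z < a_+(y)\}$ one has $U \equiv -K_+$ and by \eqref{eq:main:sign_S_+} the integrand equals $c_x - K_+ b_x \leq 0$.

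Combining these cases yields the desired inequality. For $x \geq a_-(y)$, splitting $\int_{a_+(y)}^x = \int_{a_+(y)}^{a_-(y)} + \int_{a_-(y)}^x$ produces $0$ plus a nonnegative term. For $x \in [a_+(y), a_-(y)]$ the integral vanishes, giving equality. For $x \leq a_+(y)$, reversing orientation gives $\int_{a_+(y)}^x = -\int_{x}^{a_+(y)}(\text{nonpositive}) \geq 0$. Hence $(V,\lambda)$ solves \eqref{eq:HJB} a.e., with $\cI = \cC$ and $\cA_\pm = \cS_\pm$; equality holds on $\cC$, which matches the equation in the continuation region.

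With the variational inequality established, optimality of $\xi^\star$ and the value representation \eqref{eq:HJB_sol_weak:value_representation} follow by applying Theorem~\ref{thm:verification}: the hypotheses there are met because $V \in \dC^2(\cC)$ is assumed, $V \in \bbW^{2,\infty}_{loc}$ has at most linear growth in $x$ uniformly in $y$, $(\lambda(Y_t))_{t \in [0,T]}$ is $d\P \otimes dt$-integrable, and $\xi^\star$ is admissible and acts only on $\cA_\pm$. The main subtlety lies in the sign analysis on $\cS_+$: since there $x < a_+(y)$, the direction of integration is reversed, and one has to track this carefully to conclude that the integral is nonnegative even though the integrand is nonpositive. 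Beyond this bookkeeping, the argument is a streamlined version of the proof of Theorem~\ref{thm:HJB_sol}, in which the integral identity \eqref{thm:HJB_sol_weak:differential_identity} is postulated rather than derived.
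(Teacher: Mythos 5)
Your proposal is correct and follows exactly the route the paper intends: it reproduces the sign analysis from the proof of Theorem~\ref{thm:HJB_sol}, with the key simplification that the integral identity \eqref{thm:HJB_sol_weak:differential_identity} is taken as a hypothesis rather than derived from the second-order regularity of $U$, after which Theorem~\ref{thm:verification} applies verbatim. In fact, the paper's own proof of this statement consists of the single remark that the argument of Theorem~\ref{thm:HJB_sol} carries over once one observes that the right-hand side of \eqref{thm:HJB_sol_weak:differential_identity} coincides with the term $I(x,y)$ there, so your reconstruction matches it precisely.
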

The proof is completely analogous to the proof of Theorem \ref{thm:HJB_sol}, once noticed that the { right-hand} side of \eqref{thm:HJB_sol_weak:differential_identity} is exactly the term $I(x,y)$ defined in \eqref{eq:thm:HJB_sol:differential_identity}, and it is therefore omitted.
We just notice that, in the lower regularity framework of Theorem \ref{thm:HJB_sol_weak}, we cannot obtain the regularity of $V$ from the one of $U$, so that equation \eqref{eq:thm:HJB_sol:differential_identity} cannot be inferred.
A situation like this one may occur in many different situations. We provide a relevant example in Section \ref{sec:application:partial_observation}.

\subsection{Remarks on Theorems \ref{thm:HJB_sol} and \ref{thm:HJB_sol_weak}}\label{sec:rmks_thms}
We make a couple of remarks on Theorems \ref{thm:HJB_sol} and \ref{thm:HJB_sol_weak}.
According to their statements, for any $\alpha$ in between the free-boundaries $a_+$ and $a_-$, there exist a different solution $(V,\lambda)$ to \eqref{eq:HJB}.
This is not a surprise, as equation \eqref{eq:HJB} is not the dynamic programming equation for the potential, but instead a PDE for a pseudo-potential function, as already noticed in Remark \ref{rmk:potential_function}.
Nevertheless, the optimal control $\xi^\star$ identified by Theorems \ref{thm:HJB_sol} and \ref{thm:HJB_sol_weak} is the same for any pair $(V,\lambda)$: it is enough to notice that its definition relies only on the free-boundaries of the variational inequality satisfied by the value function $U(x,y)$; thus, it is independent of the particular value $\alpha$ in the definition of $(V,\lambda)$.
This observation has important consequences.
Denote by $\lambda(y;\alpha)$ the value profile given by \eqref{eq:main:built_value} associated to $\alpha \in (\sup a_+(y),\inf a_-(y))$, in order to highlight the dependence on the level $\alpha$.
As the optimal control $\xi^\star$ given by \eqref{eq:main:optimal_control} is the same for any $\alpha$, the verification Theorem \ref{thm:verification} and, in particular, equations \eqref{eq:verification:value_geq_equal} and \eqref{eq:verification:value_leq_equal}), imply that
\begin{equation}\label{eq:main:constant_value}
    \inf_{\xi \in \cB} \varlimsup_{T \to +\infty} \frac{1}{T} \, \E_{x,y}\left[\int_0^T c(X_t^\xi, Y_t) \, dt + K_+ \xi^+_T + K_- \xi^-_T\right] = \varlimsup_{T \to \infty} \frac{1}{T}\E_{y}\left[\int_0^T \lambda(Y_t;\alpha) dt \right]
\end{equation}
for any $\alpha \in (\sup_{y \in \R^d} a_+(y),\inf_{y \in \R^d} a_-(y))$.

It is possible to characterize the difference between two value profiles in terms of the derivatives with respect to $y$ of the value function $U$.
Indeed, let $\sup a_+(y) \leq \alpha_1 < \alpha_2 \leq \inf a_-(y)$.
Upon noticing that the map $\alpha \mapsto \lambda(y;\alpha)$ is continuously differentiable  for any fixed $y$, standard computations yield
\begin{equation}\label{eq:main:lambda_difference}
\begin{aligned}
    \lambda & (y;\alpha_2)-\lambda(y;\alpha_1) \\
    & = \int_{\alpha_1}^{\alpha_2} \dfrac{\partial}{\partial z} \left[ c(z,y) + U(z,y) b(z,y) + \frac{1}{2}\sigma^2(z,y) U_{x}(z,y) + \sigma(z,y)(\rho\zeta)(y)U_y(z,y)  \right] dz \\
    & = - \int_{\alpha_1}^{\alpha_2} \left( \eta(y)\nabla_{y}U(z,y) +\frac{1}{2}\tr\big(a(y)\nabla^2_{y}U(z,y)\big)\right) dz,
\end{aligned}    
\end{equation}
where we have added and subtracted $\eta(y)\nabla_{y}U(z,y) +\frac{1}{2}\tr\big(a(y)\nabla^2_{y}U(z,y)\big)$ and used the fact that $(z,y) \in \cC$ for any $\alpha_1 \leq z \leq \alpha_2$.
Thus, the existence of different value profiles is only due to the presence of the factor process $Y$.

\smallskip
Equations \eqref{eq:main:constant_value} and \eqref{eq:main:lambda_difference} allow to recover the usual representation of the value of the problem in terms of the boundaries of the inaction region for one-dimensional singular control problems.
Indeed, suppose that $X^0$ is an ergodic diffusion and that the coefficients $\eta$ and $\zeta$ in \eqref{eq:SDE} are constantly null.
This amounts to consider the deterministic constant process $Y_t = y$ appearing in the coefficients of $X^\xi$ and in the instantaneous cost as a fixed deterministic parameter.
In particular, this implies that the free-boundaries $a_\pm(y)$ depend on $y$ only parametrically, so that we can define $(V,\lambda)$ in \eqref{eq:main:built_potential} and \eqref{eq:main:built_value} by choosing any $\alpha \in (a_+(y),a_-(y))$.
Equation \eqref{eq:main:lambda_difference} then implies
\[
\lambda(y;\alpha_1) = \lambda(y;\alpha_2) \quad \forall a_+(y) \leq \alpha_1 \leq \alpha_2 \leq a_-(y).
\]
By continuity of $\lambda(y;\alpha)$ with respect to $\alpha$, one gets 
\begin{equation}\label{eq:main:relation_1d}
c(a_-(y),y) + K_-b(a_-(y),y) = \lim_{\alpha \uparrow a_-}\lambda(y;\alpha) = \lim_{\alpha \downarrow a_+}\lambda(y;\alpha) = c(a_+(y),y) - K_+b(a_+(y),y),
\end{equation}
where we used the fact that $U(a_\pm(y),y)=\mp K_\pm$ and $U_x(a_\pm(y),y) = 0$ for any $y \in \R^d$.
Equation \eqref{eq:main:relation_1d} is well-known in literature, as it gives a relation between the value of the problem $\lambda(y)$ and the free-boundaries $a_\pm(y)$, to be used in the the guess-and-verify approach to impose the smooth-fit condition on the potential function (see e.g. \cite[pp. 5-6]{alvarez2018stationary} and in particular equation (2.16) therein).

\section{Applications to inventory models}\label{sec:application}

As applications of our results, we consider two mean-reverting inventory models with stochastic mean-reversion level.
In the following, $X=(X^{\xi}_t)_{t \geq 0}$ is the net inventory process, which captures the difference between regular and customer demands.
The firm controls its inventory level by a process of bounded variation $\xi=(\xi^+,\xi^-) \in \cB$, possibly subject to further restrictions.
Controls are exercised on the net inventory process to maintain the inventory at desired positions.
The dynamics of the inventory process will be given by
\begin{equation}\label{eq:application:inventory}
dX^{\xi}_t = (Y_t - \delta X_t)dt + \sigma dW_t + d\xi^{+}_t - d\xi^{-}_t, \quad X^\xi_0 = x.
\end{equation}
Here, $\delta > 0$ is the depreciation rate and $Y=(Y_t)_{t \geq 0}$ is the mean-reversion level, which we will be subject to further specifications.
We note that positive and negative net-inventory levels mean inventory and backlog, respectively.

Brownian models for the (net) inventory process are nowadays classical; see, e.g., the review \cite{perera2023inventory_models} and the references therein.
{ The behavior of the (net) inventory process is assumed to be mean-reverting as in \cite{cadenillas2010optimal,hu_liu2018mean_reverting,liu_bensoussan2019ergodic_inventory}. In particular, mean-reversion can be thought of as an effect of deterioration (see \cite{goyal2001recent,raafat1991survey}).}
Finally, we note that the ergodic optimization criterion has nowadays a long history in the inventory management literature (among others, see \cite{dai2013brownian_inventory,helmes2017continuous,helmes2018weak_inventory,liu_bensoussan2019ergodic_inventory,taksar1985average,yao2015optimal_brownian_inventory}).

The two models we consider below in Section \ref{sec:application:partial_observation} and \ref{sec:application:full_observation} differ both in the dynamics of mean reversion process $Y$, as well as in the information available to the firm.
The model of Section \ref{sec:application:partial_observation} features partial information, which we address by studying the associated full-information (separated) problem.
This will lead us to deal with a degenerate state process, in that the generator of the underlying state process $(\widehat{X},\widehat{Y})$ \eqref{eq:X_hat_0} of the auxiliary Dynkin game is not uniformly elliptic.
As a consequence, we will not be able to deduce the $\bbW^{2,\infty}_{loc}$ regularity of the Dynkin game value function, and hence we will rely on Theorem \ref{thm:HJB_sol_weak} to find an optimal control.
In particular, we will show that the assumptions of Theorem \ref{thm:HJB_sol_weak} are satisfied by introducing a proper transformation of the Dynkin game's value function, which will be used to show the regularity of the pseudo-potential function $V$ in \eqref{eq:main:built_potential_weak}.

On the contrary, in Section \ref{sec:application:full_observation}, we deal with a non-degenerate problem, in the sense that the generator of the state process $(\widehat{X},\widehat{Y})$ of the Dynkin game is uniformly-elliptic.
By relying on the results of \cite{chiarolla2000inflation}, we will show that $U \in \bbW^{2,\infty}_{loc}$, so that Theorem \ref{thm:HJB_sol} can be applied to build a solution $(V,\lambda)$ of equation \eqref{eq:HJB} and an optimal control $\xi^\star$ { satisfying} \eqref{eq:main:optimal_control}.

\subsection{Inventory control with partially observable mean-reversion level}\label{sec:application:partial_observation}
We here consider an inventory management problem with unknown { mean-reversion level}.
The setting is closely related to the problem investigated in \cite{federico2023inventory_unknown_demand}, with two major differences.
Differently from \cite{federico2023inventory_unknown_demand}, the drift of the inventory process features an unobservable two-dimensional Markov chain instead of an unobservable Bernoulli random variable, implying that the demand is not only unobservable but it changes over time, at unobservable jump times.
Most importantly, instead of working under the discounted optimization criterion, we work with the ergodic one.

\smallskip
Let $(\Omega,\cF,\bbF = (\cF_t)_{t \geq 0},\P)$ be a complete filtered probability space capturing all the uncertainty of our setting. $\bbF$ denotes the \textit{full information} filtration.
Consider a real-valued $\bbF$-Brownian motion $W$, and let $\varepsilon = (\varepsilon_t)_{t \geq 0}$ be a two-state Markov chain, with state space $\cE = \{ 1,2 \}$ and rate transition matrix (or \mbox{$Q$-matrix}, see, e.g.,~\citep{norris:markovchains}) $\begin{bmatrix} -\lambda_1 & \lambda_1 \\ \lambda_2 & -\lambda_2\end{bmatrix}$, where $\lambda_1, \, \lambda_2 > 0$. In particular, this implies (cf.~\citep[Theorem~2.8.2]{norris:markovchains}) that, for all $t \geq 0$,
\begin{equation*}
    \P(\varepsilon_{t + \Delta t} = j \vert \varepsilon_t = i) = \left\{ \begin{aligned}
        & \lambda_i \Delta t + o(\Delta t), && j \neq i, \\
        & 1 - \lambda_i \Delta t + o(\Delta t), && j = i,
    \end{aligned} \right.
\end{equation*}
as $\Delta t \to 0$, uniformly in $t$. We assume that $\varepsilon$ and $W$ are independent.

\smallskip
Let $m: \cE \to \R$ be equal to $m_1$ if $j = 1$ and $m_2$ if $j = 2$, with $m_1 > m_2$.
Referring to \eqref{eq:application:inventory}, we suppose that, for any $\xi=(\xi^+,\xi^-) \in \cB$, $Y_t = m_{\varepsilon_t}$, so that the inventory process follows the dynamics
\begin{equation}\label{application:dynamics}
    d X^{\xi}_t = (m_{\varepsilon_t} - \delta X^{\xi}_t)dt + \sigma dW_t + d\xi^+_t - d\xi^-_t, \quad X^{\xi}_{0-}=x,
\end{equation}
where $\delta$ and $\sigma$ are positive constants.

Denote by $X^0$ the solution of \eqref{application:dynamics} with control $\xi \equiv 0$.
The \textit{information filtration} is given by the completed natural filtration $\bbF^{X^0}$ of the process $X^0$.
The set $\cB$ of admissible controls for the control problem under partial information is defined as the set of singular controls $\xi$ so that $\xi$ is $\bbF^{X^0}$-adapted.
Notice that the information filtration $\bbF^{X^0}$ is strictly included in the full information filtration $\bbF$. In particular, the values of $\varepsilon$ are not observable, and they can just be inferred from the observation of $X^0$.

Let $c:\R \to \R$ be a continuous function and let $K_+, \, K_- >0$.
Then, the problem is to determine
\begin{equation}\label{eq:application:value}
    \inf_{{\xi \in \cB}} \varlimsup_{T \to \infty}\frac{1}{T}\E_{x,y}\left[\int_0^T c(X^{\xi}_t)dt +K_+\xi^+_T +K_-\xi^-_T \right],
\end{equation}
where $\E_{x,y}[\cdot]$ denotes the expectation under $\P_{x,y}(\cdot)$, which is the probability measure on $(\Omega, \cF)$ under which $X_{0^-}^\xi = x$ and $\epsilon_0$ has distribution $\begin{bmatrix} y \\ 1-y \end{bmatrix}$, for $(x,y) \in \R \times [0,1]$. More precisely, $\bbP_{x,y}(\epsilon_0 = 1) = y$.

In Section \ref{sec:application:partial_observation:filtering}, we will rely on classical results from filtering theory (see, e.g., \cite{bensoussan1992partially_observed,liptser2001statistics1}) in order to reduce the problem to an equivalent two-dimensional full information stochastic singular control problem, so that we are back in the setting of Section \ref{sec:model}.
Then, we will solve the equivalent full information problem in Section \ref{sec:application:partial_observation:filtered_problem}.

\smallskip
We make the following assumptions on the instantaneous cost function $c$:
\begin{assumption}\label{application:assumptions_cost}
The instantaneous cost function $c : \R \to \R$ belongs to { $\dC^\infty(\R)$} and it is strictly convex.
Moreover, there exist constants $p \geq 2$, $\alpha_0,\alpha_1,\alpha_2 > 0$ so that, for any $x \in \R$, it holds
\begin{equation}\label{eq:cost_function:growth_derivative}
\begin{aligned}
    & 0 \leq c(x) \leq \alpha_0(1+\vert x \vert^p), \\
    & \vert c'(x) \vert  \leq \alpha_1(1+\vert x \vert^{p-1}), \\
    & \vert c''(x) \vert  \leq \alpha_2(1+\vert x \vert^{p-2}).
\end{aligned}
\end{equation}
Finally, $\lim_{x \to \pm \infty} c'(x) = \pm\infty$.
\end{assumption}
{ The property $c \in \dC^\infty(\R)$ will be later needed in order to apply the regularity results of \cite{peskir2025weak} in Lemma \ref{filtered:lemma:weak_solution}.}
Notice that Assumption \ref{application:assumptions_cost} include the benchmark case of the quadratic cost $c(x)=(x-\bar{x})^2$, where $\bar{x} \in \R$ is a fixed constant, target level of the inventory.

\subsubsection{Derivation of the separated problem}\label{sec:application:partial_observation:filtering}
In this section, we derive the so-called separated problem (for a reference, see the pioneer work \cite{wonham1968separation}) for the partial information singular control problem.
In this way, we reduce Problem \eqref{eq:application:value} to a complete information setting.

\smallskip
We consider the \textit{filter} $\Pi = (\Pi_t)_{t \geq 0}$, i.e. the process defined by
\[
\Pi_t = \P( \varepsilon_t = 1 \vert \cF^{X^0}_t ) = \E[ \ind_{ \{ \varepsilon_t = 1 \} } \vert \cF^{X^0}_t].
\]
It is well-known that $\Pi$ provides the best mean-square estimate of the law of $\varepsilon$ on the basis of the observation filtration $\bbF^{X^0}$. That is, for any $f:\cE \to \R$, it holds
\[
\E[f_{\varepsilon_t} \vert \cF^{X^0}_t] = \Pi_t f_1 + (1-\Pi_t)f_2 \eqqcolon f(\Pi_t).
\]
By \cite[Exercise 3.27]{bain_crisan2009filtering} (see also \cite[Theorem 9.1]{liptser2001statistics1}), $\Pi$ is the unique strong solution of the following stochastic differential equations:
\[
    d\Pi_t = (\lambda_2 - (\lambda_1 + \lambda_2)\Pi_t) dt + \frac{m_1 - m_2}{\sigma}\Pi_t(1-\Pi_t)d B_t, \quad \Pi_{0} = y,
\]
where $y=\P(\varepsilon_0 = 1) \in [0,1]$ is the prior belief on the initial state of the process and $B = (B_t)_{t \geq 0}$ is the \textit{innovation process}, defined by
\begin{equation}\label{eq:filtered:innovation_process}
    B_t \coloneqq W_t - \int_0^t \sigma^{-1}\left( m(\Pi_s) - m_{\varepsilon_s} \right)ds, \quad t \geq 0.
\end{equation}
In particular, $B$ is an $\bbF^{X^0}$-Brownian motion.
In the following, we set $\gamma \coloneqq \frac{m_1 - m_2}{\sigma}$, which is strictly positive as $m_1 > m_2$ by assumption.
The process $X^0$ is an It\^{o}-process with respect to the innovation process so that the dynamics of the pair $(X^0,\Pi)$ are coupled by the equations
\begin{equation}\label{eq:filtering:filtered_dynamics}
    \left\{ \begin{aligned}
    dX^{0}_t & =  (m(\Pi_t) -\delta X^{0}_t )dt + \sigma d B_t, && X^0_{0} = x, \\
    d\Pi_t & = (\lambda_2 - (\lambda_1 + \lambda_2)\Pi_t) dt + \gamma\Pi_t(1-\Pi_t)dB_t, && \Pi_{0} = y.
    \end{aligned}  \right.
\end{equation}
As $\Pi$ is bounded by definition, the system of stochastic differential equations \eqref{eq:filtering:filtered_dynamics} admits a pathwise unique solution.
Moreover, by continuously extending the coefficients of the diffusion $\Pi$ to constants outside of $[0,1]$, $\Pi$ can be regarded as the solution of a stochastic differential equation with bounded Lipschitz coefficients, so that {\cite[Theorem~5.4.20]{karatzas_shreve}} implies that the pair $(X^0,\Pi)$ is strong Markov.
We regard the state space of $(X^0,\Pi)$ to be $\R \times (0,1)$. Indeed, notice that the boundary points $0$ and $1$ are entrance-not-exit for the process $\Pi$, as can be shown by applying Feller's test for explosion (see \cite[Section 5.5]{karatzas_shreve}).
In other words, $\Pi$ can start from $y=0$ or $y=1$, but it cannot reach any of these points in finite time.
Hence, in our subsequent analysis, we shall exclude the values $y=0$ of $y=1$ from our analysis.

By using the filter $\Pi$, we introduce the following problem: Find an optimal control $\xi^{\star} \in \cB$ such that, for any $(x,y) \in \R \times (0,1)$, 
\begin{multline}\label{eq:filtered:value}
    \inf_{\xi \in \cB} \varlimsup_{T \to \infty}\frac{1}{T}\E_{x,y}\left[\int_0^T c(X^{\xi}_t)dt +K_+\xi^+_T +K_-\xi^-_T \right] \\
    = \varlimsup_{T \to \infty}\frac{1}{T}\E_{x,y}\left[\int_0^T c(X^{\xi^{\star}}_t)dt +K_+\xi^{\star,+}_T +K_-\xi^{\star,-}_T \right]
\end{multline}
under the constraint
\begin{equation}\label{eq:filtering:controlled_dynamics}
    \left\{ \begin{aligned}
    dX^{\xi}_t & = (m(\Pi_t) -\delta X^{\xi}_t )dt + \sigma d B_t + d\xi_t, && X^\xi_{0^-} = x, \\
    d\Pi_t & = (\lambda_2 - (\lambda_1 + \lambda_2)\Pi_t) dt + \gamma\Pi_t(1-\Pi_t)dB_t, && \Pi_{0} = y.
    \end{aligned}  \right.
\end{equation}
Thanks to the strong uniqueness of the solutions to \eqref{application:dynamics} and \eqref{eq:filtering:controlled_dynamics}, it is standard to see that \eqref{eq:application:value} under \eqref{application:dynamics} is equivalent to \eqref{eq:filtered:value} under \eqref{eq:filtering:controlled_dynamics} (see, e.g., \cite[Proposition 3.11]{callegaro2020optimalreduction}, for a similar setting).
According to such an equivalence, we now aim at solving the cost minimization problem \eqref{eq:filtered:value} under the constraint \eqref{eq:filtering:controlled_dynamics}.

\smallskip
By using the notation of Section \ref{sec:model}, we have that the factor process $Y$ is now one-dimensional and it is given by the filter $\Pi$.
The coefficient $(\eta,\zeta)$ are given by
\begin{equation*}
\eta(y) \coloneqq (\lambda_2 - (\lambda_1+\lambda_2)y), \quad \zeta(y) \coloneqq \gamma y (1-y).
\end{equation*}
The coefficients $(b,\sigma)$ are given by
\begin{equation*}
b(x,y) \coloneqq m_2 + (m_1-m_2)y -\delta x, \quad \sigma(x,y) \coloneqq \sigma > 0.
\end{equation*}
The processes $X$ and $Y$ are run by the same Brownian motion $B$, given by the innovation process \eqref{eq:filtered:innovation_process}, so that the correlation is $\rho = 1$.
We notice that Assumption \ref{assumptions} is satisfied.

\begin{remark}
    The choice of singular controls adapted to $\bbF^{X^0}$, i.e., to the filtration generated by the uncontrolled observed process may seem rather odd. Indeed, the standard requirement is that controls are adapted with respect to the observed process, which in our case is $X^\xi$. However, since this process is controlled, the available information changes with the control itself, i.e., the issue of \textit{circularity of information} arises. From a technical point of view this poses an obstacle to the formulation of our control problem, which is bypassed precisely by choosing controls adapted to $\bbF^{X^0}$, in the original problem. 
    
    This turns out to be equivalent to selecting controls adapted to $\bbF^B$, as the two filtrations coincide (cf.~\citep[p.~31]{liptser2001statistics2}). Indeed, on the one hand, the innovation process $B$ is by construction an $\bbF^{X^0}$-Wiener process, and hence $\bbF^B \subseteq \bbF^{X^0}$; on the other hand, the system of SDEs~\eqref{eq:filtering:controlled_dynamics} with $\xi \equiv 0$, which is driven by $B$ alone and where $\Pi$ is $\bbF^{X^0}$-adapted by construction, admits a unique pathwise strong solution, and hence $\bbF^{X^0} \subseteq \bbF^{B}$. Therefore, in our model the controller is able to observe the noise acting on the fully observable system~\eqref{eq:filtering:controlled_dynamics}, given by the innovation process. In particular, we see that admissible singular controls in $\cB$ are also $\bbF^B$-adapted.
    The equivalence of the two filtrations reveals that the formulation of the separated problem is the standard one for a singular control problem under complete observation, in which we choose controls to be adapted to the noise driving the system. In this context the noise is the innovation, which represents the new information content that we can extract from the observation as time passes.
    
    Note that, differently from our model, whenever the observation is not directly controlled, but the control acts indirectly on it through the signal process, it is possible to use a change of measure technique to solve this issue, see, e.g.,~\citep[Chapter~8]{bensoussan1992partially_observed}. We can also observe that, while the (completed) natural filtration generated by $X^\xi$ is contained in $\bbF^{X^0}$, for any possible choice of admissible $\xi$, the converse may not be true in general. This property could be verified by filtration $\bbF^{X^{\xi^\star}}$, with $\xi^\star$ the optimal control, but it needs to be checked \textit{a posteriori}.
\end{remark}

\subsubsection{The auxiliary Dynkin game for the separated problem}\label{sec:application:partial_observation:filtered_problem}
As described in Section \ref{sec:model}, the first step to solve the ergodic control problem \eqref{eq:filtered:value} (and thus the original one \eqref{eq:application:value}) is to identify the auxiliary Dynkin game \eqref{eq:dynkin} and to show that Hypothesis \ref{conj:main} holds true.
We first establish that the auxiliary Dynkin game has a value and that there exists a saddle point $(\tau^*,\theta^*)$, which is characterized in terms of the functions $a_-$ and $a_+$, the so called \textit{free-boundaries} of the PDE satisfied by $U$.
Then, we show that Hypothesis \ref{conj:main} is satisfied. In addition, we show that $U$ is $\dC^1$ over the whole state space.

\smallskip
As the volatility of $X^0$ is constant, the underlying Markov process of the Dynkin game $(\widehat{X},\widehat{Y})$ defined by \eqref{eq:X_hat_0} is given by $(X^0,\Pi)$ itself.
To ease the notation, we set $\cO = \R \times (0,1)$ to denote the state space of $(X^0,\Pi)$.
When needed, in order to simplify the notation, we write $(X^{x,y},\Pi^{x,y})$ to stress the dependence on the initial position $(x,y) \in \cO$.

The Dynkin game is given by 
\begin{equation}\label{eq:filtered:U}
U(x,y) \coloneqq \inf_{\tau \geq 0} \sup_ {\theta \geq 0} \E\left[\int_0^{\tau \land \theta} e^{-\delta t} c' (X^{x,y}_t)dt +K_- e^{-\delta \tau}\ind_{\tau < \theta} - K_+ e^{-\delta \theta}\ind_{\theta < \tau} \right],
\end{equation}
where $\tau$, $\theta$ are stopping times of the filtration generated by the innovation process $B$ in \eqref{eq:filtered:innovation_process}.
Occasionally, we will use the notation
\begin{equation}\label{eq:filtered:zero_sum_functional}
M_{(x,y)}(\tau,\theta) \coloneqq \E\left[\int_0^{\tau \land \theta} e^{-\delta t} c' (X^{x,y}_t)dt +K_- e^{-\delta \tau}\ind_{\tau < \theta} - K_+ e^{-\delta \theta}\ind_{\theta < \tau} \right],
\end{equation}
so that $U(x,y) = \inf_{\tau} \sup_{\theta} M_{(x,y)}(\tau,\theta)$.
For later use, we state a simple integrability result, whose proof is omitted.
\begin{lemma}\label{filtered:lemma:finiteness}
Let $(X^{x,y},\Pi^{x,y})$ { be the solution of} \eqref{eq:filtering:filtered_dynamics} starting from $(x,y) \in \mathcal{O}$.
Then, $X^{x,y}$ has the following explicit representation:
\begin{equation}\label{eq:explicit_representation}
    X^{x,y}_t = e^{-\delta t}\left( x + \int_0^t e^{\delta s}m(\Pi^{y}_s)ds + \sigma\int_0^t e^{\delta s}dB_s \right).
\end{equation}
Moreover, for any $q > 0$ there exists a positive constant $\kappa$, dependent of $q$, $m_1$, $m_2$, $\delta$ and $\sigma$ but independent of $(x,y)$, such that
\begin{equation}\label{eq:filtered:bound}
\E\left[\int_0^{\infty} e^{-\delta t} \vert X^{x,y}_t \vert^q dt\right] \leq \kappa(1+ \vert x \vert^{q}).
\end{equation}
\end{lemma}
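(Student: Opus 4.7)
The plan is to treat the explicit representation and the moment bound separately, with the representation providing the key tool for the bound.

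For the representation, the idea is to apply variation of constants to the linear-in-$X$ drift. Concretely, I would apply It\^{o}'s formula to the process $Z_t := e^{\delta t} X^0_t$. Since $X^0$ has no jumps and its dynamics are given by the first line of~\eqref{eq:filtering:filtered_dynamics}, one gets
\begin{equation*}
dZ_t = \delta e^{\delta t} X^0_t\, dt + e^{\delta t}\bigl[(m(\Pi_t) - \delta X^0_t)\,dt + \sigma\, dB_t\bigr] = e^{\delta t} m(\Pi_t)\, dt + \sigma e^{\delta t}\, dB_t,
\end{equation*}
so integrating from $0$ to $t$ and multiplying by $e^{-\delta t}$ yields the claimed formula~\eqref{eq:explicit_representation}.

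For the moment bound, the main observation is that $\Pi_t \in [0,1]$ $\P$-a.s., so $|m(\Pi_t)| \leq M := \max(|m_1|,|m_2|)$ uniformly in $(t,\omega)$. From~\eqref{eq:explicit_representation} we can split
\begin{equation*}
|X^{x,y}_t| \leq e^{-\delta t}|x| + M\,\frac{1 - e^{-\delta t}}{\delta} + |N_t|, \qquad N_t := \sigma e^{-\delta t}\int_0^t e^{\delta s}\, dB_s,
\end{equation*}
where the deterministic pieces are bounded by $|x| + M/\delta$. The stochastic piece $N_t$ is a centered Gaussian random variable with variance $\sigma^2(1 - e^{-2\delta t})/(2\delta) \leq \sigma^2/(2\delta)$, so by standard Gaussian moments $\E[|N_t|^q] \leq \bar{C}_q$ for a constant depending only on $q,\sigma,\delta$, uniformly in $t$. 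Combining the two contributions via the elementary inequality $|a+b|^q \leq 2^{q-1}(|a|^q + |b|^q)$ gives a uniform-in-$t$ bound $\E[|X^{x,y}_t|^q] \leq \kappa'(1 + |x|^q)$, with $\kappa'$ depending on $q,m_1,m_2,\sigma,\delta$ but not on $(x,y,t)$.

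Finally, multiplying by $e^{-\delta t}$ and integrating over $[0,\infty)$ gives
\begin{equation*}
\E\left[\int_0^\infty e^{-\delta t}|X^{x,y}_t|^q\, dt\right] \leq \frac{\kappa'}{\delta}(1 + |x|^q),
\end{equation*}
which is the required estimate with $\kappa := \kappa'/\delta$. There is no real obstacle here: the only subtle point is making sure that the constant does not depend on the initial belief $y$, which follows automatically because the boundedness of $\Pi$ in $[0,1]$ holds uniformly in $y$, and the Gaussian piece is independent of $y$ altogether.
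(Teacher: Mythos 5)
The paper explicitly omits the proof of this lemma, so there is no reference argument to compare against, but your proposal is the natural and essentially complete one. The variation-of-constants step via It\^o's formula applied to $e^{\delta t}X^0_t$ is exactly the standard derivation of~\eqref{eq:explicit_representation}, and the moment bound follows cleanly from the decomposition into the exponentially damped initial data, the uniformly bounded drift contribution (using $\Pi_t \in [0,1]$, hence $|m(\Pi_t)| \le \max(|m_1|,|m_2|)$), and the centered Gaussian Wiener integral with variance bounded by $\sigma^2/(2\delta)$ uniformly in $t$. One small point of care: the inequality $|a+b|^q \le 2^{q-1}(|a|^q+|b|^q)$ holds only for $q\ge 1$; since the lemma claims the bound for all $q>0$, you should instead use the constant $C_q := \max(1,2^{q-1})$ (for $0<q<1$ the map $t\mapsto t^q$ is subadditive, giving $|a+b|^q \le |a|^q+|b|^q$). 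With that adjustment the constant $\kappa$ retains the claimed dependence on $q,m_1,m_2,\delta,\sigma$ only, and the integration in $t$ closes the argument.
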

In particular, Lemma \ref{filtered:lemma:finiteness} and Assumptions \ref{application:assumptions_cost} imply that $M_{(x,y)}(\tau,\theta)$ is finite for any pair of stopping times $(\tau,\theta)$.

\smallskip
The next lemma shows that the Dynkin game defined by \eqref{eq:filtered:U} has a value and both the sup-player and the inf-player have an optimal stopping strategy.
As a consequence, we get a first characterization of the optimal stopping times and we deduce that we can exchange supremum and infimum in the definition of $U$.
We define the regions
\begin{equation}\label{eq:filtered:continuation_stopping1}
\begin{aligned}
    \mathcal{S}_+ & \coloneqq \{ (x,y) \in \cO: \; U(x,y) \leq - K_+ \}, \\
    \mathcal{S}_- & \coloneqq \{ (x,y) \in \cO: \; U(x,y) \geq K_- \}, \\
    \mathcal{C} & \coloneqq \cO \setminus (\mathcal{S}_+ \cup \mathcal{S}_-) = \{ (x,y) \in \cO: \; -K_+ < U(x,y) < K_- \}.
\end{aligned}
\end{equation}

\begin{lemma}\label{filtered:lemma_value_game}
The Dynkin game \eqref{eq:filtered:U} has a value.
Moreover, the stopping times
\begin{equation}\label{eq:filtered:optimal_stopping_times}
    \tau^* \coloneqq \inf\{t \geq 0: \; (X^0_t,\Pi_t) \in \mathcal{S}_-\}, \quad \theta^* \coloneqq \inf\{t \geq 0: \; (X^0_t,\Pi_t) \in \mathcal{S}_+\}
\end{equation}
are optimal strategies for the inf-player and the sup-player, respectively.
\end{lemma}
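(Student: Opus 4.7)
The plan is to cast the zero-sum game \eqref{eq:filtered:U} into the standard form of a Markovian Dynkin game with ordered obstacles and then apply the classical existence theorems to deduce both the existence of a value and the optimality of the proposed first-entry times.

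First, I would rewrite the running gain plus terminal payoffs in obstacle form. Setting
$$L_t \coloneqq \int_0^t e^{-\delta s} c'(X^0_s)\,ds - K_+ e^{-\delta t}, \qquad R_t \coloneqq \int_0^t e^{-\delta s} c'(X^0_s)\,ds + K_- e^{-\delta t},$$
the functional $M_{(x,y)}(\tau,\theta)$ in \eqref{eq:filtered:zero_sum_functional} agrees, modulo the tie event $\{\tau=\theta\}$ (which is irrelevant since $-K_+<K_-$), with the standard Dynkin payoff $\E[R_\tau\,\ind_{\tau\leq\theta} + L_\theta\,\ind_{\theta<\tau}]$ with lower obstacle $L$ and upper obstacle $R$, satisfying $L_t<R_t$ pathwise. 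The required integrability follows from Assumption \ref{application:assumptions_cost} and Lemma \ref{filtered:lemma:finiteness}: using $|c'(x)|\leq\alpha_1(1+|x|^{p-1})$ together with \eqref{eq:filtered:bound} for $q=p-1$, one obtains $\E_{x,y}[\int_0^\infty e^{-\delta s}|c'(X^0_s)|\,ds]<\infty$, which in turn implies $\sup_{t\geq 0}(|L_t|+|R_t|)\in L^1(\P_{x,y})$ together with path-continuity of $L$ and $R$. Combined with the strong Markov property of $(X^0,\Pi)$ on $\cO$, the hypotheses of the classical Markovian Dynkin game existence theorem are met, so the game has a value and $U(x,y)=\inf_\tau\sup_\theta M_{(x,y)}(\tau,\theta)=\sup_\theta\inf_\tau M_{(x,y)}(\tau,\theta)$.

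To identify the saddle point, I would apply the standard Snell-envelope characterization to the process $Z_t\coloneqq e^{-\delta t}U(X^0_t,\Pi_t)+\int_0^t e^{-\delta s}c'(X^0_s)\,ds$. Dynamic programming yields that $Z$ is a submartingale up to $\tau^*$ and a supermartingale up to $\theta^*$ defined in \eqref{eq:filtered:optimal_stopping_times}, so that $Z^{\tau^*\wedge\theta^*}$ is a martingale; by optional sampling and using $U(X^0_{\tau^*},\Pi_{\tau^*})=K_-$ on $\{\tau^*<\infty\}$ and $U(X^0_{\theta^*},\Pi_{\theta^*})=-K_+$ on $\{\theta^*<\infty\}$, one obtains $M_{(x,y)}(\tau^*,\theta)\leq U(x,y)\leq M_{(x,y)}(\tau,\theta^*)$ for every admissible pair $(\tau,\theta)$, giving the saddle-point property and in particular the optimality of $\tau^*$ and $\theta^*$.

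The main technical obstacle is verifying the hypotheses of the chosen Dynkin game theorem in our infinite-horizon, non-compact setting with only polynomial (rather than bounded) running cost, and in particular ensuring that $\tau^*,\theta^*$ can indeed be taken as the first-entry times into the \emph{closed} sets $\cS_\pm$ as defined through the value $U$ itself (which requires a preliminary upper semi-continuity of $U$ in the right variables). This can be addressed by a standard localization: stop at exit times $\tau_n$ of growing balls $B_n\uparrow\cO$, apply the general theorem to obtain the value $U_n$ of the truncated game, and pass to the limit using the uniform $L^1$ bound on $\sup_t(|L_t|+|R_t|)$ together with dominated convergence; the pointwise limit then yields both the value $U$ and the identification of $\tau^*,\theta^*$ as a saddle point.
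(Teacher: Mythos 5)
Your overall strategy is sound and arrives at the same conclusion, but you take a genuinely different route from the paper. The paper first subtracts off the "potential" $\Phi(x,y) := \E[\int_0^\infty e^{-\delta s}c'(X^{x,y}_s)\,ds]$ to transform the functional $M_{(x,y)}(\tau,\theta)$ into one with \emph{no running cost} (only terminal payoffs $G_1, G_2, G_3$ evaluated at the augmented process $(t, X^0_t, \Pi_t)$), and then applies Ekström--Peskir \cite[Theorem 2.1]{ekstrom_peskir2008SICON} \emph{once}, which simultaneously yields the value, the exchange of sup/inf, and the optimality of the first-entry times. You instead keep the running cost inside the cumulative payoff processes $L_t, R_t$ and propose invoking a generic existence theorem together with a localization argument. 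The paper's route avoids localization altogether, since the $\E[\sup_t |G_i|]<\infty$ condition is verified directly from Lemma~\ref{filtered:lemma:finiteness} over the whole infinite horizon; this makes the argument shorter and more self-contained. Your obstacle-form decomposition is equivalent to the paper's $G_1 \leq G_3 \leq G_2$ decomposition up to the additive shift by $\Phi$, and the tie-event issue you flag is handled in the paper precisely by the three-function formulation of Ekström--Peskir.

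Two points in your sketch deserve more care. First, when you write that ``dynamic programming yields that $Z$ is a submartingale up to $\tau^*$ and a supermartingale up to $\theta^*$,'' you are invoking the semi-harmonic characterization of the value function, but that is precisely what the existence theorem you rely on must deliver -- it is not an independent fact that can be stated before the value is constructed. (The paper cites this characterization from \cite[Theorem~2.1]{peskir2008optimal_stopping_games} in the \emph{next} lemma, once the value and its regularity are known.) If the general Dynkin theorem you cite already produces the saddle point as first-entry times of the value process into the obstacle sets, the Snell-envelope paragraph is redundant; if not, you need to cite where the sub/super-martingale property comes from. Second, the localization step is stated but not closed: you must show that the truncated values $U_n \to U$ (monotonicity plus the uniform $L^1$ domination is enough), and separately that the truncated first-entry times converge to $\tau^*, \theta^*$ -- the latter requires some form of regularity of $U$ (semicontinuity in the right direction) to guarantee that $\tau^*, \theta^*$ are well-defined stopping times and limits of the localized ones. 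The paper's route via Ekström--Peskir bypasses both issues, which is why it is the cleaner choice here.
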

\begin{proof}
The proof consists in an application of \cite[Theorem 2.1]{ekstrom_peskir2008SICON}.
Consider the following functions
\begin{equation}\label{filtered:lemma_value_game:auxiliary_functions}
\begin{aligned}
& \Phi(x,y) \coloneqq \E\left[\int_0^\infty e^{-\delta s} c'(X^{x,y}_s)ds \right], && G_1(t,x,y) \coloneqq e^{-\delta t}(-K_+ - \Phi(x,y)), \\
& G_2(t,x,y) \coloneqq e^{-\delta t}(K_- - \Phi(x,y)), && G_3(t,x,y) \coloneqq -e^{-\delta t} \Phi(x,y).
\end{aligned}
\end{equation}
Notice that $\Phi$ is measurable and finite by Lemma \ref{filtered:lemma:finiteness}, and thus so are $G_i$ for any $i=1,2,3$. Moreover,  $G_1 \leq G_3 \leq G_2$.
Denote by $(X^{t,x,y}_s,\Pi^{t,x,y}_s)_{s \geq t}$ the solution of \eqref{eq:filtering:filtered_dynamics} starting from $(x,y) \in \cO$ at time $t \geq 0$ and define the auxiliary Dynkin game
\begin{multline}\label{eq:filtered:lemma_value:auxiliary_dynkin}
\Tilde{u}(t,x,y) \coloneqq \inf_{\tau} \sup_{\theta} \E\Big[  \ind_{\tau < \theta} G_2(t+\tau,X^{t,x,y}_{t+\tau},\Pi^{t,x,y}_{t+\tau}) \\
+ \ind_{\theta < \tau} G_1(t+\theta,X^{t,x,y}_{t+\theta},\Pi^{t,x,y}_{t+\theta}) + \ind_{\tau = \theta}G_3(t+\tau,X^{t,x,y}_{t+\tau},\Pi^{t,x,y}_{t+\tau})\Big].
\end{multline}
Notice that the process $\cZ = (t,X^{x,y}_t,\Pi^{x,y}_t)_{t \geq 0}$ is  continuous and strong-Markov.
Moreover, by exploiting Lemma \ref{filtered:lemma:finiteness}, the explicit representation of $X^{x,y}$ in \eqref{eq:explicit_representation} and the boundedness of $\Pi^y$, we have
\begin{multline}
    \E[\sup_{t \geq 0} \vert G_1(t,X^{x,y}_t,\Pi^{x,y}_t)\vert] \leq K_+ + \E[\sup_{t \geq 0} \vert e^{-\delta t}\Phi(X^{x,y}_t,\Pi^{x,y}_t)\vert] \\
    \leq \kappa(1 +\E[\sup_{t \geq 0} e^{-\delta t} \vert X^{x,y}_t\vert^{p-1}]) < \infty,
\end{multline}
and analogously for $G_2$ and $G_3$.
Hence, by \cite[Theorem 2.1]{ekstrom_peskir2008SICON}, the stopping times
\begin{align*}
    & \tau^* = \inf\{ s \geq 0: \Tilde{u}(t+s,X^{x,y}_{t+s},\Pi^{x,y}_{t+s}) = G_2(t+s,X^{x,y}_{t+s},\Pi^{x,y}_{t+s}) \}, \\
    & \theta^* = \inf\{ s \geq 0:  \Tilde{u}(t+s,X^{x,y}_{t+s},\Pi^{x,y}_{t+s}) = G_1(t+s,X^{x,y}_{t+s},\Pi^{x,y}_{t+s}) \}
\end{align*}
are a saddle point for the Dynkin game \eqref{eq:filtered:lemma_value:auxiliary_dynkin}.
We now prove that $(\tau^*,\theta^*)$ are optimal for the game with value $U(x,y)$ as well.
We start by noticing that $(X^{0,x,y}_s,\Pi^{0,x,y}_s)=(X^{x,y}_s,\Pi^{x,y}_s)$ for any $s \geq 0$ and that $\Tilde{u}(t,x,y) = e^{-\delta t} u(x,y)$, where
\begin{multline*}
u(x,y) = \inf_{\tau} \sup_{\theta} \E\big[  \ind_{\tau < \theta} G_2(\tau,X^{x,y}_\tau,\Pi^{x,y}_\tau) + \ind_{\theta < \tau} G_1(\theta,X^{x,y}_\theta,\Pi^{x,y}_\theta) \\
+ \ind_{\tau = \theta}G_3(\tau,X^{x,y}_\tau,\Pi^{x,y}_\tau)\big].
\end{multline*}
Next, we notice that, by the strong Markov property, it holds for any pair $(\tau,\theta)$ 
\begin{multline}\label{eq:filtered:lemma_value_game:equality_values}
    M_{(x,y)}(\tau,\theta) = \Phi(x,y) + \E\big[  \ind_{\tau < \theta} G_2(\tau,X^{x,y}_\tau,\Pi^{x,y}_\tau) + \ind_{\theta < \tau} G_1(\theta,X^{x,y}_\theta,\Pi^{x,y}_\theta) \\
    + \ind_{\tau = \theta}G_3(\tau,X^{x,y}_\tau,\Pi^{x,y}_\tau)\big].
\end{multline}
Finally, by taking the infimum over $\tau$ and supremum over $\theta$ in \eqref{eq:filtered:lemma_value_game:equality_values}, we deduce $U(x,y) = \Phi(x,y) + u(x,y)$.
This implies that $(\tau^*,\theta^*)$ can be expressed as in \eqref{eq:filtered:optimal_stopping_times}.
\end{proof}

The second step is to prove the following preliminary regularity properties of $U$:
\begin{lemma}\label{filtered:lemma_regularity_U}
\mbox{}
\begin{enumerate}[label=(\roman*)]
    \item \label{filtered:lemma_regularity_U:continuity} $U$ is jointly continuous in $\cO$.
    \item \label{filtered:lemma_regularity_U:monotonicity} For any fixed $y \in (0,1)$, $x \mapsto U(x,y)$ is non-decreasing and, for any fixed $x \in \R$, the map $y \mapsto U(x,y)$ is non-decreasing.
\end{enumerate}
\end{lemma}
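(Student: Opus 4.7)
\smallskip
\textbf{Proof proposal.} Both statements will be derived from pathwise monotonicity and continuous dependence of $X^{x,y}$ on $(x,y)$; these propagate to the functional $M_{(x,y)}(\tau,\theta)$ uniformly in the stopping times $(\tau,\theta)$, because the terminal payoff terms $K_- e^{-\delta\tau}\ind_{\tau<\theta} - K_+ e^{-\delta\theta}\ind_{\theta<\tau}$ appearing in \eqref{eq:filtered:zero_sum_functional} do not depend on $(x,y)$.

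For the monotonicity in \ref{filtered:lemma_regularity_U:monotonicity}, I would first invoke the explicit representation \eqref{eq:explicit_representation}, which gives $X^{x,y}_t-X^{x',y}_t = e^{-\delta t}(x-x')$ almost surely for all $t\geq 0$, so that $x\mapsto X^{x,y}_t$ is strictly increasing pathwise. For the $y$-variable, the filter SDE in \eqref{eq:filtering:filtered_dynamics} is one-dimensional, driven by the single Brownian motion $B$, with Lipschitz coefficients upon continuous extension outside $[0,1]$; applying the standard comparison theorem for scalar SDEs (e.g., \cite[Proposition~5.2.18]{karatzas_shreve}) yields $\Pi^{y_1}_t \leq \Pi^{y_2}_t$ a.s.\ for every $t\geq 0$ whenever $y_1\leq y_2$. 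Since $m(\pi)=m_2+(m_1-m_2)\pi$ is strictly increasing (as $m_1>m_2$), a second look at \eqref{eq:explicit_representation} then gives $X^{x,y_1}_t\leq X^{x,y_2}_t$ a.s. By the strict convexity of $c$ in Assumption \ref{application:assumptions_cost}, $c'$ is non-decreasing, so $c'(X^{x,y}_t)$ is pathwise non-decreasing in both $x$ and $y$; hence so is $M_{(x,y)}(\tau,\theta)$ for every fixed $(\tau,\theta)$, and the $\inf_\tau\sup_\theta$ operation preserves this monotonicity.

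For the continuity statement \ref{filtered:lemma_regularity_U:continuity}, the plan is to bound the oscillation of $U$ by that of $M_{(\cdot,\cdot)}(\tau,\theta)$ uniformly in $(\tau,\theta)$. Fixing $(x_0,y_0)\in\cO$, the $K_\pm$-terms cancel in the difference $M_{(x,y)}(\tau,\theta)-M_{(x_0,y_0)}(\tau,\theta)$, so
\begin{equation*}
|M_{(x,y)}(\tau,\theta)-M_{(x_0,y_0)}(\tau,\theta)|
\leq \int_0^\infty e^{-\delta t}\,\E\big[|c'(X^{x,y}_t)-c'(X^{x_0,y_0}_t)|\big]\,dt \eqqcolon \Lambda(x,y).
\end{equation*}
Using the elementary inequality $|\inf_\tau\sup_\theta f - \inf_\tau\sup_\theta g|\leq \sup_{\tau,\theta}|f-g|$, one obtains $|U(x,y)-U(x_0,y_0)|\leq \Lambda(x,y)$. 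Standard continuous dependence of SDEs with Lipschitz coefficients on initial data gives $X^{x,y}_t\to X^{x_0,y_0}_t$ a.s.\ for every $t$ as $(x,y)\to(x_0,y_0)$, and continuity of $c'$ then yields $c'(X^{x,y}_t)\to c'(X^{x_0,y_0}_t)$ a.s. The polynomial growth $|c'(x)|\leq\alpha_1(1+|x|^{p-1})$ from Assumption \ref{application:assumptions_cost} together with the uniform moment bound \eqref{eq:filtered:bound} provides a $(x,y)$-uniform $L^1(dt)$-majorant on a bounded neighbourhood of $(x_0,y_0)$, so that two successive applications of dominated convergence (first in $\omega$, then in $t$) yield $\Lambda(x,y)\to 0$.

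The only real delicacy is this last step: one needs simultaneous uniform-in-$(x,y)$ control of the moments of $X^{x,y}_t$ together with integrability in $t$, which is precisely what Lemma \ref{filtered:lemma:finiteness} supplies. Everything else follows routinely from the Lipschitz/strong-Markov structure of \eqref{eq:filtering:filtered_dynamics} and the convexity of $c$.
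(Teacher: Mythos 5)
Your proof is correct and follows the same overall route as the paper: reduce the oscillation of $U$ to $\int_0^\infty e^{-\delta t}\,\E\big[|c'(X^{x,y}_t)-c'(X^{x_0,y_0}_t)|\big]\,dt$ and apply dominated convergence via Lemma \ref{filtered:lemma:finiteness}; for monotonicity, use the affine representation \eqref{eq:explicit_representation} in $x$ and a scalar comparison theorem in $y$. The one genuine (and welcome) difference is in how you obtain the uniform-in-$(\tau,\theta)$ control: you invoke the elementary inequality $|\inf_\tau\sup_\theta f - \inf_\tau\sup_\theta g|\leq\sup_{\tau,\theta}|f-g|$, which holds for arbitrary $f,g$ and does not presuppose that the game has a value, whereas the paper instead picks $\epsilon$-optimal strategies after invoking $\inf_\tau\sup_\theta = \sup_\theta\inf_\tau$ (Lemma \ref{filtered:lemma_value_game}); your variant thus decouples the continuity proof from the value-of-game lemma, which is a small structural improvement. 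One point to tighten: ``standard continuous dependence on initial data'' gives $L^2$ (hence in-probability) convergence, not a.s. convergence as $(x,y)\to(x_0,y_0)$; to get the pointwise a.s. limit you should either pass to a subsequence (sufficient here since the limit $\Lambda(x_0,y_0)=0$ is deterministic) or, as the paper does, invoke the existence of a bicontinuous version of the flow (e.g. \cite[Theorem 13.8]{rogerswilliams_vol2}).
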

\begin{proof}
Let $( (x_n,y_n) )_{n \geq 1}$ be a sequence in $\cO$ converging to $(x,y) \in \cO$.
Fix $\epsilon > 0$ and let $\bar\theta$ so that
\begin{multline}\label{filtered:lemma_regularity_U:leq_bound}
U(x,y) =  \inf_{\tau} \sup_{\theta} M_{(x,y)}(\tau,\theta) =  \sup_{\theta}\inf_{\tau} M_{(x,y)}(\tau,\theta) \\
\leq \inf_{\tau} M_{(x,y)}(\tau,\bar\theta) { + \epsilon} \leq M_{(x,y)}(\tau,\bar\theta) { + \epsilon}
\end{multline}
for any $\tau$, where { second equality} holds by Lemma \ref{filtered:lemma_value_game}.
Analogously, let $\tau^n$ so that 
\begin{equation}\label{filtered:lemma_regularity_U:geq_bound}
U(x_n,y_n) =  \inf_{\tau} \sup_{\theta} M_{(x_n,y_n)}(\tau,\theta) \geq \sup_{\theta} M_{(x_n,y_n)}(\tau^n,\theta) - \epsilon \geq M_{(x_n,y_n)}(\tau^n,\theta) - \epsilon \quad \forall \theta.
\end{equation}
By taking the differences, we get
\begin{align*}
    U(x,y) & - U(x_n,y_n) \leq M_{(x,y)}(\tau^n,\bar{\theta}) - M_{(x_n,y_n)}(\tau^n,\bar{\theta}) { +2\epsilon} \\
    & = \E\left[\int_0^{\tau^n \land \bar{\theta}} e^{-\delta t} \left( c' (X^{x,y}_t) - c'(X^{x_n,y_n}_t)\right) dt \right] { +2\epsilon} \\
    & \leq \E\left[\int_0^\infty e^{-\delta t} \left\vert c' (X^{x,y}_t) - c'(X^{x_n,y_n}_t)\right\vert dt \right] { +2\epsilon}.
\end{align*}
By the same reasoning, exchanging the roles of $(x,y)$ and $(x_n,y_n)$ in \eqref{filtered:lemma_regularity_U:leq_bound} and \eqref{filtered:lemma_regularity_U:geq_bound}, we bound the absolute value of the difference by
\begin{align*}
    \vert U(x,y) & - U(x_n,y_n) \vert \leq \E\left[\int_0^\infty e^{-\delta t} \left\vert c' (X^{x,y}_t) - c'(X^{x_n,y_n}_t)\right\vert dt \right] { +2\epsilon }.
\end{align*}
Given that the coefficients of the pair $(X^0,\Pi)$ are smooth functions with bounded derivatives, the flow $(x,y) \mapsto (X^{x,y}_t,\Pi^{x,y}_t)$ is a diffeomorphism for every $t > 0$ by \cite[Theorem 13.8]{rogerswilliams_vol2}.
As $c'$ is continuous as well, we deduce that the integrand converges to $0$, $\P$-a.s., for any $t \geq 0$. 
By Lemma \ref{filtered:lemma:finiteness}, we invoke the dominated convergence theorem to get $\displaystyle\limsup_{n \to \infty}\vert U(x,y) - U(x_n,y_n) \vert \leq 2\epsilon$, with integrable majorant the function
\[
  \alpha_1\Big( 2 + \frac{2^{p-1}m_1}{\delta}+ 2^{p-1}\Big\vert e^{-\delta t} \int_0^t e^{\delta s} dB_s  \Big\vert^{p-1} + 2^{p-1} \Big\vert  e^{-\delta t}( \Bar{x} -\frac{m_1}{\delta} ) \Big\vert^{p-1} \Big).
\]
where we have set $\Bar{x} = \max_{n \geq 1}(x,x_n)$.
Since $\epsilon$ is arbitrary, we get point \ref{filtered:lemma_regularity_U:continuity}.

\smallskip
As for point \ref{filtered:lemma_regularity_U:monotonicity}, fix $y \in (0,1)$, let $x_1 \leq x_2$.
By equation \eqref{eq:explicit_representation}, it holds $X^{x_1,y}_t \leq X^{x_2,y}_t$, for any $t \geq 0$, $\P$-a.s. Since $c'$ is strictly increasing, $c'(X^{x_1,y}_t) \leq c'(X^{x_2,y}_t)$, for any $ t \geq 0$, $\P$-a.s., which implies that, for any $(\tau,\theta)$, $M_{(x_1,y)}(\tau,\theta) \leq M_{(x_2,y)}(\tau,\theta)$, and so the conclusion holds for $U(x,y)$ as well.
Fix now $x \in \R_+$, $0 \leq y_1 \leq y_2 \leq 1$. As \eqref{eq:filtering:filtered_dynamics} admits a unique strong solution, by \cite[Theorem 1.1]{ikeda1977comparison} the map $y \mapsto \Pi^{x,y}_t$ is non-decreasing for any $t \geq 0$, $\P$, a.s. Since $m_1 > m_2$ by assumption, the map $y \mapsto m(\Pi^{x,y}_t)$ is increasing as well, and therefore \eqref{eq:explicit_representation} implies that $y \mapsto X^{x,y}_t$ is increasing for every $ t \geq 0$ $\P$-a.s. Thanks to the monotonicity of $c'$, we get \ref{filtered:lemma_regularity_U:monotonicity}.
\end{proof}

We define the following functions
\begin{equation}\label{eq:filtered:barriers}
    a_-(y) \coloneqq \inf\{ x \in \R: \; U(x,y) \geq K_- \}, \quad a_+(y) \coloneqq \sup\{ x \in \R: \; U(x,y) \leq -K_+ \},
\end{equation}
with the conventions $\sup \emptyset = -\infty$, $\inf\emptyset = +\infty$.
Then, by continuity and monotonicity of $U$ and exploiting the bounds $-K_+ \leq U(x,y) \leq K_-$, the sets $\cS_\pm$ and $\cC$ defined in \eqref{eq:filtered:continuation_stopping1} can be expressed in terms of $a_\pm$ as
\begin{equation}\label{eq:filtered:regions_U}
\begin{aligned}
    & \cS_+ = \{ (x,y) \in \cO: \, x \leq a_+(y) \}, \quad \cS_- = \{ (x,y) \in \cO: \, x \geq a_-(y) \}, \\
    & \cC = \{ (x,y) \in \cO: \, a_+(y) < x < a_-(y) \}.
\end{aligned}
\end{equation}

\begin{lemma}\label{filtered:lemma:regularity_boundary}
\mbox{}
\begin{enumerate}[resume,label=(\roman*)]
    \item \label{filtered:lemma:regularity_boundary:monotonicity} The maps $a_\pm(y)$ are non-increasing. Moreover, $a_-$ is right-continuous and $a_+$ is left-continuous.
    \item \label{filtered:lemma:regularity_boundary:separated} For any $y \in (0,1)$, it holds $a_+(y) \leq (c')^{-1}(-K_+\delta) < (c')^{-1}(K_-\delta) \leq a_-(y)$.
    \item\label{filtered_regularity_boundary:bounded_barriers} There exist two finite values $\underline{a}_+$, $\overline{a}_-$ so that $-\infty < \underline{a}_+ \leq a_+(y)$ and $a_-(y) \leq \overline{a}_- < + \infty$.
\end{enumerate}
\end{lemma}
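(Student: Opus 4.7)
The plan is to prove (ii) and (iii) first; one-sided continuity in (i) then follows once finiteness is in hand. Monotonicity of $a_\pm$ is an immediate consequence of Lemma~\ref{filtered:lemma_regularity_U}\ref{filtered:lemma_regularity_U:monotonicity}: since $U(x, \cdot)$ is non-decreasing in $y$, the level sets $\{x : U(x, y) \geq K_-\}$ and $\{x : U(x, y) \leq -K_+\}$ are monotone in $y$, giving directly that $a_-$ and $a_+$ are non-increasing.

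For (ii), fix $x_0$ with $c'(x_0) > -K_+\delta$. By smoothness of $c'$ from Assumption~\ref{application:assumptions_cost}, choose $\eta > 0$ and $\gamma \in (0, K_+\delta]$ small enough that $c'(x) + K_+\delta \geq \gamma$ for $x \in [x_0 - \eta, x_0 + \eta]$. Consider the sup-player strategy $\sigma := \inf\{t \geq 0 : |X^{x_0, y}_t - x_0| \geq \eta\}$; by path continuity $\sigma > 0$ $\P$-a.s., hence $\E[1 - e^{-\delta \sigma}] > 0$. Using the identity $-K_+ e^{-\delta s} + K_+ = K_+\delta \int_0^s e^{-\delta t} dt$ and doing a case analysis on whether $\tau < \sigma$, $\tau = \sigma$, or $\tau > \sigma$, the pointwise lower bound $c'(X_t) + K_+\delta \geq \gamma$ for $t < \sigma$ yields $M(\tau, \sigma) + K_+ \geq \gamma/\delta$ on $\{\tau \leq \sigma\}$ and $M(\tau, \sigma) + K_+ \geq (\gamma/\delta)(1 - e^{-\delta \sigma})$ on $\{\tau > \sigma\}$. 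Taking expectations and using $\E[(1 - e^{-\delta \sigma})\ind_{\tau > \sigma}] \geq \E[1 - e^{-\delta \sigma}] - \P(\tau \leq \sigma)$ collapses both into the $\tau$-uniform lower bound $M(\tau, \sigma) + K_+ \geq (\gamma/\delta)\E[1 - e^{-\delta \sigma}] > 0$. Since $U = \sup_\theta \inf_\tau M$ by Lemma~\ref{filtered:lemma_value_game}, we conclude $U(x_0, y) > -K_+$, so $a_+(y) \leq x_0$; letting $x_0 \downarrow (c')^{-1}(-K_+\delta)$ closes the $a_+$ half. The analogous argument with the inf-strategy applied at $x_0$ with $c'(x_0) < K_-\delta$ yields $a_-(y) \geq (c')^{-1}(K_-\delta)$.

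For (iii), we show $a_-$ is uniformly bounded above; the lower bound for $a_+$ is symmetric (with $\tau \equiv +\infty$ replacing $\theta \equiv +\infty$). For the sup-strategy $\theta \equiv +\infty$ and any inf stopping time $\tau$,
\[
M(\tau, \infty) - K_- = \E\left[\int_0^\tau e^{-\delta t}\bigl(c'(X^{x,y}_t) - K_-\delta\bigr) dt\right].
\]
The aim is to show this right-hand side is non-negative for every $\tau$ whenever $x$ is sufficiently large, uniformly in $y \in (0, 1)$; this yields $U(x, y) \geq K_-$, hence $U(x,y) = K_-$ and $a_-(y) \leq x$. By the explicit representation in Lemma~\ref{filtered:lemma:finiteness} and the boundedness $m(\Pi_s) \in [m_2, m_1]$, the process $X^{x,y}$ is pathwise sandwiched, uniformly in $y$, between two linear Ornstein--Uhlenbeck diffusions whose decay from $x$ can be controlled explicitly. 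This provides uniform lower bounds on the hitting time $\tau_\star := \inf\{t : X^{x,y}_t \leq (c')^{-1}(K_-\delta)\}$ that grow like $\log(x)/\delta$ as $x \to \infty$. Splitting the expectation at $\tau_\star$, the contribution from $[0, \tau \wedge \tau_\star]$ is non-negative and of order $c'(x/2)/\delta$; the residual from $[\tau_\star, \tau]$ is controlled by combining the polynomial growth of $c'$ from Assumption~\ref{application:assumptions_cost} with the moment estimate of Lemma~\ref{filtered:lemma:finiteness}, attenuated by the factor $e^{-\delta \tau_\star}$ of order $1/x$. For $\overline{a}_-$ large enough the first contribution dominates, uniformly in $y$.

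Returning to the one-sided continuity in (i): for $y_n \downarrow y$, the sequence $a_-(y_n)$ is non-decreasing and bounded above by $a_-(y) \leq \overline{a}_-$, so it converges to some finite $L \leq a_-(y)$; by the definition of $a_-$ and the continuity of $U$ in Lemma~\ref{filtered:lemma_regularity_U}\ref{filtered:lemma_regularity_U:continuity}, $U(a_-(y_n), y_n) = K_-$, and passing to the limit gives $U(L, y) = K_-$, whence $a_-(y) \leq L$ and equality follows. Left-continuity of $a_+$ is symmetric. The main technical obstacle is the uniform-in-$y$ control in part (iii): one must quantify, independently of $y \in (0, 1)$, both the decay time of $X^{x, y}_t$ from a large initial value and the residual negative mass of $c'(X_t) - K_-\delta$ after mean-reversion. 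The key simplification is that the unknown filter $\Pi_t$ enters the dynamics only through the bounded drift $m(\Pi_t) \in [m_2, m_1]$, enabling the uniform sandwich by two explicitly solvable linear diffusions.
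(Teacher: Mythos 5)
Items (i) and (ii) are handled correctly. Your proof of (ii) takes a genuinely different route from the paper's and is valid: the paper invokes the sub/super-martingale (semi-harmonic) characterization of $U$ to differentiate the stopping-region constraint at $t=0$ and obtain the necessary condition $c'(x_o)\geq K_-\delta$ on $\cS_-$ (and $c'(x_o)\leq -K_+\delta$ on $\cS_+$) directly, whereas you hand the sup-player an explicit exit-time strategy $\sigma$ and extract a uniform-in-$\tau$ lower bound $M_{(x_0,y)}(\tau,\sigma)+K_+ \geq (\gamma/\delta)\,\E[1-e^{-\delta\sigma}]>0$ (the restriction $\gamma\leq K_+\delta$ is what makes the $\{\tau<\sigma\}$ branch close). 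Both are sound; the martingale route is shorter once Peskir's theorem is quoted, your strategy construction is more elementary.

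Item (iii) has a genuine gap. You want $\E[\int_0^\tau e^{-\delta t}(c'(X^{x,y}_t)-K_-\delta)\,dt]\geq 0$ for every $\tau$, uniformly in $y$, when $x$ is large, by splitting at $\tau_\star := \inf\{t: X^{x,y}_t \leq (c')^{-1}(K_-\delta)\}$. But the two pieces of the split do \emph{not} compare pointwise: on the low-probability event where $\tau_\star$ is small, the positive accumulation $\int_0^{\tau_\star}e^{-\delta t}(c'(X_t)-K_-\delta)\,dt$ is near zero while the factor $e^{-\delta\tau_\star}$ attenuating the residual is near one. The heuristic ``$\tau_\star\sim\log(x)/\delta$, so $e^{-\delta\tau_\star}\sim 1/x$'' describes the typical path, not an expectation bound; to close the argument you would need quantitative tail estimates on $\P(\tau_\star\leq T)$ (controlling the stochastic integral $\sigma e^{-\delta t}\int_0^t e^{\delta s}\,dB_s$) and a matching quantitative lower bound on the positive mass, all uniform in $y$ and $\tau$ --- substantially more work than you indicate. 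The paper sidesteps this entirely: it sandwiches $\underline{X}^x\leq X^{x,y}\leq\overline{X}^x$ between one-dimensional OU processes with constant drifts $m_2,m_1$, passes to the values $\underline{u}(x)\leq U(x,y)\leq\overline{u}(x)$ of the corresponding one-dimensional Dynkin games (so that $a_+(y)\geq\underline{a}_+$ and $a_-(y)\leq\overline{a}_-$ with $y$-free boundaries), and then shows, e.g., $\underline{a}_+>-\infty$ by a soft contradiction: if $\{\underline{u}\leq -K_+\}$ were empty, the sup-player would never stop, forcing $-K_+<\underline{u}(x)\leq\E[\int_0^\infty e^{-\delta t}c'(\underline{X}^x_t)\,dt]$, whose right side diverges to $-\infty$ as $x\to-\infty$ by Assumption~\ref{application:assumptions_cost} and monotone convergence. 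No hitting-time estimates are required, and the uniformity in $y$ comes for free from the comparison.
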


\begin{proof}
Point \ref{filtered:lemma:regularity_boundary:monotonicity} follows from definition of $a_\pm$ and the monotonicity and continuity of $U$, ensured by Lemma \ref{filtered:lemma_regularity_U}.
As for point \ref{filtered:lemma:regularity_boundary:separated}, consider the processes
\begin{align}
    & \Big( e^{-\delta (t \land \theta^*)}U(X^{x,y}_{t \land \theta^*},\Pi^{x,y}_{t \land \theta^*}) + \int_0^{t\land \theta^*}e^{-\delta s}c'(X^{x,y}_s)ds \Big)_{t \geq 0},  \label{eq:lemma_regularity:subm} \\
    & \Big( e^{-\delta (t \land \tau^*)}U(X^{x,y}_{t \land \tau^*},\Pi^{x,y}_{t \land \tau^*}) + \int_0^{t\land \tau^*}e^{-\delta s}c'(X^{x,y}_s)ds \Big)_{t \geq 0}, \label{eq:lemma_regularity:supm}
\end{align}
which, by \cite[Theorem 2.1]{peskir2008optimal_stopping_games}, are a sub-martingale and super-martingale respectively.
Let now $(x_o,y_o) \in \cS_-$.
By using the sub-martingale property of the process \eqref{eq:lemma_regularity:subm} and the bound $U(x,y) \leq K_-$, we deduce
\begin{multline*}
K_- = U(x_o,y_o) \leq \E\left[ e^{-\delta (t \land \theta^*)}U(X^{x_o,y_o}_{t \land \theta^*},\Pi^{x_o,y_o}_{t \land \theta^*}) + \int_0^{t\land \theta^*}e^{-\delta s}c'(X^{x_o,y_o}_s)ds \right] \\
{ \leq } \E\left[ K_- e^{-\delta(t \land \theta^*)} + \int_0^{t\land \theta^*}e^{-\delta s}c'(X^{x_o,y_o}_s)ds  \right] \leq K_- + \E\left[  \int_0^{t\land \theta^*}e^{-\delta s}(c'(X^{x_o,y_o}_s) -K_-\delta)ds  \right]
\end{multline*}
for any $t \geq 0$. This implies
\begin{equation}\label{lemma:regularity_boundary:sign_S_-}
    0 \leq \lim_{t \to 0}\frac{1}{t}\E\left[ \int_0^{t}e^{-\delta s}(c'(X^{x_o,y_o}_s) -K_-\delta)\ind_{s < \theta^*} ds \right] = c'(x_o) -K_-\delta, \quad \forall (x_o,y_o) \in \cS_-,
\end{equation}
where we used dominated convergence theorem, by using as majorant
\[
K_-\delta + \alpha_2+ \alpha_2\sup_{0 \leq s \leq 1} e^{-\delta s} \vert X^{x_o,y_o}_s \vert^{p-1},
\]
which is clearly integrable.
Let $(c')^{-1}$ be the inverse of $c'$, which exists as $c$ is strictly convex by Assumption \ref{application:assumptions_cost}.
Then, \eqref{lemma:regularity_boundary:sign_S_-} implies
\[
a_-(y) \geq \inf\{ x \in \R: c'(x) \geq K_-\delta \} = \inf\{ x \in \R: x \geq (c')^{-1} (K_-\delta) \} = (c')^{-1}(K_-\delta).
\]
Analogously, for any $(x_o,y_o) \in \cS_+$, by relying on the super-martingale property of the process \eqref{eq:lemma_regularity:supm}, we deduce 
\begin{equation}\label{lemma:regularity_boundary:sign_S_+}
    0 \geq c'(x_o) { +K_+\delta}, \quad \forall (x_o,y_o) \in \cS_+,
\end{equation}
which in turns implies ${ a_+(y) \leq} (c')^{-1}(-\delta K_+)$.
Moreover, by using again the strict convexity of $c$, we deduce $ (c')^{-1}(-\delta K_+) < (c')^{-1}(\delta K_-) $.
This proves point \ref{filtered:lemma:regularity_boundary:separated}.

\smallskip
To prove point \ref{filtered_regularity_boundary:bounded_barriers}, let $\underline{X}$ and $\overline{X}$ be, respectively, the solutions of
\begin{align}
    & d\underline{X}^{x}_t = (m_2 -\delta\underline{X}^{x}_t )dt + \sigma dB_t, \quad \underline{X}^{x}_0 = x, \label{eq:filtered:lower_X} \\
    & d\overline{X}^{x}_t = (m_1 -\delta\overline{X}^{x}_t )dt + \sigma dB_t, \quad \overline{X}^{x}_0 = x, \label{eq:filtered:upper_X}
\end{align}
and notice that, since $m_2 \leq m(\Pi_t) \leq m_1$, by \cite[Theorem 1.1]{ikeda1977comparison} we have $\underline{X}^{x}_t \leq X^{x,y}_t \leq \overline{X}^{x}_t$ for every $t \geq 0$ $\P$-a.s. for any $(x,y) \in \cO$.
Consider the following Dynkin games:
\begin{align*}
    & \underline{u}(x) \coloneqq \inf_{\tau} \sup_ {\theta} \E\left[\int_0^{\tau \land \theta} e^{-\delta t} c' (\underline{X}^x_t)dt +K_- e^{-\delta \tau}\ind_{\tau < \theta} - K_+ e^{-\delta \theta}\ind_{\theta < \tau} \right], \\
    & \overline{u}(x) \coloneqq \inf_{\tau} \sup_ {\theta} \E\left[\int_0^{\tau \land \theta} e^{-\delta t} c' (\overline{X}^x_t)dt +K_- e^{-\delta \tau}\ind_{\tau < \theta} - K_+ e^{-\delta \theta}\ind_{\theta < \tau} \right].
\end{align*}
By the same reasoning as in Lemmata \ref{filtered:lemma_value_game} and \ref{filtered:lemma_regularity_U}, it is possible to prove that $\underline{u}$ and $\overline{u}$ are continuous and non-decreasing.
Moreover, there exist two saddle points $(\underline{\tau},\underline{\theta})$ and  $(\overline{\tau},\overline{\theta})$ for $\underline{u}$ and $\overline{u}$, respectively, given by
\begin{align*}
    & \underline{\tau} \coloneqq \inf\{ t \geq 0: \underline{u}(\underline{X}_t) \geq K_- \}, \quad \underline{\theta} \coloneqq \inf\{ t \geq 0: \underline{u}(\underline{X}_t) \leq -K_+ \}, \\
    & \overline{\tau} \coloneqq \inf\{ t \geq 0: \overline{u}(\overline{X}_t) \geq K_- \}, \quad \overline{\theta} \coloneqq \inf\{ t \geq 0: \overline{u}(\overline{X}_t) \leq -K_+ \}.
\end{align*}
We focus on $\underline{\tau}$ and $\overline{\theta}$.
As for $U(x,y)$, set
\[
\underline{a}_+ \coloneqq \sup\{x \in \R: { \overline{u}(x)} \leq -K_+ \}, \quad \overline{a}_- \coloneqq \inf\{x \in \R: { \underline{u}(x)} \geq K_- \},
\]
so that ${ \underline{\tau}} = \inf\{t \geq 0: { \underline{X}_t }\geq \overline{a}_- \}$ and ${ \overline{\theta}} = \inf\{t \geq 0: { \overline{X}_t }\leq \underline{a}_+ \}$.
Notice that, since $c'$ is strictly increasing, it holds $\underline{u}(x) \leq U(x,y) \leq \overline{u}(x)$ for all $(x,y) \in \cO$, which implies that
\[
a_+(y) = \sup\{x \in \R : U(x,y) \leq -K_+ \} \geq \sup\{x \in \R : { \overline{u}(x)} \leq -K_+ \} = \underline{a}_+.
\]
To prove $\underline{a}_+ > -\infty$, we show $\{x \in \R:\, { \overline{u}(x)} \leq -K_+ \}$ is nonempty.
Suppose not.
Then, for any $(x,y) \in \cO$, it holds $-K_+ < { \overline{u}(x)}$, which implies that ${ \overline{\theta}} = \infty$ $\P$-a.s.
Therefore, 
\begin{multline*}
    -K_+ < { \overline{u}(x)} = \inf_{\tau}\E\left[\int_0^{\tau} e^{-\delta t} c' ({ \overline{X}^x_t})dt {  + K_- e^{-\delta \tau}\ind_{\tau < \infty} } \right] \\
    \leq \E\left[\int_0^\infty e^{-\delta t} c' (\overline{X}^x_t)dt \right] = \E\left[\int_0^\infty e^{-\delta t} c' ({ e^{-\delta t} x} + \overline{X}^0_t)dt \right],
\end{multline*}
for all $x \in \R$, where $\overline{X}^0$ denotes the solution of \eqref{eq:filtered:upper_X} starting from $x=0$ at $t = 0$.
As $c'$ is strictly increasing and $\lim_{x \to -\infty} c'(x)=-\infty$ { by Assumption \ref{application:assumptions_cost}}, the monotone convergence theorem yields
\begin{multline*}
    -K_+ < \lim_{x \to -\infty} \E\left[\int_0^\infty e^{-\delta t} c' ( { e^{-\delta t} } x + \overline{X}^0_t)dt \right] \\
    = \E\left[\int_0^\infty e^{-\delta t} \lim_{x \to -\infty}\left( c' ( { e^{-\delta t} } x + \overline{X}^0_t) \right) dt \right] = -\infty,
\end{multline*}
thus getting a contradiction.
The proof of $\overline{a}_- < +\infty$ is dealt analogously.
\end{proof}
Observe that Lemma \ref{filtered:lemma:regularity_boundary} implies that points (I) and (II) in Hypothesis \ref{conj:main} are satisfied by $a_\pm(y)$.
We now show that $U$ solves the free-boundary problem \eqref{eq:U_var_ineq} and that $U \in \dC^2(\cC)$, thus showing that point (III) in Hypothesis \eqref{conj:main} holds true.
On top of those properties, we will also prove that $U \in \dC^1(\cO)$, making an important step towards the application of Theorem \ref{thm:HJB_sol_weak}.

\smallskip
We notice that the generator $\cL_{(X^0,\Pi)}$ is degenerate, as a result of the fact that the diffusions $X^0$ and $\Pi$ are run by the same Brownian motion $B$.
Thus, classical PDE interior results based on Schauder's estimates do not hold.
Instead, we will show that $\mathcal{L}_{(X^0,\Pi)}$ is hypoelliptic (see, e.g., \cite[Section 2.3]{nualart2006malliavin}), allowing us to deduce the regularity of $U$ in the open set $\cC$ from \cite[Corollary 7]{peskir2025weak}.
This is accomplished in the following way: By a proper change of variables, we identify a diffeomorphic parabolic differential operator $\cL_{(X^0,Z)}$, defined as the generator of a diffusion $(X^0,Z)$ whose second component $Z$ is a process of finite variation.
Then, borrowing ideas from Ernst and Peskir in \cite{ernst2024gapeev} (see also \cite{peskir2024detection_multiple,peskir2024detection_ou}), we verify that $\cL_{(X^0,Z)}$ satisfies H\"{o}rmander's condition, ensuring the hypoellipticity of the operator $\cL_{(X^0,\Pi)}$ itself.

To show that $U \in \dC^1(\cO)$ (global smooth-fit property), in Lemma \ref{filtered:lemma:feller} we rely on the parabolic H\"{o}rmander's condition to show that $(X^0,\Pi)$ is strong Feller, similarly to \cite[Proposition 4]{peskir2024detection_multiple}.
Then, we prove in { Lemma} \ref{filtered:lemma:probabilistic_regularity} that the boundary points of $\cS_\pm$ are probabilistically regular for their complement sets relatively to $(X^0,\Pi)$.
Thanks to these fine technical results, we are able to prove in Theorem \ref{filtered:thm:regularity_up_to_boundary} that the $\dC^1$-regularity of $U$ extends to the boundary of $\cC$.

\smallskip
In order to prove the global smooth-fit property in Theorem \ref{filtered:thm:regularity_up_to_boundary}, we will need the following technical assumption on the parameters, which will be in force throughout the rest of the section:

\begin{assumption}\label{application:assumptions_discount}
Consider $p \geq 2$ given in Assumption \ref{application:assumptions_cost}.
The parameters satisfy the following relation: if $p>2$, then 
\begin{equation}\label{filtered:assumption_delta:p_not_2}
\delta > \left(\gamma^2  - (\lambda_1+\lambda_2)\right) \vee \left(6\gamma^2  - 2(\lambda_1+\lambda_2)\right) \vee \Big( 2(\frac{p-1}{p})(2p-3)\gamma^2 - 2\frac{p-1}{p}(\lambda_1 + \lambda_2) \Big) \vee 0.
\end{equation}
If $p=2$, then
\begin{equation}\label{filtered:assumption_delta:p_equal_2}
\delta > \left(\gamma^2  - (\lambda_1+\lambda_2)\right) \vee 0.
\end{equation}
\end{assumption}
Assumption \ref{application:assumptions_discount} ensures that the discount factor in the auxiliary Dynkin game is large enough to ensure uniform estimates on the the first order derivatives.

\smallskip
The following Lemma provides the change of variable which transforms $(X^0,\Pi)$ into the process $(X^0,Z)$, with $Z$ of finite variation.
As the proof is a straightforward application of It\^{o}'s formula, we omit it.

\begin{lemma}\label{filtered:lemma:change_of_variables}
For any $(x,y) \in \cO$, consider the process
\begin{equation}\label{eq:filtered:definition_z}
    Z_t \coloneqq \frac{\sigma}{\gamma}\log\left( \frac{\Pi_t}{1-\Pi_t} \right) - X^0_t.
\end{equation}
Then, the pair $(X^0,Z)$ satisfies the equation
\begin{equation}\label{eq:filtering:filtered_dynamics_change_of_variables}
\left\{ \begin{aligned}
    d X^0_t & =  \mu(X^0_t,Z_t)dt + \sigma dB_t, && X^0_0 = x, \\
    dZ_t & = q(X^0_t,Z_t) dt, && Z_0 = z = \frac{\sigma}{\gamma}\log\left(\frac{y}{1-y}\right) -x,
\end{aligned}\right.
\end{equation}
where $(\mu,q):\R^2 \to \R$ are defined by
\begin{equation}\label{eq:filtered:coefficient_X0_Z}
\begin{aligned}
\mu(x,z) & = m\left(\frac{e^{\frac{\gamma}{\sigma}(z + x)}}{1+e^{\frac{\gamma}{\sigma}(z + x)}}\right)  -\delta x , \\
q(x,z) & = \sigma\gamma \left(\frac{e^{\frac{\gamma}{\sigma}(z + x)}}{1+e^{\frac{\gamma}{\sigma}(z + x)} }- \frac{1}{2} \right) + \frac{\sigma}{\gamma}(1+e^{-\frac{\gamma}{\sigma}(z + x)})(\lambda_2 - \lambda_1 e^{\frac{\gamma}{\sigma}(z + x)}) \\
& \;\; - m\left(\frac{e^{\frac{\gamma}{\sigma}(z + x)}}{1+e^{\frac{\gamma}{\sigma}(z + x)}}\right)  +\delta x.
\end{aligned}
\end{equation}
\end{lemma}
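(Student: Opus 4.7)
The plan is a direct application of It\^{o}'s formula to a logit-type transform of $\Pi$. First, I would introduce the $\dC^\infty$-diffeomorphism $\phi\colon (0,1)\to\R$ defined by $\phi(y) \coloneqq \frac{\sigma}{\gamma}\log\!\bigl(\tfrac{y}{1-y}\bigr)$, whose derivatives are $\phi'(y) = \sigma/[\gamma\,y(1-y)]$ and $\phi''(y) = \sigma(2y-1)/[\gamma\,y^2(1-y)^2]$. The crucial algebraic observation is $\phi'(y)\cdot \gamma\,y(1-y) = \sigma$: the function $\phi$ is designed precisely so that, once It\^{o}'s formula is applied to $\phi(\Pi_t)$ using the $\Pi$-dynamics in~\eqref{eq:filtering:filtered_dynamics}, the local-martingale part of $d\phi(\Pi_t)$ equals $\sigma\,dB_t$, matching the diffusion coefficient of $dX^0_t$.

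Collecting the drift contributions yields
\[
d\phi(\Pi_t) = \left[\frac{\sigma\bigl(\lambda_2 - (\lambda_1+\lambda_2)\Pi_t\bigr)}{\gamma\,\Pi_t(1-\Pi_t)} + \frac{\sigma\gamma(2\Pi_t-1)}{2}\right]\!dt + \sigma\,dB_t,
\]
so that $dZ_t = d\phi(\Pi_t) - dX^0_t$ has vanishing diffusion coefficient; hence $Z$ is of pathwise finite variation. The initial condition $Z_0 = \frac{\sigma}{\gamma}\log(y/(1-y))-x$ is immediate from the definition~\eqref{eq:filtered:definition_z}, and well-posedness of the joint system~\eqref{eq:filtering:filtered_dynamics_change_of_variables} is inherited from that of $(X^0,\Pi)$ since $\phi$ is a diffeomorphism of $(0,1)$ onto $\R$.

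Finally, I would invert the identity $\phi(\Pi_t) = Z_t + X^0_t$ to obtain $\Pi_t = e^{u_t}/(1+e^{u_t})$ with $u_t \coloneqq \frac{\gamma}{\sigma}(Z_t + X^0_t)$, and substitute in the drift to match the forms of $\mu$ and $q$ in~\eqref{eq:filtered:coefficient_X0_Z}. The formula for $\mu$ follows at once. The only nontrivial manipulation is the simplification of $[\lambda_2 - (\lambda_1+\lambda_2)\Pi_t]/[\Pi_t(1-\Pi_t)]$: writing this in terms of $e^{\pm u_t}$ and factoring gives $(1+e^{-u_t})(\lambda_2 - \lambda_1 e^{u_t})$, which is exactly the second summand of $q$; the first summand comes from the It\^{o} correction $\tfrac{1}{2}\phi''(\Pi_t)\gamma^2\Pi_t^2(1-\Pi_t)^2 = \sigma\gamma(\Pi_t - \tfrac{1}{2})$, and the last two summands come from subtracting the drift $m(\Pi_t)-\delta X^0_t$ of $X^0$. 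No genuine obstacle arises; the argument is purely computational, with the main subtlety being the recognition that the logit scaling is exactly the one that kills the diffusion of $Z$.
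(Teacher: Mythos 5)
Your proposal is correct and is precisely the ``straightforward application of It\^{o}'s formula'' that the paper invokes while omitting the proof; the choice of $\phi(y)=\tfrac{\sigma}{\gamma}\log(y/(1-y))$ to cancel the diffusion of $Z$, the It\^{o} correction $\tfrac{1}{2}\phi''(\Pi_t)\gamma^2\Pi_t^2(1-\Pi_t)^2=\sigma\gamma(\Pi_t-\tfrac12)$, the factorization $[\lambda_2-(\lambda_1+\lambda_2)\Pi_t]/[\Pi_t(1-\Pi_t)]=(1+e^{-u_t})(\lambda_2-\lambda_1 e^{u_t})$, and the subtraction of the $X^0$-drift all check out and reproduce \eqref{eq:filtered:coefficient_X0_Z} exactly.
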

The infinitesimal generator $\cL_{( X^0, Z)}$ of $(X^0,Z)$ is of parabolic type, and it is given by
\begin{equation}\label{eq:filtered:generator_change_of_variables}
\mathcal{L}_{( X^0,Z)}f(x,z) = q(x,z) f_z(x,z) + \frac{1}{2}\sigma^2 f_{xx}(x,z) + \mu(x,z) f_x (x,z),
\end{equation}
for any $f \in \dC^{2,1}(\R^2)$.
Observe that the operators $\cL_{(X^0,\Pi)}$ and $\cL_{(X^0,Z)}$ are $\dC^\infty$-diffeomorphic in $\cO$.
Indeed, consider the diffeomorphism $\Psi: \R \times \R \to \R \times (0,1)$
\begin{equation}\label{eq:filtered:diffeomorphism}
    (x,y) = \Psi\left((x,z)\right) = \left( x, \frac{e^{\frac{\gamma}{\sigma}(z + x) }}{ 1+ e^{\frac{\gamma}{\sigma}(z + x)}}\right),
\end{equation}
whose inverse $\Psi^{-1}:\R \times (0,1) \to \R \times \R$ is given by
\[    (x,z) = \Psi^{-1}\left((x,y)\right) = \left(x,\frac{\sigma}{\gamma}\log\Big(\frac{y}{1-y}\Big) -x \right).
\]
Occasionally, we will write $(x,y(x,z))$ instead of $\Psi(x,z)$ and $(x,z(x,y))$ instead of $\Psi^{-1}(x,y)$.
By construction, $\Psi$ provides the $\dC^\infty$-diffeomorphism between $\cL_{(X^0,\Pi)}$ and $\cL_{(X^0,Z)}$.

\begin{remark}
For later use, we define the likelihood ratio process $\Phi = (\Phi_t)_{t \geq 0}$ as $\Phi_t = \frac{\Pi_t}{1-\Pi_t}$.
By It\^{o}'s formula, the transformed process $(X^0,\Phi)$ satisfies the following equations:
\begin{equation}\label{eq:filtered:likelihood_ratio}
\left\{ \begin{aligned}
dX^0_t & =  \left[ m\left(\frac{\Phi_t}{1 + \Phi_t}\right) -\delta X^0_t \right]dt + \sigma dB_t, \quad X^0_0 = x, \\
d\Phi_t & = \left[ (1+\Phi_t)(\lambda_2 - \lambda_1 \Phi_t) + \gamma^2 \frac{\Phi_t^2}{1+\Phi_t}\right]dt + \gamma\Phi_tdB_t, \quad \Phi_0 = \phi = \frac{y}{1-y}.
\end{aligned} \right.
\end{equation}
Notice that the map $y \mapsto \phi(y) \coloneqq \frac{y}{1-y}$ is a diffemorphism from $(0,1)$ to $\R_+$, with inverse $y(\phi) = \frac{\phi}{1+\phi}$.
In particular, the process $Z_t$ can be defined starting from $(X^0,\Phi)$ by setting
\[
Z_t = \frac{\sigma}{\gamma}\log\left(\Phi_t\right) - X^0_t.
\]
This equivalent representation will be used extensively in the following.
\end{remark}

\begin{lemma}\label{filtered:lemma:weak_solution}
$U \in \dC^2(\cC)$ and it satisfies \eqref{eq:U_var_ineq}.
\end{lemma}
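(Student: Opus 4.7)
The plan is to split the argument into two parts: first, to establish the boundary identities in \eqref{eq:U_var_ineq} and the interior PDE on $\cC$ in the distributional sense via the martingale structure of the Dynkin game; second, to upgrade the interior weak identity to a classical $\dC^2$ identity by a hypoellipticity argument applied to the generator, after a change of coordinates in which it takes a parabolic form. Throughout, note that in the present setting $\sigma$ is constant, hence $(\widehat X^0,\widehat Y)=(X^0,\Pi)$, and $b_x(x,y)\equiv -\delta$, so that the interior PDE in \eqref{eq:U_var_ineq} reads $\cL_{(X^0,\Pi)} U + c'(x) - \delta U = 0$ on $\cC$.

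The boundary identities $U = -K_+$ on $\cS_+$ and $U = K_-$ on $\cS_-$ are immediate from the definitions \eqref{eq:filtered:continuation_stopping1}, the continuity of $U$ (Lemma \ref{filtered:lemma_regularity_U}), and the saddle-point characterization in Lemma \ref{filtered:lemma_value_game} (so that $\theta^\star = 0$ on $\cS_+$ and $\tau^\star = 0$ on $\cS_-$). For the interior identity, I would exploit the sub/super-martingale structure of the processes in \eqref{eq:lemma_regularity:subm}--\eqref{eq:lemma_regularity:supm} (see \cite[Theorem 2.1]{peskir2008optimal_stopping_games}): on $\cC$ both processes are local martingales up to the first exit from any ball $B \subset \cC$, and a standard argument based on Dynkin's formula applied to smooth compactly supported test functions yields that $U$ solves $\cL_{(X^0,\Pi)} U + c'(x) - \delta U = 0$ on $\cC$ in the distributional sense. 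The obstacle at this point is that $\cL_{(X^0,\Pi)}$ is a degenerate second-order operator (both $X^0$ and $\Pi$ are driven by the same Brownian motion $B$), so classical elliptic interior regularity cannot be invoked to promote this weak solution to a classical one.

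To bypass this, I would apply the $\dC^\infty$-diffeomorphism $\Psi$ from Lemma \ref{filtered:lemma:change_of_variables} and pass to $\tilde U := U \circ \Psi$, which solves $\cL_{(X^0,Z)} \tilde U + c'(x) - \delta \tilde U = 0$ on $\tilde\cC := \Psi^{-1}(\cC)$ in the distributional sense, with $\cL_{(X^0,Z)} = V_0 + \tfrac12 V_1^2$, $V_0 = \mu\partial_x + q\partial_z$, $V_1 = \sigma\partial_x$ (cf.~\eqref{eq:filtered:generator_change_of_variables}). I would then verify H\"ormander's condition by computing iterated Lie brackets: the first bracket is $[V_0,V_1] = -\sigma \mu_x \partial_x - \sigma q_x \partial_z$, and together with $V_1$ it spans $\R^2$ wherever $q_x \neq 0$.

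The hard part is precisely this last step. A direct computation shows that $q_x(x,z)$ may vanish along a smooth one-dimensional set in the $(x,z)$-plane, so on that set $\{V_1,[V_0,V_1]\}$ fails to span $\R^2$, and one must pass to higher-order brackets $[V_1,[V_0,V_1]]$, $[V_0,[V_0,V_1]]$, etc., following the strategy of Ernst--Peskir in \cite{ernst2024gapeev,peskir2024detection_multiple,peskir2024detection_ou}, and verify that the Lie algebra generated by $V_0,V_1$ spans $\R^2$ at every point of $\R^2$. Once H\"ormander's condition holds, $\cL_{(X^0,Z)}$ is hypoelliptic, and \cite[Corollary 7]{peskir2025weak} (together with $c \in \dC^\infty$ from Assumption \ref{application:assumptions_cost}) promotes $\tilde U$ to a classical (in fact $\dC^\infty$) solution of the transformed PDE on $\tilde\cC$. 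Pulling back via $\Psi$ yields $U \in \dC^2(\cC)$ and the pointwise interior identity, which, combined with the boundary identities, gives \eqref{eq:U_var_ineq}.
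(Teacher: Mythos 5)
Your overall structure matches the paper's: boundary identities by definition and continuity, interior distributional PDE, then hypoellipticity via the change of coordinates $(X^0,Z)$ to upgrade to classical regularity. Two points of divergence, one cosmetic and one substantial.

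On the distributional PDE, you propose deriving $\cL_{(X^0,\Pi)}U + c' - \delta U = 0$ on $\cC$ from the local-martingale property of the processes in \eqref{eq:lemma_regularity:subm}--\eqref{eq:lemma_regularity:supm} plus Dynkin's formula against smooth test functions. The paper instead applies \cite[Corollary 5]{peskir2025weak} directly, after checking its Assumption (3.5) via the diffeomorphism property of the flow \cite[Theorem 13.8]{rogerswilliams_vol2}. Your route is plausible and classical, but as written it is a sketch — you would still need to justify passing from the martingale property to the distributional equation (two-sided Dynkin arguments and an integration by parts), so it is not obviously shorter than just citing Peskir.

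The substantive gap is in the H\"ormander verification, and you flag it yourself: you compute $[V_0,V_1]$, observe that $V_1$ and $[V_0,V_1]$ span $\R^2$ wherever $q_x\neq 0$, note that $q_x$ may vanish, say one should pass to higher brackets $[V_1,[V_0,V_1]]$, $[V_0,[V_0,V_1]]$, ``etc.,'' and then stop, declaring this ``the hard part.'' This is the crux of the lemma and cannot be left open. The paper's resolution is worth internalizing because it avoids ever computing brackets that involve $V_0$ more than once. Since $D_1$ (your $V_1$) is a constant vector field purely in $\partial_x$, the iterated brackets $[[\dots[D_0,D_1],\dots],D_1]$ produce $\partial_x$-derivatives of the coefficients: the $z$-components are precisely $(\text{const})\cdot q_x^{(n)}$. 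Hence H\"ormander fails at $(x_0,z_0)$ only if \emph{simultaneously} $q(x_0,z_0)=0$ (so $D_0$ has no $\partial_z$-component there) and $q^{(n)}_x(x_0,z_0)=0$ for every $n\geq 1$. Because $q(\cdot,z_0)$ is real-analytic, this forces $q(\cdot,z_0)\equiv 0$, i.e.\ the section $z_0$ is a ``trap curve'': the $Z$-component of the diffusion started on $\{z=z_0\}$ stays at $z_0$ forever. Translating this constraint into the $(X^0,\Phi)$ coordinates via $\log\Phi_t=\tfrac{\gamma}{\sigma}(X^0_t+z_0)$ and applying It\^o's formula produces an algebraic identity in $\phi>0$ that manifestly cannot hold for all $\phi$ — a contradiction. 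This is the Ernst--Peskir ``no trap curve'' argument. Also note you drop the case $q(x_0,z_0)\neq 0$, which alone already gives the span (since $D_0$ and $D_1$ then span), so the dichotomy you need is ``$q\neq 0$ or $q^{(n)}_x\neq 0$ for some $n$,'' not ``$q_x\neq 0$.''

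Once the H\"ormander condition is in place, your conclusion is correct: $\cL_{(X^0,Z)}$ is hypoelliptic, so is $\cL_{(X^0,\Pi)}$ by the diffeomorphism, and \cite[Corollary 7]{peskir2025weak} together with $c'\in\dC^\infty$ upgrades $U$ to $\dC^\infty(\cC)$.
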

\begin{proof}
$U(x,y)$ satisfies the constraints on $\cS_\pm$ by definition.
We deal now with the behavior of $U$ in $\cC$.

We recall that the flow $(x,y) \mapsto (X^{x,y}_t,\Pi^{x,y}_t)$ is a diffeomorphism for every $t > 0$ by \cite[Theorem 13.8]{rogerswilliams_vol2}.
Then, for any $f \in \dC^\infty_c(\cO)$ and $t > 0$, the map $(x,y) \mapsto \E[f(X^{x,y}_t,\Pi^{x,y}_t)]$ is $\dC^2_b(\cO)$.
Therefore, Assumption (3.5) in \cite{peskir2025weak} is satisfied, which ensures by Corollary 5 therein that $U$ satisfies
\begin{equation}\label{eq:filtered:unconstrained_PDE}
\cL_{(X^0,\Pi)}U(x,y) - \delta U(x,y) + c'(x) = 0
\end{equation}
inside $\cC$, where the derivatives appearing in \eqref{eq:filtered:unconstrained_PDE} are to be understood in the sense of Schwartz distribution.
To improve the regularity of $U$ and to show that $U$ satisfies \eqref{eq:filtered:unconstrained_PDE} in the classical sense in $\cC$, we show that the operator $\cL_{( X^0,Z)}$ defined by \eqref{eq:filtered:generator_change_of_variables} is hypoelliptic over $\R^2$. As $\cL_{( X^0,\Pi)}$ and $\cL_{( X^0,Z)}$ are $\dC^\infty$-diffeomorphic on $\cO$, this implies that $\cL_{( X^0,\Pi)}$ is hypoelliptic over $\cO$.
{ Given that $c' \in \dC^\infty(\R)$, \cite[Corollary 7]{peskir2025weak} yields that the solution $U \in \dC^\infty(\cC)$} and thus concludes the proof.

\smallskip
We write $\cL_{(X^0,Z)}$ in quadratic form as $\cL_{(X^0,Z)} = D_0 + D_1^2$, where
\begin{equation}\label{eq:lemma:weak_solution:quadratic_form}
    D_0 = \mu(x,z) \partial_x + q(x,z) \partial_z, \quad D_1 = \frac{\sigma }{\sqrt{2}} \partial_x.
\end{equation}
We identify each first-order operator with the vector of its coefficients.
For any $(x,z) \in \R^2$, consider the Lie algebra $Lie(D_0,D_1)(x,z)$, that is the linear subspace generated by $D_0$ and $D_1$ and closed with respect to the Lie bracket operation.
We show that $\cL_{(X^0,Z)}$ satisfies the H\"{o}rmander's condition, (as given by, e.g., condition (H) in \cite[Section 2.3.2]{nualart2006malliavin}), i.e. $\mathrm{dim}Lie(D_0,D_1)(x,z)=2$ for any $(x,z) \in \R^2$.
This implies that $\cL_{(X^0,Z)}$ is hypoelliptic and thus concludes the proof.

We follow the argument of \cite[Theorem 6, Step I.3]{ernst2024gapeev}.
As $D_1 = [\frac{\sigma}{\sqrt{2}},0]$ is constant, it follows by easy computations that considering $n$ times the Lie bracket of $D_1$ and $D_0$ gives 
\begin{equation}\label{eq:lemma:weak_solution:lie_brackets}
[[\dots[[D_0,D_1], \dots],D_1], D_1 ] = \left(\frac{\sigma}{\sqrt{2}}\right)^{n}\left(  \mu^{(n)}_x(x,z) \partial_x + q^{(n)}_x(x,z) \partial_z \right),
\end{equation}
where $f^{(n)}_x(x,z)$ denotes the $n$-th partial derivative of $f$ with respect to $x$.
By using again that $\sigma$ is a positive constant, we see that H\"{o}rmander's condition is verified if, for any $(x,z)$, 
we have either $q(x,z) \neq 0$ or $q^{(n)}_x(x,z) \neq 0$ for some $n \geq 1$.
Suppose not: let $(x_0,z_0) \in \R^2$ so that $q(x_0,z_0)=0$ and $q^{(n)}_x(x_0,z_0) = 0$ for every $n \geq 1$.
Then, as $q(x,z)$ is analytic on $\R^2$, this implies that the section $\R \ni x \mapsto q(x,z_0)$ is identically equal to $0$.
Thus, the process $(X^0,Z)$ solution to \eqref{eq:filtering:filtered_dynamics_change_of_variables} starting from any point { $(x,z_0)$} is so that $Z_t \equiv z_0$ $\P$-a.s.
Employing the process $(X^0,\Phi)$ solution to \eqref{eq:filtered:likelihood_ratio}, this implies that $z_0 \equiv Z_t = \frac{\sigma}{\gamma}\log(\Phi_t) - X^0_t$, i.e. $\log(\Phi_t) = \frac{\gamma}{\sigma}(X^0_t+z_0)$ for any $t \geq 0$.
We show that this is not possible, which leads to a contradiction and concludes the proof.
Indeed, by It\^{o}'s formula, one should have
\[
(1+\Phi_t^{-1})(\lambda_2 - \lambda_1 \Phi_t) + \gamma^2 \frac{\Phi_t^2}{1+\Phi_t} - \frac{\gamma^2}{2}=\frac{\gamma}{\sigma}m_2 +\gamma^2\frac{\Phi_t}{1+\Phi_t} - \delta \frac{\gamma}{\sigma} X^0_t.
\]
By multiplying both sides by $1+\Phi_t$ and by imposing the desired equality $\frac{\gamma}{\sigma}X^0_t = \log(\Phi_t) - \frac{\gamma}{\sigma}z_0$, this implies that the equality
\[
(1+\phi^{-1})(\lambda_2 - \lambda_1 \phi) - \frac{\gamma^2}{2}=\frac{\gamma}{\sigma}\left(m_2 -\delta z_0\right) - \delta \log(\phi).
\]
should hold true for every $\phi > 0$. As this is clearly not the case, the proof is concluded.
\end{proof}

\begin{remark}
By using the terminology of \cite{ernst2024gapeev}, we proved that the system $(X^0,\Phi)$ does not admit any \emph{trap curve}, in the sense of Definition 5 therein.
{ This implies that the diffusive behavior of the pair $(X^0,\Phi)$ is strong enough to make the process spread across the whole state space $\R \times \R_+$. In other words, the process can not be confined (or "trapped") in a one-dimensional manifold. Hypoellipticity of the infinitesimal generator $\cL_{(X^0,\Phi)}$ is a direct consequence of this genuinely diffusive behavior, as shown by Lemma \ref{filtered:lemma:weak_solution}.}
\end{remark}

Our next goal is to prove that $U \in \dC^1(\cO)$.
To this extent, denote by $\tau^*(x,y)$ and $\theta^*(x,y)$ the first entry times in the sets $\cS_+$ and $\cS_-$ respectively.
We need to ensure that, for any sequence $((x_n,y_n))_{n \geq 1} \subseteq \cC$, $(x_n,y_n) \to (x,y) \in \partial\cC =\partial \cS_+ \cup \partial\cS_-$, it holds, respectively,
\begin{equation}\label{eq:filtered:convergence_stopping_times}
\tau^*(x_n,y_n) \to 0, \:\text{ $\P$-a.s.,} \quad \text{or} \quad \theta^*(x_n,y_n) \to 0, \:\text{ $\P$-a.s.}
\end{equation}
In order to prove the above convergence, we show in Lemma \ref{filtered:lemma:feller} that the process $(X^0,\Pi)$ is strong Feller.
Next, we show in Lemma \ref{filtered:lemma:probabilistic_regularity} that the boundary points of $\cS_\pm$ are probabilistically regular for $\cS_\pm^c$ relatively to $(X^0,\Pi)$ respectively.
As the process is strong Feller, by \cite[Volume II, Chapter 13.1-2]{dynkin1965markov_processes} { probabilistic regularity} of $\partial\cS_\pm$ is equivalent to having \eqref{eq:filtered:convergence_stopping_times}.

\begin{lemma}\label{filtered:lemma:feller}
The process $(X^0,\Pi)$ is strong Feller.
\end{lemma}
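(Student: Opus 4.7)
The plan is to deduce strong Feller for $(X^0,\Pi)$ on $\cO$ from strong Feller for the transformed process $(X^0,Z)$ on $\R^2$, using the $\dC^\infty$-diffeomorphism $\Psi$ in \eqref{eq:filtered:diffeomorphism}. For any bounded measurable $f:\cO\to\R$, the pullback $\tilde f\coloneqq f\circ\Psi$ is a bounded measurable function on $\R^2$, and by construction $\E_{x,y}[f(X^0_t,\Pi_t)] = \E_{x,z(x,y)}[\tilde f(X^0_t,Z_t)]$. Since $(x,y)\mapsto z(x,y) = (\sigma/\gamma)\log(y/(1-y)) - x$ is continuous on $\cO$, continuity of $(x,z)\mapsto \E_{x,z}[\tilde f(X^0_t,Z_t)]$ on $\R^2$ transfers to continuity of $(x,y)\mapsto \E_{x,y}[f(X^0_t,\Pi_t)]$ on $\cO$. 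Hence it suffices to establish strong Feller for $(X^0,Z)$ on $\R^2$.

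The heart of the proof is then H\"ormander's theorem applied to the SDE \eqref{eq:filtering:filtered_dynamics_change_of_variables}. In the proof of Lemma~\ref{filtered:lemma:weak_solution} we already verified that the vector fields $D_0,D_1$ of \eqref{eq:lemma:weak_solution:quadratic_form} satisfy $\dim\mathrm{Lie}(D_0,D_1)(x,z)=2$ at every $(x,z)\in\R^2$. By H\"ormander's theorem in its Malliavin-calculus form (see, e.g., \cite[Section~2.3]{nualart2006malliavin}), this ensures that for every $t>0$ the law of $(X^0_t,Z_t)$ under $\P_{x,z}$ admits a $\dC^\infty$ density with respect to Lebesgue measure on $\R^2$, and more strongly that for every bounded measurable $g:\R^2\to\R$ the map $(x,z)\mapsto \E_{x,z}[g(X^0_t,Z_t)]$ belongs to $\dC^\infty(\R^2)$. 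In particular it is continuous, yielding the strong Feller property for $(X^0,Z)$ and, via the reduction above, for $(X^0,\Pi)$. This mirrors the scheme employed in \cite[Proposition~4]{peskir2024detection_multiple} for a closely related degenerate filtering model, as well as in \cite{ernst2024gapeev}.

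The main technical point is to ensure that the Malliavin covariance matrix of $(X^0_t,Z_t)$ is invertible with inverse possessing finite moments of all orders; this is what underlies the regularizing effect of the transition semigroup on merely bounded measurable data. The Hörmander condition is precisely what guarantees this, and its verification was the substantive content of Lemma~\ref{filtered:lemma:weak_solution}, where analyticity of the coefficients $\mu$ and $q$ in \eqref{eq:filtered:coefficient_X0_Z} was exploited to exclude any trap curve of the system. The auxiliary moment bounds on $X^0$ follow readily from the explicit representation \eqref{eq:explicit_representation} combined with boundedness of $m(\Pi)$, while the finite-variation component $Z$ is deterministically controlled on bounded time intervals by local boundedness of $q$; together these ingredients justify the Malliavin integration by parts and thus the desired smoothness, hence continuity, of the transition semigroup.
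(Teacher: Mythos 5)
Your first step (transferring strong Feller from $(X^0,Z)$ to $(X^0,\Pi)$ via the $\dC^\infty$-diffeomorphism $\Psi$) is fine and parallels the paper's reduction. The main issue lies in the Hörmander condition you invoke. You cite the verification, in Lemma~\ref{filtered:lemma:weak_solution}, that $\dim\mathrm{Lie}(D_0,D_1)(x,z)=2$, and claim that this is the hypothesis of the Malliavin-calculus Hörmander theorem. That is not the right condition: Nualart's condition~(H) in \cite[Section~2.3.2]{nualart2006malliavin} is the \emph{restricted} (parabolic) Hörmander condition, which requires the span of $D_1$ together with iterated brackets such as $[D_0,D_1]$, $[[D_0,D_1],D_1]$, etc.\ to have full rank — the drift $D_0$ by itself is not allowed in the spanning set. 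The full Lie-algebra condition $\dim\mathrm{Lie}(D_0,D_1)=2$ is genuinely weaker. The gap is not merely formal: in the kinetic example $dX_t=dW_t$, $dY_t=dt$, the full Lie algebra of the constant fields $\partial_y$ (drift) and $\partial_x$ (diffusion) is two-dimensional everywhere, yet $Y_t=y+t$ is deterministic, the law of $(X_t,Y_t)$ is singular, and the map $(x,y)\mapsto\E_{x,y}[f(X_t,Y_t)]$ is \emph{not} strong Feller for generic bounded measurable $f$. In the present problem the restricted condition does in fact hold — because $q(\cdot,z_0)$ can never be constant in $x$ (one checks directly that $q_x(x,z)=\phi'(z+x)+\delta$ is a non-constant analytic function of $x$), so some $q^{(n)}_x(x,z)\neq 0$ with $n\geq 1$ at every point — but you need to verify this explicitly rather than refer to Lemma~\ref{filtered:lemma:weak_solution}'s argument, which allows the escape route $q(x,z)\neq 0$ with all $x$-derivatives vanishing. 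The paper handles this by working with the vector fields $\bar{D}_0,\bar{D}_1$ on $(0,\infty)\times\R^2$ and verifying the rank-$3$ Hörmander condition there, which is exactly the parabolic/restricted condition.

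A second, related gap concerns the claim that Nualart's results give that $(x,z)\mapsto\E_{x,z}[g(X^0_t,Z_t)]$ is $\dC^\infty$. The theorem you cite gives smoothness of the \emph{density} of $(X^0_t,Z_t)$ in the forward variable for each fixed starting point. Strong Feller requires continuity (indeed you claim smoothness) in the \emph{initial} point, which is not an immediate consequence of the pointwise Malliavin theorem; it is what the backward hypoellipticity of $-\partial_t+\cL_{(X^0,Z)}$ provides. The paper's proof takes precisely that route: by \cite[Corollary~8]{peskir2025weak}, $\tilde F(t,x,y)=\E_{x,y}[f(X^0_t,\Pi_t)]$ is a distributional solution of $\partial_t\tilde F=\cL_{(X^0,\Pi)}\tilde F$; parabolic hypoellipticity (via the diffeomorphism to $(X^0,Z)$ and Hörmander's condition on $(0,\infty)\times\R^2$) then upgrades $\tilde F$ to $\dC^\infty$ directly in $(t,x,y)$, yielding strong Feller. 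To make the Malliavin route rigorous you would need either a Bismut–Elworthy–Li integration-by-parts argument in the initial variable or a Kusuoka–Stroock type joint regularity result, neither of which is covered by the citation you give.
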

\begin{proof}
We apply again the results of \cite{peskir2025weak} (see also \cite[Proposition 4.4]{peskir2024detection_multiple}).
Let $f:\cO \to \R$ be bounded and measurable and define $\Tilde{F}:(0,\infty) \times  \cO \to \R$ by $\Tilde{F}(t,x,y)\coloneqq \E[f(X^{x,y}_t,\Pi^{x,y}_t)]$.
By \cite[Section 5]{peskir2025weak} (see in particular Corollary 8), it holds $\partial_t \Tilde{F} = \cL_{(X^0,\Pi)}\Tilde{F}$ on $(0,\infty) \times \cO$ in the weak sense of Schwartz distributions.
We show that the operator $-\partial_t + \cL_{(X^0,\Pi)}$ is hypoelliptic over $(0,\infty) \times \cO$. Then, by \cite[Corollary 9]{peskir2025weak} we deduce $\Tilde{F} \in \dC^\infty((0,\infty) \times \cO)$, which implies, in particular, that for any $t > 0$ fixed we have $(x,y)\mapsto \E[f(X^{x,y}_t,\Pi^{x,y}_t)]$ is continuous, thus proving the strong Feller property.

To prove the hypoellipticity, we show that the operator $-\partial_t + \cL_{(X^0,Z)}$ satisfies the parabolic H\"{o}rmander's condition. This implies that $-\partial_t + \cL_{(X^0,Z)}$ is hypoelliptic and therefore so is $-\partial_t + \cL_{(X^0,\Pi)}$.
We express $-\partial_t + \cL_{(X^0,Z)}$ in quadratic form as $\bar{D}_0 + \bar{D}_1^2$, where
\[
\bar{D}_0= -\partial_t + \mu(x,z)\partial_x + q(x,z)\partial_z, \quad \bar{D}_1= \frac{\sigma}{\sqrt{2}}\partial_x.
\]
As in the proof of Lemma \ref{filtered:lemma:weak_solution}, we identify $\bar{D}_0$ and $\bar{D}_1$ with the vector of their coefficients, so that $\bar{D}_0 = (-1,\mu,q)$ and $\bar{D}_1 = (0,\frac{\sigma}{\sqrt{2}},0)$.
Therefore, parabolic H\"{o}rmander's condition holds if $\mathrm{dim}Lie(\bar{D}_0,\bar{D}_1)(t,x,z) = 3$ for any $(t,x,z) \in (0,\infty)\times\cO$.
We notice $\bar{D}_0 = (-1,D_0)$ and $\bar{D}_1 = (0,D_1)$, with $D_0$ and $D_1$ given by \eqref{eq:lemma:weak_solution:quadratic_form}.
As $D_0$ and $D_1$ do not depend on time, the $n$ times commutator of $\bar{D}_0$ and $\bar{D}_1$ is again given by \eqref{eq:lemma:weak_solution:lie_brackets}.
Fix $(x,z) \in \R^2$.
As showed in the proof of Lemma \ref{filtered:lemma:weak_solution} above, for any $(x,z) \in \R^2$ there exists $\bar{n} \geq 0$ so that $q^{(\bar{n})}(x,z) \neq 0$.
Thus, for such $\bar{n}$, we easily see that the three vectors $\bar{D}_0$, $\bar{D}_1$ and the $\bar{n}$ times commutator of $\bar{D}_1$ and $\bar{D}_0$ are linearly independent, thus proving that H\"{o}rmander's condition is satisfied.
\end{proof}

By definition, (see, e.g., \cite[Definition 4.2.9]{karatzas_shreve}), the boundary points $\partial \cC = \partial\cS_+ \cup \partial\cS_-$ are probabilistically regular for $\cS_\pm$ relatively to $(X^0,\Pi)$ if the random times
\[
\hat{\sigma}_-(x,y) \coloneqq \inf\{t > 0: \, (X^{x,y}_t,\Pi^{x,y}_t) \in \cS_- \}, \quad \hat{\sigma}_+(x,y) \coloneqq \inf\{t > 0: \, (X^{x,y}_t,\Pi^{x,y}_t) \in \cS_+ \}
\]
are such that
\begin{equation}\label{eq:filtered:lemma_regularity:equal_zero}
\P(\hat{\sigma}_-(x,y) = 0)=1 \:\: \forall (x_o,y_o) \in \partial\cS_-,  \quad \P(\hat{\sigma}_+(x,y) = 0) = 1 \:\:  \forall (x_o,y_o) \in \partial\cS_+.
\end{equation}
This is proved in the following Lemma.

\begin{lemma}\label{filtered:lemma:probabilistic_regularity}
Every point $(x_o,y_o) \in \partial\cS_\pm$ is probabilistically regular, i.e. \eqref{eq:filtered:lemma_regularity:equal_zero} holds true.
\end{lemma}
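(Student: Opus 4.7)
The plan is to focus on $\partial\cS_-$ (the argument for $\partial\cS_+$ being completely symmetric), to invoke Blumenthal's 0--1 law, and to lower bound the probability of immediate entry using the co-monotonic correlation between $X^0$ and $\Pi$ that follows from the fact that both are driven by the same Brownian motion $B$. Fix $(x_o,y_o)\in\partial\cS_-$; since $\cS_-=\{x\geq a_-(y)\}$ is closed (as $a_-$ is non-increasing and right-continuous), we have $x_o\geq a_-(y_o)$. Because $(X^0,\Pi)$ is strong Markov, Blumenthal's 0--1 law gives $\P_{x_o,y_o}(\hat\sigma_-(x_o,y_o)=0)\in\{0,1\}$, so by the monotone continuity of probability it suffices to prove that
\[
\liminf_{t\to 0^+}\,\P_{x_o,y_o}\big((X^0_t,\Pi_t)\in\cS_-\big)>0.
\]

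The key observation is that on the event $E_t:=\{X^0_t>x_o,\ \Pi_t>y_o\}$ the monotonicity of $a_-$ gives $a_-(\Pi_t)\leq a_-(y_o)\leq x_o<X^0_t$, so $(X^0_t,\Pi_t)\in\cS_-$. Hence it remains to show $\liminf_{t\to 0^+}\P(E_t)\geq \tfrac12$. To do so, I will exploit the change of variables of Lemma~\ref{filtered:lemma:change_of_variables}: since the function $y\mapsto \frac{\sigma}{\gamma}\log(y/(1-y))$ is strictly increasing, the equivalence $\Pi_t>y_o\iff X^0_t+Z_t>x_o+z_o$ holds, with $Z_t-z_o=\int_0^t q(X^0_s,Z_s)\,ds$ of finite variation. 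Choose $\epsilon>0$ and let $M:=\sup_{B_\epsilon(x_o,z_o)}(|\mu|+|q|)$; by path-continuity the event $A_t:=\{\sup_{s\leq t}(|X^0_s-x_o|+|Z_s-z_o|)<\epsilon\}$ satisfies $\P(A_t)\to 1$. On $A_t$, $|Z_t-z_o|\leq Mt$ and $|X^0_t-x_o-\sigma B_t|\leq Mt$, so on $A_t\cap\{\sigma B_t>3Mt\}$ both $X^0_t-x_o$ and $(X^0_t+Z_t)-(x_o+z_o)$ are strictly positive; i.e.\ $E_t$ occurs.

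The conclusion follows from the standard Brownian estimate $\P(\sigma B_t>3Mt)=\P(B_1>3M\sqrt{t}/\sigma)\to \tfrac12$ as $t\to 0^+$, combined with $\P(A_t)\to 1$, yielding $\liminf_{t\to 0^+}\P(E_t)\geq \tfrac12$. By Blumenthal this upgrades to $\P_{x_o,y_o}(\hat\sigma_-(x_o,y_o)=0)=1$. For $(x_o,y_o)\in\partial\cS_+$, the same scheme applies with $E_t':=\{X^0_t<x_o,\Pi_t<y_o\}$: on $E_t'$, non-increasing monotonicity of $a_+$ gives $a_+(\Pi_t)\geq a_+(y_o)\geq x_o>X^0_t$, so $(X^0_t,\Pi_t)\in\cS_+$, and the event $\{\sigma B_t<-3Mt\}\cap A_t$ again carries probability tending to $\tfrac12$.

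The main obstacle is precisely the degeneracy of the generator, which prevents the use of standard elliptic regularity-type arguments to establish probabilistic regularity; moreover, $a_\pm$ are only known to be one-side continuous, so one cannot rely on a "tube"-shaped neighborhood of the boundary graph. Both difficulties are resolved by the same structural feature of the model: the fact that $X^0$ and $\Pi$ are driven by the \emph{same} Brownian motion with positive diffusion coefficients forces joint upward (resp.\ downward) excursions with asymptotic probability $\tfrac12$, and monotonicity of $a_\pm$ translates such excursions directly into entry into $\cS_\mp$, irrespective of possible jumps of the free boundary.
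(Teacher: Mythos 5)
Your proof is correct and takes a genuinely different (and arguably cleaner) route from the paper's, though both rest on the same structural insight: because $X^0$ and $\Pi$ are driven by the common Brownian motion $B$ with positive diffusion coefficients, an upward (resp.\ downward) excursion of $B$ pushes both coordinates in the same direction, and the non-increasing monotonicity of $a_\pm$ converts such a co-monotone excursion into immediate entry into $\cS_\mp$, regardless of any jumps of the free boundary.

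Where you differ: the paper does not invoke Blumenthal's 0--1 law. Instead it introduces the one-dimensional comparison processes $\underline{X}$ and $\overline{X}$ (Ornstein--Uhlenbeck processes with drifts $m_2$ and $m_1$ sandwiching $X^0$), transforms the filter to $V_t=\log\Phi_t$, rewrites immediate entry into a rectangle as the simultaneous exceedance $B_s/s > s^{-1}\!\int_0^s F_r\,dr$ and $B_s/s > s^{-1}\!\int_0^s G_r\,dr$, and uses the law of iterated logarithms ($\varlimsup_{s\to 0} B_s/s=+\infty$ a.s.) to produce full probability directly. You instead work in the $(X^0,Z)$ coordinates already introduced in Lemma~\ref{filtered:lemma:change_of_variables}, freeze the drift coefficients on a small ball around $(x_o,z_o)$, and use only the elementary Gaussian fact $\P(\sigma B_t>3Mt)\to\frac12$ to get $\liminf_{t\to 0}\P((X^0_t,\Pi_t)\in\cS_-)\geq\frac12>0$, then upgrade to probability one via Blumenthal's 0--1 law. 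What your approach buys: no auxiliary comparison diffusions, no LIL, and a self-contained localization argument that only needs local boundedness of $\mu$ and $q$. What the paper's approach buys: the LIL delivers probability one directly, without the 0--1 law step, and the comparison processes $\underline X,\overline X$ are already set up in the preceding Lemma~\ref{filtered:lemma:regularity_boundary} and reused here at no extra cost. Both arguments are sound; yours is shorter and more elementary. One small remark: your use of Blumenthal relies on $\{\hat\sigma_-=0\}\in\cF^B_{0+}$, which holds here because $(X^0,\Pi)$ is a strong solution of an SDE driven by $B$ with locally Lipschitz coefficients, hence $\bbF^{X^0}=\bbF^B$; it is worth stating this explicitly since the filtration structure is delicate in the partial-observation setting (cf.\ the discussion around~\eqref{eq:filtering:controlled_dynamics}).
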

\begin{proof}
Let $(x_o,y_o) \in \partial\cS_-$.
Recall the definition of the likelihood ratio process $\Phi$ in \eqref{eq:filtered:likelihood_ratio} and of the upper boundary $a_-$ in \eqref{eq:filtered:barriers}.
Define $\Tilde{a}_-(\phi) \coloneqq a_-\left(\frac{\phi}{1+\phi}\right)$ and notice that, as $a_-$ is a non-increasing function of $y$ and $\phi \mapsto \frac{\phi}{1+\phi}$ is strictly increasing, $\Tilde{a}_-(\phi)$ is non-increasing as well.
Then, as $\Pi_t = \frac{\Phi_t}{1 +\Phi_t}$, we have that
\[
(X^0_t,\Pi_t) \in \cS_- \iff X^0_t \geq a_-(\Pi_t) \iff X^0_t \geq a_-\left( \frac{\Phi_t}{1 + \Phi_t}\right) = \Tilde{a}_-(\Phi_t),
\]
so that $\hat{\sigma}_- = \inf\{t > 0: \, X^{0}_t \geq \Tilde{a}_-(\Phi_t)\}$.
Consider the process $\underline{X}$ defined in \eqref{eq:filtered:lower_X} and recall that $\P( \underline{X}_t \leq X^0_t \; \forall t \geq 0) = 1$.
Since $\Tilde{a}_-$ is non-increasing, the rectangle $\cR_- \coloneqq \{ (x,\phi) \in \R \times (0,\infty): \, x \geq x_o, \phi \geq \phi_o \}$ is contained in the set $\{(x,\phi) \in \R\times\R_+: \, x \geq \Tilde{a}_-(\phi)\}$ in $\cS_-$ for any $(x_o,y_o) \in \partial \cS_-$.
By setting $\phi_o = \frac{y_o}{1+y_o}$, these two facts imply the following chain of inequalities:
\begin{multline*}
    \underline{\sigma}_{\cR_-} := \inf\{ t > 0: \, \underline{X}_t \geq x_o, \Phi_t \geq \phi_o \} \geq \inf\{ t > 0: \, \underline{X}_t \geq \Tilde{a}_-(\Phi_t) \}\\
    \geq \inf\{ t > 0: \, X^0_t \geq \Tilde{a}_-(\Phi_t) \} = \hat{\sigma}_-.
\end{multline*}
As $\hat{\sigma}_- \leq \underline{\sigma}_{\cR_-}$, it is enough to show $\P(\underline{\sigma}_{\cR_-} = 0) = 1$.
To do so, we make the further change of variable $V_t = \mathrm{log}(\Phi_t)$ to get
\begin{equation}\label{eq:filtered:log_likelihood_ratio}
\begin{aligned}
    dV_t =  \left( \gamma^2 \left(\frac{e^{V_t}}{1+e^{V_t}}- \frac{1}{2} \right) + (e^{-V_t} + 1)(\lambda_2 - \lambda_1 e^{V_t}) \right)dt + \gamma dB_t  \quad V_0 = v_o = \log(\phi_o).
\end{aligned}
\end{equation}
so that $\underline{\sigma}_{\cR_-} = \inf\{t > 0 : \, \underline{X}_t \geq x_o, V_t \geq v_o \}$.
Reasoning as in \cite[Proposition A.4]{ernst2023minimaxwienersequentialtesting} (see also \cite[Appendix B.9]{ferrari2023mertonproblemirreversiblehealthcare}), by using the semi-explicit representations of $\underline{X}$ and $V$, we have
\begin{equation}\label{eq:lemma:probabilistic_regularity:equivalencies}
\begin{aligned}
    & \P\left( \underline{\sigma}_{\cR_-} \leq t \right) = \P\left( \underline{X}_s \geq x_o, { V_s} \geq v_o, \: \text{for some }s \in (0,t]\right) \\
    & = \P\bigg( x_o + \int_0^s(m_2-\delta\underline{X}_r)dr + \sigma B_s  \geq x_o, \\
    & \quad v_o { + } \int_0^s \left( \gamma^2 \Big(\frac{e^{V_r}}{1+e^{V_r}}- \frac{1}{2} \Big) + (e^{-V_r} + 1)(\lambda_2 - \lambda_1 e^{V_r}) \right)dr + \gamma B_s \geq v_o, \: \text{for some }s \in (0,t]\bigg) \\
    & =  \P\left( \frac{B_s}{s} \geq  \frac{1}{s}\int_0^s F_r dr, \, \frac{B_s}{s} \geq  \frac{1}{s}\int_0^s G_r dr, \: \text{for some }s \in (0,t] \right),
\end{aligned}
\end{equation}
where we set
\[
F_r \coloneqq \frac{\delta}{\sigma} \underline{X}_r - \frac{m_2}{\sigma}, \quad G_r \coloneqq \gamma \Big(\frac{1}{2}-\frac{e^{V_r}}{1+e^{V_r}} \Big) + \frac{1}{\gamma}(e^{-V_r} + 1)(\lambda_1 e^{V_r}-\lambda_2).
\]
We claim that the set above has full probability for any $t > 0$.
To see that, we recall that, by the law of iterated logarithms (e.g. \cite[Theorem 2.9.23]{karatzas_shreve}), { it holds $\varlimsup_{t \to 0}\frac{B_t}{t} = \infty$} on an event of full probability.
On the other hand, being the processes $\underline{X}$ and $V$ continuous, the right-hand sides of the inequalities in \eqref{eq:lemma:probabilistic_regularity:equivalencies} converge to some finite values $\alpha$ and $\beta$ on a set of full probability.
We suppose without loss of generality that those two events are equal, and we denote it by $A$.
Fix $t > 0$ and $\omega \in A$. Then, for any $\epsilon > 0$ and $L > \alpha + \beta + \epsilon $, it is possible to find a sequence $(s_n)_{n \geq 1}$ of times, possibly depending on $\omega$ itself, so that 
\[
\frac{B_{s_n}}{s_n} > L > \alpha + \epsilon > \frac{1}{s_n}\int_0^{s_n} F_r dr
\]
for any $s_n < t$.
This implies $A \subseteq \{ \underline{\sigma}_{\cR_-} \leq t \}$ for any $t>0$, so that $\P(\underline{\sigma}_{\cR_-} < t) = 1$.
By taking the limit as $t \to 0$, we get the claim for any $(x_o,y_o) \in \partial\cS_-$.

\smallskip
The regularity of points $(x_o,y_o) \in \partial\cS_+$ is dealt with analogously, by taking into account the pair $(\overline{X},\Phi)$, with $\overline{X}$ defined by \eqref{eq:filtered:upper_X}, instead of $(\underline{X},\Phi)$ and the rectangle $\cR_+ \coloneqq \{ (x,\phi) \in (0,\infty)\times[0,\infty): \, x \leq x_o, \phi \leq \phi_o \}$ instead of $\cR_-$.
The conclusion follows by applying the same arguments as above to the processes $\overline{X}$ and $V$, using the fact that $\varliminf_{s \to 0}\frac{B_s}{s} = -\infty$ $\P$-a.s.
\end{proof}

By the same computations of Lemma \ref{filtered:lemma:probabilistic_regularity}, it is possible to show also the following Lemma:
\begin{lemma}\label{filtered:lemma:probabilistic_regularity_interior}
Every point $(x_o,y_o) \in \partial\cS_\pm$ is probabilistically regular for the interior of $\cS_\pm$ relatively to $(X^0,\Pi)$, i.e. the random times
\begin{equation}\label{eq:def_sigma_check}
    \check{\sigma}_-(x,y) \coloneqq \inf\{t > 0: \, (X^{x,y}_t,\Pi^{x,y}_t) \in \mathring{\cS}_- \}, \quad \check{\sigma}_+(x,y) \coloneqq \inf\{t > 0: \, (X^{x,y}_t,\Pi^{x,y}_t) \in \mathring{\cS}_+ \}
\end{equation}
are such that
\begin{equation}\label{eq:filtered:lemma_regularity_interior:equal_zero}
\P(\check{\sigma}_-(x,y) = 0)=1 \:\: \forall (x_o,y_o) \in \partial\cS_-,  \quad \P(\check{\sigma}_+(x,y) = 0) = 1 \:\:  \forall (x_o,y_o) \in \partial\cS_+.
\end{equation}
\end{lemma}
Lemma \ref{filtered:lemma:probabilistic_regularity_interior} follows by the same computations as in Lemma \ref{filtered:lemma:probabilistic_regularity}, provided that we replace everywhere the  inequality "$\geq$" with the strict inequality "$>$".

We are now ready to prove the continuous differentiability of $U$:
\begin{theorem}\label{filtered:thm:regularity_up_to_boundary}
One has $U \in \dC^1(\cO)$.
\end{theorem}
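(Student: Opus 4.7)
Since $U \in \dC^\infty(\cC)$ by Lemma \ref{filtered:lemma:weak_solution}, and $U$ is identically $-K_+$ on $\cS_+$ and $K_-$ on $\cS_-$ (hence trivially smooth with vanishing gradient on the interiors of the stopping regions), the global $\dC^1$-property reduces to showing that, as $(x, y)$ approaches any boundary point $(x_o, y_o) \in \partial \cS_+ \cup \partial \cS_-$ from within $\cC$, both $U_x(x, y)$ and $U_y(x, y)$ tend to $0$, matching the vanishing gradient on the adjacent stopping region. I focus on $(x_o, y_o) \in \partial \cS_-$; the case $\partial \cS_+$ is entirely symmetric.

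The first step of the plan is to derive a probabilistic representation of $U_x$ and $U_y$ inside $\cC$. Since the coefficients of $(X^0, \Pi)$ are smooth with bounded derivatives, the flow $(x, y) \mapsto (X^{x,y}, \Pi^{x,y})$ is differentiable (cf.~\cite[Theorem 13.8]{rogerswilliams_vol2}); the explicit representation \eqref{eq:explicit_representation} gives $\partial_x X^{x,y}_t = e^{-\delta t}$, while $\partial_y X^{x,y}_t$ and $\partial_y \Pi^{x,y}_t$ admit analogous bounds uniform on compact sets. In $\cC$ the saddle-point optimality (Lemma \ref{filtered:lemma_value_game}) permits an envelope-type argument: since $\tau^*(x, y)$ and $\theta^*(x, y)$ are first-entry times into $\cS_\mp$ and are stable under small perturbations of $(x, y) \in \cC$, the first-order contributions from perturbing the stopping times cancel, yielding
\begin{equation*}
U_x(x, y) = \E\!\left[\int_0^{\tau^* \wedge \theta^*} e^{-2\delta t}\, c''(X^{x,y}_t)\, dt\right], \quad U_y(x, y) = \E\!\left[\int_0^{\tau^* \wedge \theta^*} e^{-\delta t}\, c''(X^{x,y}_t)\, \partial_y X^{x,y}_t\, dt\right].
\end{equation*}

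Next, I would conclude by a dominated convergence argument. By Lemma \ref{filtered:lemma:probabilistic_regularity} combined with the strong Feller property from Lemma \ref{filtered:lemma:feller} (via the classical Dynkin--Yushkevich equivalence), $\tau^*(x, y) \to 0$ in probability as $(x, y) \to (x_o, y_o)$ from $\cC$; in particular $\tau^* \wedge \theta^* \to 0$ in probability. Exploiting the polynomial growth of $c''$ from \eqref{eq:cost_function:growth_derivative}, the flow sensitivities above, the moment estimates of Lemma \ref{filtered:lemma:finiteness}, and Assumption \ref{application:assumptions_discount} (whose role is precisely to produce an integrable majorant uniform in $(x, y)$ near $(x_o, y_o)$), the integrands in the representations above are dominated. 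The dominated convergence theorem then gives $U_x(x, y), U_y(x, y) \to 0$ as $(x, y) \to (x_o, y_o)$, yielding continuity of the gradient.

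The main technical obstacle is the rigorous justification of the envelope-type identities in this degenerate framework. Because $\cL_{(X^0,\Pi)}$ is not uniformly elliptic, standard Schauder estimates are unavailable, and one has to handle with care the contribution coming from perturbing the optimal stopping times $\tau^*, \theta^*$ with respect to the initial data. The argument would proceed in the spirit of \cite{de2020global}, leveraging the hypoellipticity of the generator established in Lemma \ref{filtered:lemma:weak_solution}, the smoothness of the flow, and the probabilistic regularity of $\partial \cS_\pm$, so that the boundary terms arising from perturbing the stopping times are absorbed into error terms that vanish in the limit.
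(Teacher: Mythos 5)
Your high-level plan assembles precisely the same ingredients as the paper's proof: hypoellipticity and smoothness inside $\cC$ from Lemma \ref{filtered:lemma:weak_solution}, the strong Feller property of $(X^0,\Pi)$ (Lemma \ref{filtered:lemma:feller}) together with the probabilistic regularity of $\partial\cS_\pm$ (Lemma \ref{filtered:lemma:probabilistic_regularity}) to obtain $\tau^*\wedge\theta^*\to 0$ at the boundary, polynomial growth of $c''$ and the moment bound of Lemma \ref{filtered:lemma:finiteness} as a majorant, and Assumption \ref{application:assumptions_discount} to control the variation of the filter. The target limits $U_x, U_y \to 0$ on $\partial\cS_\pm$ are also the right ones.

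There is, however, a genuine gap in the middle step. You state as \emph{identities} inside $\cC$ the envelope-type formulas
$U_x(x,y) = \E[\int_0^{\tau^*\wedge\theta^*}e^{-2\delta t}c''(X^{x,y}_t)\,dt]$ and its analogue for $U_y$, justified by asserting that ``the first-order contributions from perturbing the stopping times cancel.'' In a Dynkin game, that cancellation is the content of the smooth-fit property across $\partial\cS_\pm$ — the very thing Theorem \ref{filtered:thm:regularity_up_to_boundary} is asserting. Invoking the envelope identity at this stage is therefore circular, and you flag but do not resolve the difficulty. In a degenerate (merely hypoelliptic) setting there is no off-the-shelf elliptic estimate to break the circle.

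The paper sidesteps the envelope identity entirely by establishing a one-sided sandwich rather than an equality. For $U_x$, the lower bound $U_x\geq 0$ comes for free from the monotonicity $x\mapsto U(x,y)$ (Lemma \ref{filtered:lemma_regularity_U}\ref{filtered:lemma_regularity_U:monotonicity}). For the upper bound, one compares $U(x+\epsilon,y)$ and $U(x,y)$ using the \emph{mixed} pair of stopping times $\tau^*=\tau^*(x,y)$ (optimal for the inf-player at the base point) and $\theta^*_\epsilon=\theta^*(x+\epsilon,y)$ (optimal for the sup-player at the shifted point): saddle-point suboptimality then bounds the difference quotient from above by an expectation up to the random horizon $\tau^*\wedge\theta^*_\epsilon$, and dominated convergence yields
\begin{equation*}
0 \;\leq\; U_x(x,y) \;\leq\; \E\!\left[\int_0^{\tau^*\wedge\theta^*} e^{-2\delta t}\,\big|c''(X^{x,y}_t)\big|\,dt\right]
\end{equation*}
for all $(x,y)\in\cC$, without ever asserting equality. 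The same device works for $U_y$ using the process $R^y$ as the flow derivative of the filter and the $L^2$-uniform integrability of the family $(\Xi^\epsilon)$, which is exactly where Assumption \ref{application:assumptions_discount} enters. Passing to the boundary in these one-sided bounds is then licit, and no smooth-fit input is needed. Your argument can be repaired by replacing the claimed envelope identities with this sandwich: the subsequent dominated-convergence step you describe is then valid, and the conclusion follows.
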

\begin{proof}
The value function $U$ belongs to $\dC^2$ in the continuation region $\cC$ by Lemma \ref{filtered:lemma:weak_solution} and it belongs to $\dC^\infty$ on the interior of the stopping regions as $U \equiv \pm K_\pm$ on $\cS_\pm$, respectively.
Thus, it only remains to prove that the derivatives $U_x$ and $U_y$ are continuous up to the boundary.
As $U$ is constant in the interior of $\cS_+$ and of $\cS_-$, this amounts to prove that the limits of $U_x$ and $U_y$ as $(x,y)$ approaches $\partial\cC$ are equal to $0$.

\smallskip
We start with $U_x(x,y)$.
Let $(x,y) \in \cC$, $\epsilon > 0$ so that $(x+\epsilon,y) \in \cC$, which is always possible as $\cC$ is open.
Consider $0 \leq \frac{1}{\epsilon}(U(x+\epsilon,y) - U(x,y)) $, as $x \mapsto U(x,y)$ is non-decreasing by Lemma \ref{filtered:lemma_regularity_U}.
We estimate from above $U_x(x,y)$.
Let $(\tau^*,\theta^*)$ be the equilibrium stopping strategies for the Dynkin game starting at $(x,y) \in \cC$ and $(\tau^*_\epsilon,\theta^*_\epsilon)$ be the equilibrium stopping times for the game starting at $(x+\epsilon,y)$.
Recall the definition of $M_{(x,y)}(\tau,\theta)$ in \eqref{eq:filtered:zero_sum_functional} and notice that, as the Dynkin game has a value, we have
\begin{align*}
& U(x,y) = M_{(x,y)}(\tau^*,\theta^*) =  \inf_\tau \sup_\theta  M_{(x,y)}(\tau,\theta) = \sup_\theta M_{(x,y)}(\tau^*,\theta) \geq M_{(x,y)}(\tau^*,\theta^*_\epsilon), \\
& \begin{aligned}
U(x+\epsilon,y) & = M_{(x+\epsilon,y)}(\tau^*_\epsilon,\theta^*_\epsilon) = \sup_\theta \inf_\tau M_{(x+\epsilon,y)}(\tau,\theta) \\
& = \inf_\tau M_{(x+\epsilon,y)}(\tau,\theta^*_\epsilon) \leq M_{(x+\epsilon,y)}(\tau^*,\theta^*_\epsilon).
\end{aligned}
\end{align*}
This implies
\begin{align*}
    0 & \leq \frac{U(x+\epsilon,y) - U(x,y)}{\epsilon} 
    \leq \frac{1}{\epsilon}\E\left[\int_0^{\tau^* \land \theta^*_\epsilon} e^{-\delta t} \left( c' (X^{x+\epsilon,y}_t) - c' (X^{x,y}_t) \right) dt \right] \\
    & = \E\left[\int_0^{\tau^* \land \theta^*_\epsilon} e^{-2\delta t} \int_0^1  c''\big(X^{x,y}_t + r\epsilon e^{-\delta t} \big) dr dt \right],
\end{align*}
where the last equality follows from the fundamental theorem of calculus and by exploiting the linearity of the process $X^{x,y}$, which yields $X^{x+\epsilon,y}_t - X^{x,y}_t = \epsilon e^{-\delta t}$.
Recalling that $\vert c''(x)\vert \leq \alpha_2 (1 + \vert x \vert ^{p - 2})$ by Assumption \ref{application:assumptions_cost}, by using \eqref{eq:filtered:bound} we have the following bound:
\begin{align*}
    & \Bigg\vert \E\left[\int_0^{\tau^* \land \theta^*_\epsilon} e^{-2\delta t} \int_0^1  c''\big(X^{x,y}_t + r\epsilon e^{-\delta t}\big) dr dt \right] \Bigg\vert \leq \kappa \Big(1 +  \E\Big[\int_0^\infty e^{-2\delta t} \vert X^{x,y}_t \vert^{p-2} dt \Big] \Big) < \infty,
\end{align*}
for any $\epsilon \in (0,1]$.
We notice that $\theta^*_\epsilon \to \theta^*$ as $\epsilon \to 0$. This is a consequence of the probabilistic regularity of $\partial\cS_+$ for the interior $\mathring{\cS}_+$ relatively to $(X^0,\Pi)$ and of the strong Markov property of $(X^0,\Pi)$.
Indeed, it is easy to see that $(\theta^*_\epsilon)_{\epsilon > 0}$ is a decreasing sequence of stopping times and that it converges to 
\[
\theta^+ = \inf\{t \geq 0: X^{x,y}_t < a_+(\Pi^{y}_t)  \} = \inf\{ t \geq 0: (X^{x,y}_t,\Pi^{y}_t) \in \mathring{\cS}_+ \}.
\]
We show that $\theta^* = \theta^+$ $\P$-a.s. Since $\mathring{\cS}_+ \subseteq \cS_+$, it follows that $\theta^* \leq \theta^+$. We now show that $\P(\theta^+ > \theta^*) = 0$.
Since the equality $\theta^* = \theta^+$ is always satisfied when $\theta^* = \infty$, { we have $\{\theta^+ > \theta^*\} = \{ \theta^+ > \theta^*, \theta^* < \infty \}$}.
On the event $\{\theta^* < \infty \}$, $\theta^+$ can be expressed in terms of $\theta^*$ as
\[
\theta^+ = \theta^* + \inf\{ t \geq 0: (X^{x,y}_{t + \theta^*},\Pi^{x,y}_{t + \theta^*}) \in \mathring{\cS}_+ \}.
\]
Moreover, if $\theta^+ > \theta^*$, it holds $(X^{x,y}_{\theta^*},\Pi^{x,y}_{\theta^*}) \in \partial\cS_+ = \cS_+ \setminus \mathring{\cS}_+$.
Since $(X^{x,y}_{\theta^*},\Pi^{x,y}_{\theta^*}) \in \partial\cS_+$ on $\{\theta^* < \theta^+\}$, the first entry time of $(X^{x,y}_{t+\theta^*},\Pi^{x,y}_{t+\theta^*})$ in $\mathring{\cS}_+$ must occur at a strictly positive time. Thus, we have
\begin{multline*}
    \{ \theta^+ > \theta^* \} = \{ \theta^+ - \theta^* > 0, \theta^* < \infty, (X^{x,y}_{\theta^*},\Pi^{x,y}_{\theta^*}) \in \partial\cS_+ \} \\
    = \{ \inf\{ t > 0: (X^{x,y}_{t + \theta^*},\Pi^{x,y}_{t + \theta^*}) \in \mathring{\cS}_+ \} > 0, (X^{x,y}_{\theta^*},\Pi^{x,y}_{\theta^*}) \in \partial\cS_+, \theta^* < \infty \}.
\end{multline*}
Therefore,
\begin{multline*}
    \P(\theta^+ > \theta^* ) = \E[ \ind_{\{ \theta^* <\infty, (X^{x,y}_{\theta^*},\Pi^{x,y}_{\theta^*}) \in \partial\cS_+ \}} \ind_{\{ \inf\{ t > 0: (X^{x,y}_{t + \theta^*},\Pi^{x,y}_{t + \theta^*}) \in \mathring{\cS}_+ \} > 0 \}}  ] \\
    = \E[ \ind_{\{ \theta^* <\infty, (X^{x,y}_{\theta^*},\Pi^{x,y}_{\theta^*}) \in \partial\cS_+ \}} \E[ \ind_{\{ \inf\{ t > 0: (X^{x,y}_{t + \theta^*},\Pi^{x,y}_{t + \theta^*}) \in \mathring{\cS}_+ \} > 0 \}} \vert \cF_{\theta^*} ] ]
\end{multline*}
By relying on the strong Markov property of $(X^0,\Pi)$, on $\{\theta^*<\infty\}$, we have
\[
    \E[\ind_{\left\{\inf\{t>0:(X^{x,y}_{t+\theta^*},\Pi^{x,y}_{t+\theta^*})\in\mathring{\cS}_+\}>0\right\}}\,\vert \cF_{\theta^*}] =\P_{X^{x,y}_{\theta^*},\Pi^{x,y}_{\theta^*}}(\check{\sigma}_+>0),\quad \P\text{-a.s.},
\]
where, under $\P_{u,v}$, the stopping time $\check{\sigma}_+$ is defined by
\[
    \check{\sigma}_+:=\inf\{t>0:(X^{u,v}_t,\Pi^{u,v}_t)\in\mathring{\cS}_+\}.
\]
By Lemma \ref{filtered:lemma:probabilistic_regularity_interior}, we have $\P_{u,v}(\check{\sigma}_+>0) = \P(\check{\sigma}_+(u,v) > 0) =0$, for any $(u,v)\in\partial\cS_+$.
Therefore,
\[
    \P(\theta^+>\theta^*)=\E[\ind_{\{\theta^*<\infty,\,(X^{x,y}_{\theta^*},\Pi^{x,y}_{\theta^*})\in\partial\cS_+\}}\P_{X^{x,y}_{\theta^*},\Pi^{x,y}_{\theta^*}}(\check{\sigma}_+>0) ]=0.
\]
Hence $\theta^+=\theta^*$, $\P$-a.s., and so the desired convergence $\theta^*_\epsilon\to\theta^*$, $\P$-a.s., follows.
Since $c''$ is continuous over $(0,\infty)$, the dominated convergence theorem yields
\begin{equation}\label{filtered:thm:regularity_up_to_boundary:x_derivative_bound}
0 \leq  U_x(x,y) = \lim_{\epsilon \to 0} \frac{U(x+\epsilon,y)- U(x,y)}{\epsilon} \leq \E\left[\int_0^{\tau^*\land \theta^*} e^{-2\delta t} \big\vert c''\left(X^{x,y}_t \right) \big\vert dt \right], \;\; \forall (x,y) \in \cC.
\end{equation}
Now we send $(x,y)$ to $(x_o,y_o) \in \partial\cS_+$ (respectively, $\partial\cS_-$). By \eqref{eq:filtered:convergence_stopping_times}, we have $\tau^*(x,y) \to 0$ (respectively, $\theta^*(x,y) \to 0$), so that \eqref{filtered:thm:regularity_up_to_boundary:x_derivative_bound} and dominated convergence theorem imply
\[
0 \leq \varliminf_{(x,y) \to (x_o,y_o) \in \partial\cS_\pm} U_x(x,y) \leq \varlimsup_{(x,y) \to (x_o,y_o) \in \partial\cS_\pm} U_x(x,y) \leq 0,
\]
thus proving that $U_x$ is continuous across $\partial\cS_+$ (respectively, across $\partial\cS_-)$.

\smallskip
We now deal with the continuity of the partial derivative $U_y$ across the boundary of $\cC$.
Let $(x,y) \in \cC$, $\epsilon > 0$ so that $(x,y+\epsilon) \in \cC$, which is always possible as $\cC$ is open.
Consider $\frac{1}{\epsilon}(U(x,y+\epsilon) - U(x,y)) \geq 0$, as $x \mapsto U(x,y)$ is non-decreasing by Lemma \ref{filtered:lemma_regularity_U}.
We estimate from above $U_y(x,y)$.
Let $(\tau^*,\theta^*)$ be the equilibrium stopping strategies for the Dynkin game starting at $(x,y) \in \cC$ and $(\tau^*_\epsilon,\theta^*_\epsilon)$ be the equilibrium stopping times for the game starting at $(x,y+\epsilon)$.
As the times $\theta^*_\epsilon$ are sub optimal for the sup-player when starting at $(x,y)$ and $\tau^*$ is sub optimal for the inf-player when starting from $(x,y+\epsilon)$, we have again
\begin{equation}\label{eq:filtered:thm:regularity_up_to_boundary:bound_y_differences}
\begin{aligned}
    0 & \leq \frac{U(x,y+\epsilon) - U(x,y)}{\epsilon} \leq \frac{1}{\epsilon}\E\left[\int_0^{\tau^* \land \theta^*_\epsilon} e^{-\delta t} \left( c' (X^{x,y+\epsilon}_t) - c' (X^{x,y}_t) \right) dt \right] \\
    & = \E\left[\int_0^{\tau^* \land \theta^*_\epsilon} e^{-\delta t} \frac{X^{x,y+\epsilon}_t - X^{x,y}_t}{\epsilon} \int_0^1 c''\big(X^{x,y}_t + r(X^{x,y+\epsilon}_t - X^{x,y}_t)\big)dr dt \right].
\end{aligned}
\end{equation}
Set $\Delta\Pi^y_t \coloneqq \frac{1}{\epsilon}(\Pi^{x,y+\epsilon}_t-\Pi^{x,y}_t)$ and $\Delta X^{y}_t \coloneqq \frac{1}{\epsilon}(X^{x,y+\epsilon}_t - X^{x,y}_t)$. By direct calculations, their stochastic differentials are given by
\begin{equation*}
\begin{aligned}
    & d\Delta X^{y}_t = \left( \sigma\gamma \Delta\Pi^y_t -\delta \Delta X^{y}_t \right)dt, && \Delta X^{y}_0 = 0, \\
    & d \Delta\Pi^y_t = -(\lambda_1+\lambda_2)\Delta\Pi^y_t dt + \gamma\Delta\Pi^y_t\left(1-\Pi^{x,y+\epsilon}_t - \Pi^{x,y}_t \right) dB_t, && \Delta\Pi^y_0 = 1.
\end{aligned}
\end{equation*}
Let $R^y=(R^y_t)_{t \geq 0}$ be given by
\[
dR^y_t = -(\lambda_1+\lambda_2)R^y_tdt + \gamma R^y_t(1-2{ \Pi^{x,y}_t})dB_t, \quad R^y_0 = 1.
\]
{ Notice that $R^y$ does not depend on $x$ as $\Pi^{x,y}$ does not.}
By \cite[Theorem V.7.39]{protter2005stochastic}, we have $\Delta\Pi^y_t \to R^y_t$, $\P$-a.s., for all $t \geq 0$, as $\epsilon \to 0$, which implies $\Delta X^{y}_t \to e^{-\delta t} \sigma\gamma \int_0^t e^{\delta s} R^y_s ds$, $\P$-a.s., as well.
Then, provided that we can apply the dominated convergence theorem, it holds
\begin{equation}\label{eq:filtered:thm:regularity_up_to_boundary:bound_y_derivative}
    0 \leq U_y(x,y) \leq \sigma\gamma\E\left[\int_0^{\tau^* \land \theta^*} e^{-2\delta t} \Big(\int_0^t R^y_s ds\Big) c''\big(X^{x,y}_t\big) dt \right].
\end{equation}
By taking the limit as $\cC \ni (x,y) \to (x_o,y_o) \in \partial\cS_\pm$ and invoking \eqref{eq:filtered:convergence_stopping_times}, we deduce
\[
0 \leq \varliminf_{(x,y) \to (x_o,y_o) \in \partial\cS_\pm}U_y(x,y) \leq \varlimsup_{(x,y) \to (x_o,y_o) \in \partial\cS_\pm}U_y(x,y) \leq 0,
\]
thus proving continuity across the boundary of $\cC$.

The rest of the proof is dedicated to showing that dominated convergence theorem can be applied in \eqref{eq:filtered:thm:regularity_up_to_boundary:bound_y_differences} to obtain the lower bound on $U_y$ \eqref{eq:filtered:thm:regularity_up_to_boundary:bound_y_derivative}.
As one has $\Delta X^{y}_t = e^{-\delta t} \gamma\sigma\int_0^t e^{\delta s} \Delta\Pi^y_s ds$, we set
\begin{equation}\label{eq:thm:regularity_up_to_boundary:uniformly_integrable_rvs}
\Xi^\epsilon \coloneqq \sigma\gamma \int_0^{\tau^* \land \theta^*_\epsilon} e^{-2\delta t} \Big(\int_0^t \Delta\Pi^y_s ds\Big) \int_0^1 c''\big(X^{x,y}_t + r(X^{x,y+\epsilon}_t - X^{x,y}_t)\big)dr dt.
\end{equation}
We aim at showing that the family $(\Xi^\epsilon)_{\epsilon \in (0,\frac{1}{2}]}$ is bounded in $L^2$-norm, hence uniformly integrable.
By using again { $\vert c''(x) \vert \leq \alpha_2(1+\vert x \vert^{p-2})$} with $p \geq 2$ and the fact that $\Delta\Pi^y_t$ is always positive, we have
\begin{equation}\label{eq:thm:smooth_fit:first_bound_lambda}
\vert \Xi^\epsilon \vert \leq \kappa \int_0^{\infty} e^{-2\delta t} \Big( \int_0^t \Delta\Pi^y_s ds\Big) \left( 1 + \vert X^{x,y}_t \vert^{p-2} + \epsilon^{p-2} e^{-\delta(p-2)t} \Big(\int_0^t \Delta\Pi^y_s ds \Big)^{p-2}   \right) dt,
\end{equation}
{ for some positive constant $\kappa$.}
We suppose first $p \neq 2$.
By taking the expectation, applying Jensen's inequality, Fubini-Tonelli's theorem and using \eqref{eq:filtered:bound}, we deduce
\begin{align*}
& \E[\vert \Xi^\epsilon \vert^2] \leq \kappa_0 + \kappa_1 \E\bigg[  \int_0^{\infty} e^{-2\delta t} \Big( \Big( \int_0^t \Delta\Pi^y_s ds\Big)^2 +  \Big( \int_0^t \Delta\Pi^y_s ds\Big)^4 \Big) dt \\
& \;\; + \int_0^{\infty} e^{-p\delta t} \Big ( \int_0^t \Delta\Pi^y_s ds\Big)^{2(p-1)} dt \bigg] \\
& \leq \kappa_0 + \kappa_1 \bigg(\int_0^{\infty} e^{-2\delta t} \Big( t\int_0^t \E[(\Delta\Pi^y_s)^2] ds +  t^3\int_0^t \E[(\Delta\Pi^y_s)^4] ds \Big) dt \\
& \;\; +\int_0^{\infty} e^{-p\delta t} t^{2p - 3} \int_0^t \E[(\Delta\Pi^y_s)^{2(p-1)}] ds dt \bigg) \\
& \leq \kappa_0 + \kappa_1 \bigg(\int_0^{\infty} e^{-2\delta t} \Big( t\int_0^t \E[(\Delta\Pi^y_s)^2] ds +  t^3\int_0^t \E[(\Delta\Pi^y_s)^4] ds \Big) dt \\
& \;\; + \int_0^{\infty} e^{-p\delta t} t^{\alpha} \int_0^t \E[(\Delta\Pi^y_s)^{2(p-1)}] ds dt \bigg),
\end{align*}
where $\alpha = \lceil 2(p-1) - 1 \rceil$.
Integrating by parts, we finally get
\begin{multline}\label{eq:lemma:smooth_fit:integral_lambda_e}
    \E[\vert \Xi^\epsilon \vert^2 ] \leq \kappa_0 + \kappa_1\bigg(\int_0^{\infty} e^{-2\delta t} \left( \E[(\Delta\Pi^y_t)^2] +  g_2(t) \E[(\Delta\Pi^y_t)^4] \right) dt \\
    + \int_0^{\infty} e^{-p\delta t} g_\alpha(t) \E[(\Delta\Pi^y_t)^{2(p-1)}] dt \bigg),
\end{multline}
where $g_2(t)$ and $g_\alpha(t)$ are two suitable polynomials of degree $2$ and $\alpha$ respectively.
We notice that, for any $q \geq 1$, we have
\begin{equation}\label{eq:filtered:derivatives:delta_pi_power}
\begin{aligned}
& (\Delta\Pi^y_t)^q = \exp\left( -q(\lambda_1+\lambda_2)t +\frac{q(q-1)}{2}\gamma^2\int_0^t(1-\Pi^{y}_s - \Pi^{y-\epsilon}_s)^2 ds \right) M^{(q)}_t
\end{aligned}
\end{equation}
with $M^{(q)}$ a positive martingale, and $(1-\Pi^y_t - \Pi^{y-\epsilon}_t)^2 \leq 2$.
Then, we bound \eqref{eq:lemma:smooth_fit:integral_lambda_e} by
\begin{multline}\label{eq:lemma:smooth_fit:p_neq_2_bound}
    \kappa\int_0^{\infty} e^{-2\delta t} \Big( e^{-2(\lambda_1+\lambda_2)t + 2\gamma^2 t} + { g_2(t)}e^{-4(\lambda_1+\lambda_2)t + 12\gamma^2 t} \Big) dt \\
    +  \int_0^{\infty} e^{-p\delta t} { g_\alpha(t)} e^{-2(p-1)(\lambda_1 + \lambda_2)t + 2(p -1)(2p-3)\gamma^2 t } dt,
\end{multline}
which is finite if and only if condition \eqref{filtered:assumption_delta:p_not_2} in Assumption \ref{application:assumptions_discount} holds.
This implies $\sup_{\epsilon \in (0,1] }\E[\vert \Xi^\epsilon \vert^2 ] < \infty$ which concludes the proof in the case $p > 2$.

If $p=2$, we notice that \eqref{eq:thm:smooth_fit:first_bound_lambda} reduces to
\[
\vert \Xi^\epsilon \vert \leq \kappa \int_0^{\infty} e^{-2\delta t} \Big( \int_0^t \Delta\Pi^y_s ds\Big)dt.
\]
Then, by the same steps as before, we get
\begin{multline}\label{eq:lemma:smooth_fit:p_eq_2_bound}
    \E[\vert \Xi^\epsilon \vert^2]  \leq \kappa \int_0^\infty e^{-2\delta t} \E\left[ \Big( \int_0^t \Delta \Pi^y_t \Big)^2 \right]dt \leq \kappa \int_0^\infty e^{-2\delta t} t \Big( \int_0^t \E\Big[ \big(\Delta \Pi^y_s \big)^2 \Big] ds \Big)dt \\
    \leq \kappa \int_0^\infty e^{-2\delta t} \E\Big[ \big(\Delta \Pi^y_t \big)^2 \Big]dt \leq \kappa \int_0^\infty e^{-t(2\delta + 2(\lambda_1+\lambda_2)-2\gamma^2)}dt < \infty,
\end{multline}
by condition \eqref{filtered:assumption_delta:p_equal_2} in Assumption \ref{application:assumptions_discount}. This shows that $(\Xi^\epsilon)_{\epsilon \in (0,\frac{1}{2}]}$ is uniformly bounded in $L^2$.
\end{proof}

\begin{remark}\label{filtered:rmk:derivatives_bound}
For later use, we notice that the calculations in the proof of Theorem \ref{filtered:thm:regularity_up_to_boundary} imply the following bounds on the first-order derivatives of $U$:
\begin{equation}\label{eq:U_derivatives:final_bounds}
\vert U_x(x,y) \vert \leq \kappa_1(1+\vert x \vert^{p-2}), \qquad 
\vert U_y(x,y) \vert \leq \kappa_2, 
\end{equation}
for some positive constants $\kappa_1$ and $\kappa_2$.
In order to see this, we proceed as follows.
As for the $x$-derivative, recalling that $U_x \equiv 0$ on $\cS_-$ and $\cS_+$ and using \eqref{filtered:thm:regularity_up_to_boundary:x_derivative_bound}, Assumption \ref{application:assumptions_cost} and Lemma \ref{filtered:lemma:finiteness}, we have
\begin{multline*}
\vert U_x(x,y) \vert \leq \int_0^\infty  e^{-2\delta t}\E\left[ \big\vert c''\left(X^{x,y}_t \right) \big\vert \right]dt  \\
\leq \kappa\left( 1 + \int_0^\infty  e^{-2\delta t}\E\Big[  \vert X^{x,y}_t \vert ^{p-2} \Big]dt \right) \leq \kappa(1+\vert x \vert^{p-2}),
\end{multline*}
As for $U_y$, we have $U_y \equiv 0$ on $\cS_\pm$. 
Then, we recall that the random variables $(\Xi^\epsilon)_{\epsilon \in (0,\frac{1}{2}]}$ defined in \eqref{eq:thm:regularity_up_to_boundary:uniformly_integrable_rvs} are uniformly integrable, and that their $L^2$-norm { do not} depend on the initial value $y \in (0,1)$, as showed by \eqref{eq:lemma:smooth_fit:p_neq_2_bound} if $p > 2$ or by \eqref{eq:lemma:smooth_fit:p_eq_2_bound} if $p=2$.
Then, by dominated convergence theorem and \eqref{eq:filtered:thm:regularity_up_to_boundary:bound_y_derivative}, one has 
\begin{equation*}
\begin{aligned}
    & \vert U_y(x,y) \vert \leq \sigma\gamma\E\left[\int_0^{\tau^* \land \theta^*} e^{-2\delta t} \Big(\int_0^t R^y_s ds\Big) c''\big(X^{x,y}_t\big) dt \right] \leq \sup_{\epsilon \in (0,\frac{1}{2}]} \E[\vert \Xi^\epsilon \vert^2]^\frac{1}{2} \leq \kappa_2
\end{aligned}
\end{equation*}
with $\kappa_2$ given by the square root of right-hand side of \eqref{eq:lemma:smooth_fit:p_neq_2_bound} if $p > 2$ or of \eqref{eq:lemma:smooth_fit:p_eq_2_bound} if $p=2$.
\end{remark}

\subsubsection{Regularity refinements: a new coordinate system for the value function $U$}\label{sec:filtered:regularity_refinements}
In Section \ref{sec:application:partial_observation:filtered_problem}, we proved that Hypothesis \ref{conj:main} is satisfied and, moreover, that $U \in \dC^1(\cO)$.
Due to the degeneracy of the operator $\cL_{(X^0,\Pi)}$, we cannot infer more about the regularity of $U$.
In particular, it is not possible to prove that $U \in \bbW^{2,\infty}_{loc}$.
Thus, the existence of a pair $(V,\lambda)$ solution of \eqref{eq:HJB} and of an optimal control $\xi^\star$ will be proved by means of Theorem \ref{thm:HJB_sol_weak}, which requires less regularity.

Theorem \ref{thm:HJB_sol_weak} requires that $\cL_{(X^0,\Pi)}U$ exists almost everywhere and belongs to $L^\infty_{loc}(\cO)$.
In order to recover such regularity, we consider the transformation of the value function $\widehat{U}(x,z)$ associated with the change of coordinates $(X^0,Z)$ \eqref{eq:filtered:definition_z} and investigate its regularity properties.

Recall from Lemma \ref{filtered:lemma:change_of_variables} and equation \eqref{eq:filtered:diffeomorphism} the definition of the process $(X^0,Z)$ and of the diffeomorphism $\Psi$.
Set
\begin{equation}\label{eq:filtered:U_change_of_variables}
\widehat{U}(x,z) \coloneqq U\left( \Psi((x,z))\right) = U\left(x, \frac{e^{\frac{\gamma}{\sigma}(z+x)}}{1 + e^{\frac{\gamma}{\sigma}(z + x) }} \right) 
\end{equation}
so that
\begin{equation}\label{eq:filtered:U_hat_change_of_variables}
U(x,y) = \widehat{U}\left(\Psi^{-1}((x,y))\right) = \widehat{U}\left(x,\frac{\sigma}{\gamma}\log\left(\frac{y}{1-y}\right) - x\right).
\end{equation}
Notice that $\widehat{U}(x,z)$ has the same representation as in \eqref{eq:filtered:U}:
\begin{equation}\label{eq:filtered:dynkin_z}
\widehat{U}(x,z) \coloneqq \inf_{\tau} \sup_ {\theta} \E\left[\int_0^{\tau \land \theta} e^{-\delta t} c' (X^{x,z}_t)dt +K_+ e^{-\delta \tau}\ind_{\tau < \theta} - K_- e^{-\delta \theta}\ind_{\theta < \tau} \right],
\end{equation}
with the pair $(X^0,\Pi)$ given by \eqref{eq:filtering:filtered_dynamics} replaced by $(X^0,Z)$ given by \eqref{eq:filtering:filtered_dynamics_change_of_variables}.
Analogous properties as in Lemma~\ref{filtered:lemma_regularity_U} hold for $\widehat{U}$ as well.
Consider the sets $\widehat{\cC} \coloneqq \Psi^{-1}(\cC)$ and $\widehat{\cS}_\pm \coloneqq \Psi^{-1}(\cS_\pm)$.
Notice that, since $\Psi$ is a diffeomorphism, $\widehat{\cC}$ and $\widehat{\cS}_\pm$ are respectively open and closed sets.
In order to identify the boundaries of these sets, define the functions $b_\pm$ as follows: Let $\hat{a}_\pm(x)$ be the generalized inverse of $a_\pm(y)$, defined by
\[
\hat{a}_+(x) \coloneqq \sup\{ y \in (0,1): \, a_+(y) \leq x \}, \quad \hat{a}_-(x) \coloneqq \inf\{ y\in (0,1): \, a_-(y) \geq x \}.
\]
Set
\begin{equation}\label{eq:filtered:dynzin_z:barriers_b_x}
    b_\pm (x) \coloneqq \frac{\sigma}{\gamma}\log\Big( \frac{\hat{a}_\pm(x)}{1 - \hat{a}_\pm(x)}\Big) - x,
\end{equation}
which are well defined, as $\hat{a}_\pm(\R) \subseteq (0,1)$.
Since $\hat{a}_\pm$ are non-increasing, as $a_\pm$ are so, we deduce that $b_\pm$ are strictly decreasing, so that we can consider the inverses of $b_\pm$:
\begin{equation}\label{eq:filtered:dynzin_z:barriers_b_z}
    \hat{b}_\pm(z) \coloneqq (b_\pm)^{-1}(z).
\end{equation}
By using the barriers $\hat{b}_\pm$, the continuation and stopping regions related to $\widehat{U}$ can be expressed as follows:
\begin{equation}\label{eq:filtered:regions_z}
\begin{aligned}
    \widehat{\cC} & = \{ (x,z): \; -K_+ < \widehat{U}(x,z) < K_- \} = \{ (x,z): \; \hat{b}_+(z) < x < \hat{b}_-(z) \}, \\
    \widehat{\cS}_- & = \{ (x,z): \; \widehat{U}(x,z) \geq K_- \} = \{ (x,z): \; x \geq \hat{b}_-(z) \}, \\
    \widehat{\cS}_+ & = \{ (x,z): \; \widehat{U}(x,z) \leq -K_+ \} = \{ (x,z): \; x \leq \hat{b}_+(z) \}.
\end{aligned}
\end{equation}
Observe that the stopping times $(\tau^*,\theta^*)$ defined by \eqref{eq:filtered:optimal_stopping_times} are a saddle point for the Dynkin game with value function $\widehat{U}(x,z)$ as well, and they can be expressed as the first entry times of the process $( X^0,Z)$ in the sets $\widehat{\cS}_+$ and $\widehat{\cS}_-$ respectively.

\begin{lemma}\label{filtered:lemma:dynkin_z:value_function}
Let $\widehat{U}$ be the value function of the Dynkin game defined by \eqref{eq:filtered:dynkin_z}. Then, $\widehat{U} \in \dC^1(\R^2) \cap \dC^2(\widehat{\cC})$ and it solves
\begin{equation}\label{eq:filtered:dinkin_z:var_ineq}
\begin{dcases}
\cL_{(X^0,Z)}\widehat{U}(x,z) + c'(x) -\delta \widehat{U}(x,z) = 0, &\text{if }  \hat{b}_+(z) < x < \hat{b}_-(z), \\
\widehat{U}(x,z) =  -K_+, &\text{if } x \leq \hat{b}_+(z) , \\
\widehat{U}(x,z) = K_-, &\text{if } x \geq \hat{b}_-(z) .
\end{dcases}
\end{equation}
Moreover, $\widehat{U}_{xx} \in L^\infty_{\text{loc}}(\R^2)$.
\end{lemma}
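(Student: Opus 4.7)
The plan is to transfer regularity and the PDE from $U$ to $\widehat{U}$ via the smooth diffeomorphism $\Psi$, and then to extract local boundedness of $\widehat{U}_{xx}$ directly from the PDE inside the continuation region.

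\textbf{Step 1: Regularity and free-boundary equation.} Since $\Psi$ and $\Psi^{-1}$ are $\dC^\infty$-diffeomorphisms between $\R^2$ and $\cO = \R \times (0,1)$, and since $U \in \dC^1(\cO) \cap \dC^2(\cC)$ by Lemma~\ref{filtered:lemma:weak_solution} and Theorem~\ref{filtered:thm:regularity_up_to_boundary}, the composition $\widehat{U}(x,z) = U(\Psi(x,z))$ inherits the regularity $\widehat{U} \in \dC^1(\R^2) \cap \dC^2(\widehat{\cC})$, with $\widehat{\cC} = \Psi^{-1}(\cC)$. The boundary identification in \eqref{eq:filtered:regions_z} follows from the constant values $U \equiv \mp K_\mp$ on $\cS_\pm$ and the definition of $\hat{b}_\pm$ in \eqref{eq:filtered:dynzin_z:barriers_b_z}, so the last two lines of \eqref{eq:filtered:dinkin_z:var_ineq} are immediate. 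For the PDE inside $\widehat{\cC}$, one observes that, for any $f \in \dC^2(\cO)$, a direct chain-rule computation (or equivalently, since $(X^0,\Pi)$ and $(X^0,Z)$ are related pathwise via $Z_t = (\sigma/\gamma)\log(\Pi_t/(1-\Pi_t)) - X^0_t$) gives
\[
\cL_{(X^0,\Pi)} f(\Psi(x,z)) = \cL_{(X^0,Z)}(f\circ\Psi)(x,z).
\]
Applying this to $U$ on $\cC$ and using \eqref{eq:filtered:unconstrained_PDE} yields the first line of \eqref{eq:filtered:dinkin_z:var_ineq}.

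\textbf{Step 2: Local $L^\infty$ bounds on the first derivatives of $\widehat{U}$.} From \eqref{eq:filtered:U_change_of_variables}, the chain rule gives
\[
\widehat{U}_x(x,z) = U_x(x,y) + U_y(x,y)\,\tfrac{\gamma}{\sigma} y(1-y), \qquad \widehat{U}_z(x,z) = U_y(x,y)\,\tfrac{\gamma}{\sigma} y(1-y),
\]
where $y = y(x,z) \in (0,1)$, so $y(1-y) \leq 1/4$. Remark~\ref{filtered:rmk:derivatives_bound} provides the global bounds $|U_x(x,y)| \leq \kappa_1(1 + |x|^{p-2})$ and $|U_y(x,y)| \leq \kappa_2$. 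It follows that on any bounded rectangle $[-N,N]^2 \subset \R^2$ one has $\|\widehat{U}_x\|_\infty + \|\widehat{U}_z\|_\infty \leq C_N$ for some constant $C_N > 0$.

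\textbf{Step 3: Local $L^\infty$ bound on $\widehat{U}_{xx}$.} Inside the open set $\widehat{\cS}_+ \cup \widehat{\cS}_-$, $\widehat{U}$ is locally constant, hence $\widehat{U}_{xx} \equiv 0$. Inside $\widehat{\cC}$, the classical PDE \eqref{eq:filtered:dinkin_z:var_ineq} can be solved for $\widehat{U}_{xx}$:
\[
\widehat{U}_{xx}(x,z) = \tfrac{2}{\sigma^2}\bigl(\delta\widehat{U}(x,z) - \mu(x,z)\widehat{U}_x(x,z) - q(x,z)\widehat{U}_z(x,z) - c'(x)\bigr).
\]
The coefficients $\mu, q$ are continuous on $\R^2$, the first derivatives of $\widehat{U}$ are locally bounded by Step~2, and $c' \in \dC(\R)$. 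Combined with the uniform bound $-K_+ \leq \widehat{U} \leq K_-$, this yields $\widehat{U}_{xx} \in L^\infty(\widehat{\cC} \cap K)$ for every compact $K \subset \R^2$. Because $\widehat{U} \in \dC^1(\R^2)$ and is piecewise $\dC^2$ with locally bounded classical second derivative on both $\widehat{\cC}$ and the interior of $\widehat{\cS}_\pm$, a standard distributional argument (integration by parts against a test function, using the $\dC^1$-matching across $\partial\widehat{\cC}$ to discard boundary terms) shows that the weak second derivative $\widehat{U}_{xx}$ coincides a.e.\ with the pointwise value defined above and below. Therefore $\widehat{U}_{xx} \in L^\infty_{loc}(\R^2)$.

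The main subtlety is Step~3 at the free boundary $\partial\widehat{\cC}$: the pointwise values of $\widehat{U}_{xx}$ differ on the two sides, so one must verify carefully that this jump does not produce a singular (distributional) component. This is precisely where the global $\dC^1$-regularity from Theorem~\ref{filtered:thm:regularity_up_to_boundary} is essential, since it guarantees that no measure-valued term appears when differentiating twice in the weak sense.
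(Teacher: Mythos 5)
Your proof is correct and follows essentially the same route as the paper's: push the regularity and the PDE through the $\dC^\infty$-diffeomorphism $\Psi$, use the equation to express $\widehat{U}_{xx}$ as a continuous function of $(\widehat{U},\widehat{U}_x,\widehat{U}_z)$ on $\widehat{\cC}$ (hence locally bounded there), note that $\widehat{U}_{xx}\equiv 0$ in the interior of $\widehat{\cS}_\pm$, and use the global $\dC^1$-regularity of $\widehat{U}$ to rule out a singular contribution on $\partial\widehat{\cC}$. The only differences are cosmetic: your Step~2 invokes the global derivative bounds of Remark~\ref{filtered:rmk:derivatives_bound} where mere continuity from $\widehat{U}\in\dC^1(\R^2)$ already gives local boundedness, and the paper establishes the ``no singular part'' step via a one-dimensional Lipschitz estimate on $\widehat{U}_x(\cdot,z_0)$ across the boundary rather than via a distributional integration-by-parts argument, but these are equivalent ways of recording the same fact.
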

\begin{proof}
The fact that $\widehat{U} \in \dC^1(\R^2) \cap \dC^2(\widehat{\cC})$ and it solves \eqref{eq:filtered:dinkin_z:var_ineq} follows easily by definition of $\widehat{U}$, $\widehat{\cC}$, Lemma~\ref{filtered:lemma:weak_solution} and Theorem \ref{filtered:thm:regularity_up_to_boundary}.
It remains to show that $\widehat{U}_{xx} \in L^\infty_{loc}(\R^2)$.
To accomplish that, first of all notice that $\widehat{U}_{xx}$ admits a continuous extension to the closure of $\widehat{\cC}$, which we still denote by $\widehat{U}_{xx}$.
As a matter of fact, for any $(x,z) \in \widehat{\cC}$, it holds
\[
\widehat{U}_{xx}(x,z) = \frac{2}{\sigma^2} \left( -q(x,z)\widehat{U}_z(x,z) - \mu(x,z) \widehat{U}_{x}(x,z) + \delta \widehat{U}(x,z) - c'(x) \right),
\]
and the right-hand side of the latter is continuous over $\R^2$ since $\widehat{U} \in \dC^1(\R^2)$.
Then, take $(x_o,z_o) \in \partial\widehat{\cC}$.
For any $x \geq \hat{b}_-(z_o)$, we have
\begin{multline*}
\vert \widehat{U}_x(x,z_o) - \widehat{U}_x(x_o,z_o) \vert \\
\leq \int_{x_o}^{\hat{b}_-(z_o)}\vert \widehat{U}_{xx}(x',z_o) \vert dx' +  \int_{\hat{b}_-(z_o)}^x \vert \widehat{U}_{xx}(x',z_o) \vert dx'  \leq \kappa(x_o,z_o)\vert x - x_0 \vert.
\end{multline*}
The claimed local boundedness of $\widehat{U}_{xx}$ is then proved, being this trivially satisfied in the interior of $\widehat{\cC}$, $\widehat{\cS}_+$ and $\widehat{\cS}_-$.
\end{proof}

Lemma \ref{filtered:lemma:dynkin_z:value_function} allows us to improve the regularity of $U$, in the sense described by the following lemma.
\begin{lemma}\label{filtered:lemma:integral_differential_operator}
$\cL_{(X^0,\Pi)}U$ exists for a.e. $(x,y) \in \cO$ and it belongs to $L^\infty_{loc}(\cO)$.
Moreover, it holds
\begin{equation}\label{eq:filtered:integral_differential_operator}
    \cL_{(X^0,\Pi)}U(x,y) = \cL_{(X^0,Z)}\widehat{U}\big(x,z(x,y)\big), \quad \text{for a.e.} \, (x,y) \in \cO.
\end{equation}
\end{lemma}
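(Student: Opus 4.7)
The plan is to exploit the PDE characterizations of $U$ and $\widehat{U}$ in their respective continuation regions, together with the negligibility of the boundary $\partial\cC$, to derive \eqref{eq:filtered:integral_differential_operator} for almost every $(x,y) \in \cO$. The key observation is that, because both $U$ and $\widehat{U}$ are either $\dC^2$-smooth or locally constant off a thin exceptional set, no genuine distributional computation is required: everything follows from standard chain rule and classical PDE identities on open sets.

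The first step I would carry out is to show that $\partial\cC$ is Lebesgue-null in $\cO$. This follows directly from Lemma \ref{filtered:lemma:regularity_boundary}: the barriers $a_\pm$ are non-increasing, so each has at most countably many discontinuities, and modulo countably many vertical segments (one for each jump) their graphs are the graphs of non-increasing functions, which are Lebesgue-null in $\R^2$. Hence it suffices to verify \eqref{eq:filtered:integral_differential_operator} on the three disjoint open sets $\cC$, $\mathrm{int}(\cS_+)$, and $\mathrm{int}(\cS_-)$.

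On $\mathrm{int}(\cS_\pm)$, $U$ is identically $\mp K_\pm$ and every derivative of $U$ vanishes, so $\cL_{(X^0,\Pi)}U = 0$ pointwise. Since $\Psi$ is a diffeomorphism mapping $\mathrm{int}(\widehat{\cS}_\pm)$ onto $\mathrm{int}(\cS_\pm)$ (cf.\ \eqref{eq:filtered:regions_z}), $\widehat{U}$ is likewise constant on $\mathrm{int}(\widehat{\cS}_\pm)$, so $\cL_{(X^0,Z)}\widehat{U}(x, z(x,y)) = 0$ there and both sides of \eqref{eq:filtered:integral_differential_operator} agree. On the continuation region $\cC$, Lemma \ref{filtered:lemma:weak_solution} gives $U \in \dC^2(\cC)$ with $\cL_{(X^0,\Pi)}U = \delta U - c'(x)$, while Lemma \ref{filtered:lemma:dynkin_z:value_function} gives $\widehat{U} \in \dC^2(\widehat{\cC})$ with $\cL_{(X^0,Z)}\widehat{U} = \delta\widehat{U} - c'(x)$; combining these with $U(x,y) = \widehat{U}(x,z(x,y))$ from \eqref{eq:filtered:U_hat_change_of_variables} immediately yields \eqref{eq:filtered:integral_differential_operator} on $\cC$.

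Putting everything together, $\cL_{(X^0,\Pi)}U$ will exist pointwise on $\cO \setminus \partial\cC$, and hence almost everywhere on $\cO$, coinciding with $\cL_{(X^0,Z)}\widehat{U}(x,z(x,y))$ there. Local boundedness will follow immediately: on any compact $K \subset \cO$, the common expression reads $\delta U(x,y) - c'(x)$ on $K \cap \cC$ (bounded, since $|U| \leq \max(K_+, K_-)$ and $c'$ is continuous) and vanishes on $K \cap \mathrm{int}(\cS_\pm)$. The main conceptual point, and the only subtle one, is precisely the handling of $\partial\cC$: because the second-order derivatives of $U$ cannot be controlled across the free boundary under the degeneracy of $\cL_{(X^0,\Pi)}$, one cannot hope for a genuine $\bbW^{2,\infty}_{loc}$ statement; the null-measure argument for $\partial\cC$ is what allows us to bypass this obstacle and reduce everything to classical computations inside each of the three open pieces of $\cO$.
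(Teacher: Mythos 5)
Your proof is correct and follows the same basic strategy as the paper's: work pointwise on the open pieces $\cC$ and $\mathrm{int}(\cS_\pm)$, where both $U$ and $\widehat U$ are classically smooth (resp.\ constant), establish the identity there, and observe that the free boundary $\partial\cC$ has zero Lebesgue measure. Two minor remarks. First, you make explicit the measure-zero argument for $\partial\cC$, which the paper leaves implicit in the ``a.e.'' wording; this is worth spelling out, although the monotonicity of $a_\pm$ is not strictly needed — the graph of any measurable function $(0,1)\to\R$ is already Lebesgue-null, and monotonicity only serves to control the (countable) vertical jump segments. Second, your local-boundedness argument is actually more self-contained than the paper's: you read $\cL_{(X^0,\Pi)}U = \delta U - c'$ off the PDE in $\cC$ and bound $|\delta U - c'|$ directly using $|U|\le K_+\vee K_-$ and the continuity of $c'$, whereas the paper routes through Lemma~\ref{filtered:lemma:dynkin_z:value_function} to first establish $\widehat U_{xx}\in L^\infty_{loc}(\R^2)$ and then conclude that $\cL_{(X^0,Z)}\widehat U\in L^\infty_{loc}$. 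Both arguments ultimately rest on the same PDE identity; yours merely short-circuits the detour through $\widehat U_{xx}$, which the paper keeps because it is useful elsewhere (cf.\ the proof of Lemma~\ref{filtered:lemma:candidate_potential:derivatives}).
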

\begin{proof}
As $U$ is constant on $\cO \setminus \overline{\cC}$ and invoking Theorem \ref{filtered:thm:regularity_up_to_boundary}, we deduce that $U \in \dC^2(\cO \setminus \partial\cC)$. Analogously, $\widehat{U} \in \dC^2(\R^2 \setminus \partial\widehat{\cC})$.
Moreover, as $\cL_{(X^0,\Pi)}$ and $\cL_{(X^0,Z)}$ are $\Psi$-diffeomorphic, it holds $\cL_{(X^0,\Pi)}U(x,y) = \cL_{(X^0,Z)}\widehat{U}\big(x,z(x,y)\big)$ for any $(x,y) \in \cO \setminus \partial\cC$.
By Lemma \ref{filtered:lemma:dynkin_z:value_function}, $\cL_{(X^0,\Pi)}\widehat{U}$ is defined for a.e. $(x,z) \in \R^2$ and it belongs to $L^\infty_{loc}(\R^2)$.
This concludes the proof.
\end{proof}

\subsubsection{Solution of the ergodic singular control problem}\label{sec:filtered:conclusion}
In this section, we conclude the study of the ergodic singular control problem, by showing that the assumptions of Theorem \ref{thm:HJB_sol_weak} are satisfied.
In particular, in Lemma \ref{filtered:lemma:candidate_potential:derivatives} we build the candidate pair $(V,\lambda)$ as prescribed by \eqref{eq:main:built_potential_weak} and \eqref{eq:main:built_value_weak}, we show that $V \in \bbW^{2,\infty}_{loc}(\cO) \cap \dC^2(\overline{\cC})$ and that $\lambda$ is continuous and bounded.
Then, in Lemma \ref{filtered:lemma:reflection} we build a candidate optimal control $\xi^\star$ as in \eqref{eq:thm:HJB_sol_weak:optimal_ctrl}.
Finally, in Theorem \ref{filtered:thm:optimal_control}, we verify the remaining assumptions, thus completely solving the ergodic singular stochastic control problem \eqref{eq:application:value}.

\smallskip
Recall that Hypothesis \ref{conj:main} is satisfied, so that, in particular, it holds $\sup_{y \in (0,1)}a_+(y) < \inf_{y \in (0,1)}a_-(y)$.
\begin{lemma}\label{filtered:lemma:candidate_potential:derivatives}
Let $\alpha \in (\sup a_+(y),\inf a_-(y) )$.
Consider the function $V$ given by \eqref{eq:main:built_potential_weak}.
It holds $V  \in \dC^1(\cO)$, and $V _{xy}$, $V _{xx}$ belong to $\dC(\cO)$ and $V _{yy} \in L^\infty_{loc}(\cO) \cap \dC(\overline{\cC})$.
In particular, its partial derivatives are given by
\begin{equation}\label{eq:filtered:candidate:derivatives}
\begin{aligned}
    V _{x}(x,y) & = \widehat{U}\big(x,z(x,y)\big) \\
    V _{xx}(x,y) & = \widehat{U}_{x}\big(x,z(x,y)\big) - \widehat{U}_{z}\big(x,z(x,y)\big), \\
    V _{xy}(x,y) & = \widehat{U}_{z}\big(x,z(x,y)\big)\frac{\sigma}{\gamma}\frac{1}{y(1-y)}, \\
    V _{y}(x,y) & =\frac{\sigma}{\gamma}\frac{1}{y(1-y)}\int_\alpha^x \widehat{U}_{z}(x',z(x',y)) dx' , \\
    V _{yy}(x,y) & = \frac{\sigma}{\gamma}\frac{2y-1}{y^2(1-y)^2} \int_\alpha^x \widehat{U}_{z}(x',z(x',y))dx' \\
    & + \frac{\sigma^2}{\gamma^2}\frac{1}{y^2(1-y)^2}\left( \widehat{U}_{z}\big(\alpha,z(\alpha,y)\big)  - \widehat{U}_{z}\big(x,z(x,y)\big) \right) \\
    & + \frac{\sigma^2}{\gamma^2}\frac{1}{y^2(1-y)^2}\left( \widehat{U}_{x}\big(\alpha,z(\alpha,y)\big) - \widehat{U}_{x}\big(x,z(x,y)\big) + \int_{\alpha}^{x}\widehat{U}_{xx}\big(x', z(x',y) \big)dx'\right).
\end{aligned}
\end{equation}
\end{lemma}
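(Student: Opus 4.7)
The formulas for $V_x$, $V_{xx}$, $V_{xy}$ and the corresponding regularity follow almost immediately from the fundamental theorem of calculus combined with the chain rule. By continuity of $U$, one has $V_x(x,y) = U(x,y)$, and by Theorem \ref{filtered:thm:regularity_up_to_boundary}, $U \in \dC^1(\cO)$, so $V \in \dC^1(\cO)$ and $V_{xx}, V_{xy} \in \dC(\cO)$. The explicit expressions are obtained via the identity $U(x,y) = \widehat{U}(x, z(x,y))$, with $z_x \equiv -1$ and $z_y(x,y) = \phi(y) \coloneqq \tfrac{\sigma}{\gamma y(1-y)}$: the chain rule gives $V_{xx} = \widehat{U}_x - \widehat{U}_z$ and $V_{xy} = \phi(y)\widehat{U}_z$, both evaluated at $(x, z(x,y))$. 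The formula for $V_y$ is then obtained by differentiating under the integral (justified by local boundedness of $U_y$ on $\cO$ and dominated convergence), yielding $V_y(x,y) = \int_\alpha^x U_y(x',y)\,dx' = \phi(y)\int_\alpha^x \widehat{U}_z(x', z(x',y))\,dx'$, continuous on $\cO$ by the same argument.

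The main difficulty lies in the formula for $V_{yy}$. Since $Z$ has vanishing diffusion coefficient, no a priori regularity is available for $\widehat{U}_{zz}$, and consequently $\partial_y V_y$ cannot be computed by a naive chain rule inside the integral. The plan is to exploit the symmetry of mixed derivatives: wherever second derivatives of $\widehat{U}$ exist classically (i.e., outside the Lebesgue-negligible set $\partial \widehat{\cC}$, by Lemma \ref{filtered:lemma:dynkin_z:value_function}), one has
\begin{equation*}
\frac{d}{dx'}\bigl[\widehat{U}_x(x', z(x',y)) + \widehat{U}_z(x', z(x',y))\bigr] = \widehat{U}_{xx}(x', z(x',y)) - \widehat{U}_{zz}(x', z(x',y)),
\end{equation*}
by summing $\frac{d}{dx'}\widehat{U}_x = \widehat{U}_{xx} - \widehat{U}_{xz}$ and $\frac{d}{dx'}\widehat{U}_z = \widehat{U}_{zx} - \widehat{U}_{zz}$ and using $\widehat{U}_{xz} = \widehat{U}_{zx}$. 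Integrating this identity in $x'$ from $\alpha$ to $x$ yields
\begin{align*}
\int_\alpha^x \widehat{U}_{zz}(x', z(x',y))\,dx'
& = \int_\alpha^x \widehat{U}_{xx}(x', z(x',y))\,dx' - \widehat{U}_x(x, z(x,y)) - \widehat{U}_z(x, z(x,y)) \\
& \quad + \widehat{U}_x(\alpha, z(\alpha,y)) + \widehat{U}_z(\alpha, z(\alpha,y)),
\end{align*}
whose right-hand side is well-defined since $\widehat{U}_x, \widehat{U}_z \in \dC(\R^2)$ and $\widehat{U}_{xx} \in L^\infty_{loc}(\R^2)$. Combining this with the formal product-rule identity $V_{yy} = \phi'(y) \int_\alpha^x \widehat{U}_z\,dx' + \phi(y)^2 \int_\alpha^x \widehat{U}_{zz}\,dx'$, and substituting $\phi'(y) = \tfrac{\sigma(2y-1)}{\gamma y^2(1-y)^2}$ and $\phi(y)^2 = \tfrac{\sigma^2}{\gamma^2 y^2(1-y)^2}$, produces the announced formula.

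The heuristic step above is the crux of the argument and must be made rigorous by approximation, since $\widehat{U}_{zz}$ (and even $\widehat{U}_{xz}$) need not exist in $L^\infty_{loc}$. The plan is to mollify $\widehat{U}$ by $\widehat{U}^\epsilon \coloneqq \widehat{U} \ast \rho_\epsilon$, for which the identities hold pointwise, and write the analogous expression for the function $V^\epsilon$ defined by \eqref{eq:main:built_potential_weak} with $\widehat{U}^\epsilon$ in place of $\widehat{U}$. Passing to the limit as $\epsilon \downarrow 0$ uses the locally uniform convergences $\widehat{U}^\epsilon \to \widehat{U}$, $\widehat{U}^\epsilon_x \to \widehat{U}_x$, $\widehat{U}^\epsilon_z \to \widehat{U}_z$ (coming from $\widehat{U} \in \dC^1(\R^2)$), together with $\widehat{U}^\epsilon_{xx} \to \widehat{U}_{xx}$ a.e.\ with uniform $L^\infty_{loc}$-bound, which enables dominated convergence for the integral of $\widehat{U}^\epsilon_{xx}$. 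Once the formula for $V_{yy}$ is established, the regularity claims are read off directly: the first summand and the pointwise boundary evaluations are continuous on $\cO$, while the $\widehat{U}_{xx}$-integral is locally bounded, hence $V_{yy} \in L^\infty_{loc}(\cO)$, and is continuous on $\overline{\cC}$ thanks to the continuous extension of $\widehat{U}_{xx}$ up to $\partial\widehat{\cC}$ exhibited in the proof of Lemma \ref{filtered:lemma:dynkin_z:value_function}; this yields $V_{yy} \in L^\infty_{loc}(\cO) \cap \dC(\overline{\cC})$ as claimed.
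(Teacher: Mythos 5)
Your proof is correct and reaches exactly the paper's formulas, but by a genuinely different route. The paper's argument is purely a change of variables: it rewrites $V(x,y) = \int_\alpha^x \widehat{U}(x',z(x',y))\,dx'$ as $\int_{z(x,y)}^{z(\alpha,y)} \widehat{U}(g(y)-q,q)\,dq$ with $g(y)=\tfrac{\sigma}{\gamma}\log\tfrac{y}{1-y}$, so that $y$-differentiation enters $\widehat U$ only through the first argument and the moving endpoints. Differentiating twice by the Leibniz rule then produces only $\widehat U_x$ and $\widehat U_{xx}$ terms (plus boundary evaluations) and never $\widehat U_{zz}$ or $\widehat U_{xz}$; a further cancellation using $\int_\alpha^x \widehat U_x\,dx' = \widehat U(x,z(x,y)) - \widehat U(\alpha,z(\alpha,y)) + \int_\alpha^x \widehat U_z\,dx'$ yields the clean formulas. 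Your approach instead keeps the original integral, so that the formal Leibniz computation produces a term $\phi^2\int_\alpha^x\widehat U_{zz}$ which must then be eliminated; you do so by integrating the identity $\tfrac{d}{dx'}[\widehat U_x + \widehat U_z] = \widehat U_{xx} - \widehat U_{zz}$ (which uses $z_{x'}=-1$ and Schwarz symmetry $\widehat U_{xz}=\widehat U_{zx}$), and make everything rigorous by mollification and passage to the limit. Both arguments hinge on the same key structural feature, namely $z_{x'}\equiv -1$, i.e., that $x'+z(x',y)$ is constant along the $x'$-integration; the paper exploits it by a substitution, you exploit it through the total $x'$-derivative. The paper's route avoids a mollification step and is somewhat shorter. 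Your route is more systematic and makes explicit the precise regularity being used (uniform $L^\infty_{loc}$ bound on $\widehat U_{xx}$, local uniform convergence of $\widehat U^\epsilon$ and $D\widehat U^\epsilon$). Two small points worth making explicit if you write this up: first, that the limit of $V^\epsilon_{yy}$ is indeed $V_{yy}$ follows because $V^\epsilon\to V$ locally uniformly while $V^\epsilon_{yy}$ converges in $L^p_{loc}$, so the limit is identified as the distributional second derivative; second, the a.e.\ convergence $\widehat U^\epsilon_{xx}\to\widehat U_{xx}$ must be applied along the one-dimensional slices $x'\mapsto (x',z(x',y))$, which holds because these lines meet $\partial\widehat{\cC}$ in finitely many points for each $y$ (the free boundaries $\widehat b_\pm$ are strictly decreasing), and $\widehat U_{xx}$ is continuous away from $\partial\widehat{\cC}$.
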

\begin{proof}
By exploiting the relationship \eqref{eq:filtered:U_hat_change_of_variables} between $U$ and $\widehat{U}$ and relying on the change of variable $q=z(x',y) = \frac{\sigma}{\gamma}\log\big(\frac{y}{1-y}\big)-x'$, we deduce the following representation for $V$:
\begin{equation}\label{eq:filtered:lemma_regularity_candidate:change_of_variables}
    V (x,y) = \int_\alpha^x \widehat{U}\big(x',z(x',y)\big)dx' = \int_{\frac{\sigma}{\gamma}\log(\frac{y}{1-y}) -x}^{\frac{\sigma}{\gamma}\log(\frac{y}{1-y}) - \alpha}  \widehat{U}\big(\frac{\sigma}{\gamma}\log(\frac{y}{1-y}) - q,q\big)dq.
\end{equation}
We notice that the { limits of integration} in the second equality of \eqref{eq:filtered:lemma_regularity_candidate:change_of_variables} are given by $z(x,y)$ and $z(x,\alpha)$ respectively.
By noticing that $x'\left(z(x,y),y \right) = x$, using the chain rule and the equalities 
\begin{align*}
& z_{x}(x,y) = -1, \quad z_{y}(x,y) = \frac{\sigma}{\gamma} \frac{1}{y(1-y)}, \quad z_{yy}(x,y) = \frac{\sigma}{\gamma} \frac{2y - 1}{y^2(1-y)^2},
\end{align*}
it follows from direct computations that the $x$-partial derivatives take the form described by \eqref{eq:filtered:candidate:derivatives}.
As for the derivatives with respect to $y$, we have
\begin{equation}\label{eq:filtered:candidate:y_derivatives1}
\begin{aligned}
    V _{y} & (x,y) \\
    & = \frac{\sigma}{\gamma}\frac{1}{y(1-y)}\left( \widehat{U}\big(\alpha,z(\alpha,y)\big) - \widehat{U}\big(x,z(x,y)\big) + \int_{z(x,y)}^{z(\alpha,y)}\widehat{U}_{x}\big(\frac{\sigma}{\gamma}\log(\frac{y}{1-y}) -q,q\big)dq\right), \\
    V _{xy} & (x,y) =  \frac{\sigma}{\gamma}\frac{1}{y(1-y)} \widehat{U}_{z}\big(x,z(x,y)\big), \\
    V _{yy} & (x,y) \\
    & = \frac{\sigma}{\gamma}\frac{2y-1}{y^2(1-y)^2}\left( \widehat{U}\big(\alpha,z(\alpha,y)\big) - \widehat{U}\big(x,z(x,y)\big) +\int_{z(x,y)}^{z(\alpha,y)}\widehat{U}_{x}\big(\frac{\sigma}{\gamma}\log(\frac{y}{1-y}) -q,q\big)dq\right) \\
    & + \frac{\sigma^2}{\gamma^2}\frac{1}{y^2(1-y)^2}\left( \widehat{U}_{z}\big(\alpha,z(\alpha,y)\big)  - \widehat{U}_{z}\big(x,z(x,y)\big) \right) \\
    & +\frac{\sigma^2}{\gamma^2}\frac{1}{y^2(1-y)^2}\left( \widehat{U}_{x}\big(\alpha,z(\alpha,y)\big) - \widehat{U}_{x}\big(x,z(x,y)\big) + \int_{z(x,y)}^{z(\alpha,y)}\widehat{U}_{xx}\big(\frac{\sigma}{\gamma}\log(\frac{y}{1-y}) -q,q\big)dq\right).
\end{aligned}    
\end{equation}
To conclude, we notice that the following equality holds:
\begin{align*}
    \int_{z(x,y)}^{z(\alpha,y)} & \widehat{U}_{x}\big(\frac{\sigma}{\gamma}\log(\frac{y}{1-y}) -q,q\big)dq = \int_\alpha^x \widehat{U}_{x}(x',z(x',y))dx'  \\
    & = \int_\alpha^x\Big(U_x(x',y)dx' - \widehat{U}_{z}(x',z(x',y))z_{x}(x',y)\Big)dx' \\
    & = \widehat{U}\big(x,z(x,y)\big)-\widehat{U}\big(\alpha,z(\alpha,y)\big) + \int_\alpha^x\widehat{U}_{z}(x',z(x',y)) dx'.
\end{align*}
Using this identity in \eqref{eq:filtered:candidate:y_derivatives1} yields to \eqref{eq:filtered:candidate:derivatives}.
The regularity properties of $V $ and its derivatives follow directly from the ones of $\widehat{U}$ (cf. Lemma \ref{filtered:lemma:dynkin_z:value_function}).
\end{proof}

We now build an optimal control $\xi^\star=(\xi^{\star,+},\xi^{\star,-}) \in \cB$.
Consider the process $\xi^\star=(\xi^{\star,+},\xi^{\star,-})$ so that the pair $(X^{\xi^\star},\xi^\star)$ solves the Skorohod reflection problem:
\begin{equation}\label{eq:filtered:skorohod_reflection}
\left\{ \begin{aligned}
    & a_+(\Pi_t) \leq X^{\xi^\star}_t \leq a_-(\Pi_t), \quad \P_{x,y}\text{-a.s.,} \;\; \text{for almost all }t \geq 0,\\
    & \xi^{\star,+}_t = \int_0^t \ind_{\{ X^{\xi^\star}_{s-} \leq a_+(\Pi_s) \}}d\xi^{\star,+}_s, \quad \xi^{\star,-}_t = \int_0^t \ind_{\{ X^{\xi^\star}_{s-} \geq a_-(\Pi_s) \}}d\xi^{\star,-}_s, \quad \P_{x,y}\text{-a.s.,} \;\; \forall t \geq 0, \\
    & \int_0^{\Delta\xi^{\star,+}_t} \ind_{\{(X^{\xi^{\star,+}}_t + z,\Pi_t) \in \cC\}} dz + \int_0^{\Delta\xi^{\star,-}_t} \ind_{\{(X^{\xi^\star}_t - z,\Pi_t) \in \cC\}} dz = 0, \quad \P_{x,y}\text{-a.s.,} \;\; \forall t \geq 0.
\end{aligned} \right.
\end{equation}
where $\Delta\xi^{\star,\pm}_t = \xi^{\star,\pm}_t - \xi^{\star,\pm}_{t-}$.

\smallskip
By using the similar techniques as in \cite[Section 4.3]{federico_pham2014sicon} and \cite[Section 6.1]{federico2023inventory_unknown_demand}, it is possible to explicitly build the solution to the Skorohod reflection problem \eqref{eq:filtered:skorohod_reflection}, as shown by the following Lemma:
\begin{lemma}\label{filtered:lemma:reflection}
There exists a solution to  \eqref{eq:filtered:skorohod_reflection}. Moreover, $\xi^\star$ is admissible.
\end{lemma}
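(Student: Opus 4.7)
The plan is to construct the process $\xi^\star$ pathwise by a deterministic two-sided Skorokhod map at the time-varying boundaries $a_\pm(\Pi_t)$, and then check admissibility using boundedness of the reflected trajectory.

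First, I would exploit the structural properties of the boundaries. By Lemma \ref{filtered:lemma:regularity_boundary}, $a_\pm$ are non-increasing, one-sided continuous, and satisfy $\underline{a}_+ \leq a_+(y) < a_-(y) \leq \overline{a}_-$ uniformly in $y \in (0,1)$. Since $\Pi$ has continuous sample paths, the moving boundaries $(a_\pm(\Pi_t))_{t\geq 0}$ are bounded $\bbF^{X^0}$-adapted processes with càdlàg trajectories, and $a_-(\Pi_t) - a_+(\Pi_t)$ is bounded below by a positive constant. I would then define the initial jumps $\xi^{\star,\pm}_0 = (a_\pm(\Pi_0) - x)^{\pm}$ to place $X^{\xi^\star}_0$ into $[a_+(\Pi_0), a_-(\Pi_0)]$, consistent with the third line of \eqref{eq:filtered:skorohod_reflection}.

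Next, on $(0,\infty)$ I would solve the doubly reflected SDE
\[
dX^{\xi^\star}_t = (m(\Pi_t) - \delta X^{\xi^\star}_t)\,dt + \sigma\,dB_t + d\xi^{\star,+}_t - d\xi^{\star,-}_t,
\]
with the constraint $a_+(\Pi_t) \leq X^{\xi^\star}_t \leq a_-(\Pi_t)$ and the minimality condition on the support of $\xi^{\star,\pm}$. Following the strategy of \cite[Section 4.3]{federico_pham2014sicon} and \cite[Section 6.1]{federico2023inventory_unknown_demand}, the construction proceeds by applying a pathwise Skorokhod map to the unreflected semimartingale $X^0$ together with the exogenous càdlàg barriers $a_\pm(\Pi_\cdot)$. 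To accommodate the fact that $a_\pm$ are only monotone and one-sided continuous, I would approximate $a_\pm$ by smooth non-increasing functions $a_\pm^n$ with $a_+^n \uparrow a_+$ and $a_-^n \downarrow a_-$, solve the classical two-sided reflected SDE with moving smooth barriers for each $n$ (where existence and uniqueness are standard, e.g. via Lipschitz fixed-point arguments on the Skorokhod map), and pass to the limit as $n \to \infty$ using monotone/compactness arguments for the reflection local times. Adaptedness to $\bbF^{X^0} = \bbF^B$ follows from the adaptedness of the barriers and of $X^0$.

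Finally, admissibility is immediate: since the constructed process satisfies $\underline{a}_+ \leq X^{\xi^\star}_t \leq \overline{a}_-$ for all $t > 0$, $\P_{x,y}$-a.s., we have $|X^{\xi^\star}_t| \leq M := \max\{|\underline{a}_+|, |\overline{a}_-|\}$ uniformly in $t$ and $\omega$, which yields $\varlimsup_{T\to\infty} T^{-1} \E_{x,y}[|X^{\xi^\star}_T|] = 0$. The $\bbF^B$-adaptedness inherited from the construction, together with the finiteness of the expected total variation of $\xi^\star$ on bounded intervals (a consequence of standard estimates for reflected SDEs in a bounded strip driven by a Brownian noise), confirms $\xi^\star \in \cB$. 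The main technical difficulty is the potential non-continuity of $a_\pm$, which is precisely what forces the approximation step; the bounded separation $\inf_y a_-(y) - \sup_y a_+(y) > 0$ obtained in Lemma \ref{filtered:lemma:regularity_boundary} ensures that this limiting procedure yields a well-defined non-degenerate reflection rather than a collapsed boundary.
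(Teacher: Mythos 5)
Your overall strategy---pathwise two-sided Skorokhod reflection at the moving barriers $a_\pm(\Pi_t)$, with admissibility from the uniform bound $\underline{a}_+ \le X^{\xi^\star} \le \overline{a}_-$---is the right idea and matches the paper's conclusion, and the admissibility argument is essentially identical. However, the existence part takes a genuinely different route, and as written it leaves real gaps.

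The paper does not approximate the barriers. It first performs the change of variables $d\bar{\xi}^\pm_t = e^{\delta t} d\xi^\pm_t$, $\Lambda_t = x + \bar{\xi}^+_t - \bar{\xi}^-_t$, which converts the drift-coupled constraint $a_+(\Pi_t) \le X^{\xi^\star}_t \le a_-(\Pi_t)$ into a pure two-sided Skorokhod problem for the finite-variation process $\Lambda$ between exogenous, $\bbF^{B}$-adapted barriers $\nu^\pm_t = e^{\delta t}(a_\pm(\Pi_t)-\cX^0_t)$. It then constructs $\bar{\xi}^\star$ by an explicit recursion on the stopping times at which $\Lambda^k$ touches one of the two barriers, alternating reflections upward and downward, in the style of \cite[Section 4.3]{federico_pham2014sicon}. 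The key point that makes this alternation well-posed is precisely $\inf_y(a_-(y)-a_+(y))>0$, which guarantees $\{\tau_0^+ = \tau_0^-\}=\{\tau_0 = \infty\}$, so the two one-sided reflections never collide. This recursive construction handles the one-sided continuity of $a_\pm$ directly: the minimality-of-jump condition (the third line of \eqref{eq:filtered:skorohod_reflection}) is built into each step, and no smoothness of the barriers is needed.

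Your smooth-approximation scheme, by contrast, glosses over three points. First, you claim $a_\pm(\Pi_t)$ has c\`adl\`ag paths; this is not obvious, because $a_-$ is only right-continuous and non-increasing while $\Pi$ is continuous but not monotone, so the composition can have jumps of the ``wrong'' sidedness depending on the direction of the crossing --- the paper is careful to record only that $\nu^+$ is left-continuous and $\nu^-$ is right-continuous, not that they are c\`adl\`ag. Second, you invoke ``monotone/compactness arguments for the reflection local times'' without establishing convergence of the approximating reflected processes or of their boundary local times; for discontinuous barriers this is not automatic, since jump sizes of $\xi^{\star,\pm}$ depend sensitively on the barrier's discontinuity set. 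Third, and most substantively, you never verify that the limiting control inherits the flat-off and minimal-jump conditions of \eqref{eq:filtered:skorohod_reflection}; those are exactly the properties that make $\xi^\star$ usable in the verification argument, and they are not preserved by weak limits without further work. So your approach is a plausible alternative in spirit, but as presented it trades the paper's elementary-but-explicit iteration for a limiting argument whose convergence you have not justified. The paper's transformation-plus-recursion avoids all three difficulties at once and is the more economical route here.
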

\begin{proof}
For any $\xi \in \cB$, define $(\Lambda,\bar{\xi})$ by setting
\begin{equation}\label{filtered:lemma:reflection:alternative_state}
d\bar{\xi}^\pm_t = e^{\delta t} d\xi^\pm_t, \quad \Lambda_t = x + \bar{\xi}^+_t - \bar{\xi}^-_t.
\end{equation}
By using $\bar{\xi}$, we represent $X^\xi$ as 
\[
X^{\xi}_t = e^{-\delta t}x + \cX^0_t + e^{-\delta t} \left( \int_0^t e^{\delta s} d\xi^+_t - \int_0^t e^{\delta s} d\xi^+_t \right) =  e^{-\delta t}x + \cX^0_t + e^{-\delta t} \bar{\xi}^+_t - e^{-\delta t} \bar{\xi}^-_t ,
\]
where $\cX^0$ denotes the uncontrolled process starting from $x=0$.
We then express the constraint $a_+(\Pi_t) \leq X^{\xi^\star}_t \leq a_-(\Pi_t)$ as
\begin{equation}\label{eq:filtered:reflection:stochastic_constraint}
    \nu^+_t \coloneqq e^{\delta t}\left( a_+(\Pi_t) - \cX^0_t \right)  \leq x + \bar{\xi}^{\star,+}_t - \bar{\xi}^{\star,-}_t \leq e^{\delta t}\left( a_-(\Pi_t) - \cX^0_t \right) =: \nu^-_t,
\end{equation}
where $\nu^\pm=(\nu^\pm_t)_{t \geq 0}$ are respectively left-continuous and right-continuous adapted processes.
Thus, the reflection problem \eqref{eq:filtered:skorohod_reflection} can be then restated equivalently in terms of $(\Lambda,\bar{\xi}^\star)$ as
\begin{equation}\label{filtered:lemma:reflection:alternative_reflection}
\left\{ \begin{aligned}
    & \nu^+_t \leq \Lambda_t \leq \nu^-_t, \quad \P_{x,y}\text{-a.s.,} \;\;  \text{for almost all }t \geq 0,\\
    & \bar{\xi}^{\star,+}_t = \int_0^t \ind_{\{ \Lambda_{s-} \leq \nu^+_s \}}d\bar{\xi}^{\star,-}_s, \quad \bar{\xi}^{\star,-}_t = \int_0^t \ind_{\{ \Lambda_{s-} \geq \nu^-_s \}}d\bar{\xi}^{\star,-}_s, \quad \P_{x,y}\text{-a.s.,} \;\;  \forall t \geq 0, \\
    & \int_0^{\Delta\bar{\xi}^{\star,+}_t} \ind_{\{ \nu^+_t <  \Lambda_t + z < \nu^-_t \}} dz + \int_0^{\Delta\bar{\xi}^{\star,-}_t} \ind_{\{ \nu^+_t <  \Lambda_t - z < { \nu^-_t }\}} dz = 0, \quad \P_{x,y}\text{-a.s.,} \;\; \forall t \geq 0.
\end{aligned} \right.
\end{equation}
We build the process $\bar{\xi}^\star = (\bar{\xi}^{\star,+},\bar{\xi}^{\star,-})$ so that \eqref{filtered:lemma:reflection:alternative_reflection} is satisfied.
Then, it will be enough to set
\[
\xi^{\star,\pm}_t = \int_0^t e^{-\delta s} d\bar{\xi}^{\star,\pm}_s,
\]
as the pair $(X^{\xi^\star},\xi^\star)$ satisfies \eqref{eq:filtered:skorohod_reflection} by construction.

\smallskip
To this extent, consider the stopping times
\begin{equation}\label{filtered:lemma:existence_skorohod_reflection:tau_0}
\begin{aligned}
    & \tau^+_0 \coloneqq \inf\{ t \geq 0 : \, x < \nu^+_t \} = \inf\{ t \geq 0 : \,  { e^{-\delta t} x + \cX^0_t <  a_+(\Pi_t) }\}, \\
    & \tau^-_0 \coloneqq \inf\{ t \geq 0 : \, x > \nu^-_t \} = \inf\{ t \geq 0 : \, { e^{-\delta t} x + \cX^0_t  > a_-(\Pi_t) }\} , \\
    & \tau_0 \coloneqq \tau^+_0 \land \tau^-_0.
\end{aligned}
\end{equation}
Notice that, because $\inf_{y \in (0,1)}\big( a_-(y) - a_+(y) \big) >0$ by Lemma \ref{filtered:lemma:regularity_boundary}, we have $\{ \tau_0^+ = \tau_0^-\}=\{\tau_0=\infty\}$.
Let $\Omega_+ = \{\tau^+_0 < \tau^-_0 \}$, $\Omega_- = \{\tau^-_0 < \tau^+_0 \}$ and $\Omega_\infty = \{\tau_0 = \infty \}$.
We recursively define $\bar{\xi}^\star$.
Set $\Lambda^0_t = x$ for every $t \geq 0$, and for every $k \geq 1$, define:
\begin{align*}
\text{If $k\geq 1$ is odd,} \quad \Lambda^k_t &:=
\begin{cases}
x, \,\, & \text{on}\,\, \Omega_{\infty}, \\
x + \max_{s \in [\tau_{k-1},t]}\big(\nu^+_s - x \big)^+, \,\, & \text{on}\,\, \Omega_{+}, \\
x + \min_{s \in [\tau_{k-1},t]}\big(\nu^-_s - x \big)^-, \,\, & \text{on}\,\, \Omega_{-}, 
\end{cases} \\
\text{with} \quad \tau_k &:=
\begin{cases}
\infty, \,\, & \text{on}\,\,\Omega_{\infty}, \\
\inf\{t\geq \tau_{k-1}:\, \Lambda^k_t > \nu^-_t \},\,\, & \text{on}\,\, \Omega_{+},\\
\inf\{t\geq \tau_{k-1}:\, \Lambda^k_t < \nu^+_t \},\,\, & \text{on}\,\, \Omega_{-}.
\end{cases} \\
\text{If $k\geq 2$ is even,} \quad \Lambda^k_t &:=
\begin{cases} 
x, \,\, & \text{on}\,\, \Omega_{\infty}, \\
x + \max_{s \in [\tau_{k-1},t]}\big(\nu^+_s - x \big)^+, \,\, & \text{on}\,\, \Omega_{-}, \\
x + \min_{s \in [\tau_{k-1},t]}\big(\nu^-_s - x \big)^-, \,\, & \text{on}\,\, \Omega_{+}, 
\end{cases} \\
\text{with} \quad \tau_k &:=
\begin{cases} 
\infty, \,\, & \text{on}\,\, \Omega_{\infty}, \\
\inf\{t\geq \tau_{k-1}:\, \Lambda^k_t > \nu^-_t \},\,\, & \text{on}\,\, \Omega_{-},\\
\inf\{t\geq \tau_{k-1}:\, \Lambda^k_t < { \nu^+_t} \},\,\, & \text{on}\,\, \Omega_{+}.
\end{cases}
\end{align*}
In light of these definitions, and setting $\tau_{-1} \coloneqq 0$, one can then proceed as in \cite[Section 4.3]{federico_pham2014sicon} in order to conclude that the pair
\begin{equation}
    \Lambda^{\bar{\xi}^\star}_t = \sum_{k=0}^\infty \ind_{[\tau_{k-1},\tau_k)}(t)\Lambda^k_t, \quad \bar{\xi}^\star_t = \Lambda^{\bar{\xi}^\star}_t - x
\end{equation}
is a solution to the reflection problem \eqref{filtered:lemma:reflection:alternative_reflection}.
{ The minimality properties in the second and third equation of \eqref{filtered:lemma:reflection:alternative_reflection}} follow directly from \cite[Section 4.3]{federico_pham2014sicon} (see in particular Lemma 4.11 therein).
Finally, we notice that, thanks to the boundedness of $U$, we have $\vert V(x,y) \vert \leq \kappa (1+\vert x \vert)$.
Moreover, as the control $\xi^\star$ keeps the process in $[a_+(\Pi_t),a_-(\Pi_t)]$ and $\underline{a}_+ \leq a_+(y) < a_-(y) \leq \overline{a}_-$ for any $y \in (0,1)$ by Lemma \ref{filtered:lemma:regularity_boundary}, we have that $X^{\xi^\star}_T$ is uniformly bounded, $\P_{x,y}$-a.s. This concludes the proof.
\end{proof}

Before showing that $\xi^\star$ is optimal for the original singular control problem, we show that $\Pi$ admits a stationary distribution. By Remark \ref{rmk:ergodicity}, this entails that the problem admits a unique value $\lambda^\star$ regardless of the initial data.

\begin{lemma}\label{filtered:lemma:stationary_distribution}
The process $\Pi$ admits a stationary distribution.
\end{lemma}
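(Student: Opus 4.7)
The plan is to exploit the fact that $\Pi$ is a one-dimensional diffusion on $(0,1)$ with smooth coefficients and non-degenerate volatility on the interior, and therefore a classical criterion based on the speed measure applies. I would invoke the standard result (see, e.g., \cite[Chapter 15]{karlin1981second} or Borodin-Salminen) that such a diffusion admits a unique stationary distribution if and only if both boundaries are either entrance or inaccessible, and the speed measure has finite total mass on the interior. Since Feller's test has already been quoted to ensure that $0$ and $1$ are entrance-not-exit for $\Pi$, the only remaining verification is the finiteness of the speed measure.

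First, I would compute the scale density $s(y)$ and speed density $m(y)$ associated to the SDE
\[
d\Pi_t = \big(\lambda_2 - (\lambda_1+\lambda_2)\Pi_t\big)dt + \gamma \Pi_t(1-\Pi_t)dB_t.
\]
A partial-fraction decomposition yields
\[
\int^y \frac{2\big(\lambda_2 - (\lambda_1+\lambda_2)u\big)}{\gamma^2 u^2(1-u)^2}\,du = -\frac{2\lambda_2}{\gamma^2 y} - \frac{2\lambda_1}{\gamma^2(1-y)} + \frac{2(\lambda_2-\lambda_1)}{\gamma^2}\log\frac{y}{1-y} + C,
\]
so that (up to a multiplicative constant)
\[
s(y) = \exp\!\left(\frac{2\lambda_2}{\gamma^2 y} + \frac{2\lambda_1}{\gamma^2(1-y)}\right)\left(\frac{y}{1-y}\right)^{-2(\lambda_2-\lambda_1)/\gamma^2}, \quad m(y) = \frac{1}{\gamma^2 y^2(1-y)^2\, s(y)}.
\]

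Then I would check integrability of $m$ on $(0,1)$. The factors $\exp(2\lambda_2/(\gamma^2 y))$ and $\exp(2\lambda_1/(\gamma^2(1-y)))$ in $s(y)$ diverge exponentially as $y\downarrow 0$ and $y\uparrow 1$ respectively, which dominate any polynomial behavior coming from the $x^2(1-x)^2$ factor and the power $y^{2(\lambda_2-\lambda_1)/\gamma^2}$. Consequently $m(y)$ decays super-exponentially at both endpoints, and $\int_0^1 m(y)\,dy < \infty$. Normalizing, the probability measure
\[
\pi^\Pi(dy) = \frac{m(y)}{\int_0^1 m(u)\,du}\,dy
\]
is invariant for $\Pi$. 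The main (and only) technical point is the boundary analysis of $m$: combining the entrance-not-exit property with the explicit exponential factors in $s(y)$ is what rules out concentration of mass at the endpoints. Once this is done, classical one-dimensional ergodic theory (or, alternatively, a Krylov--Bogoliubov tightness argument on the bounded state space $[0,1]$ combined with the strong Feller property of $\Pi$) delivers the stated existence of a stationary distribution.
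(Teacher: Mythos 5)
Your proof is correct and rests on the same criterion as the paper's (finiteness of the speed measure of the one-dimensional diffusion $\Pi$, combined with the fact that the boundaries $0$ and $1$ are inaccessible, so that the normalized speed measure is the stationary law; the paper cites Kallenberg, Lemma~23.19, for this). Where you differ is in the computation of $\int_0^1 m(y)\,dy < \infty$. You perform the full partial-fraction decomposition of $\frac{\lambda_2-(\lambda_1+\lambda_2)u}{u^2(1-u)^2} = \frac{\lambda_2}{u^2} + \frac{\lambda_2-\lambda_1}{u} + \frac{\lambda_2-\lambda_1}{1-u} - \frac{\lambda_1}{(1-u)^2}$, which yields a closed-form expression for $m(y)$ containing the two super-exponentially decaying factors $\exp(-2\lambda_2/(\gamma^2 y))$ and $\exp(-2\lambda_1/(\gamma^2(1-y)))$; finiteness at both endpoints follows immediately and symmetrically from these. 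The paper instead leaves the exponent as an unevaluated integral, bounds the $\int_x^{1/2} \frac{1}{y(1-y)^2}dy$ piece by $\kappa\int_x^{1/2}\frac{dy}{y}$ using $(1-y)^{-2} \leq 4$ on $(0,1/2)$ (there is a typo there, it reads $(1-y)^2\ge4$ and the constant placement is off, though this does not affect the conclusion) and then handles the other boundary by the symmetry $\Pi \leftrightarrow 1-\Pi$ with $\lambda_1$ and $\lambda_2$ swapped. Your approach is more self-contained and transparent: the explicit antiderivative makes both boundary rates visible at once and avoids the auxiliary comparison and symmetry argument, at the modest cost of carrying out a three-term partial-fraction split.
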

\begin{proof}
By \cite[Lemma 23.19]{kallenberg_foundations} it is enough to verify that the density of the speed measure $m'(x)$ of the process $\Pi$ is integrable over the state space $[0,1]$.
By definition, we have
\[
m'(x) = \frac{2}{\gamma^2 x^2(1-x)^2}e^{ 2\int_a^x \frac{\lambda_2 - (\lambda_1+\lambda_2)y}{\gamma^2 y^2(1-y)^2} dy}.
\]
where $a \in (0,1)$.
Without loss of generality, we choose $a=\frac{1}{2}$.
We first verify integrability in a neighborhood of $0$.
For $\epsilon > 0$, we have
\begin{align*}
  \int_0^\epsilon & m'(x) dx = \int_0^\epsilon \frac{2}{\gamma^2 x^2(1-x)^2} e^{ -\frac{2\lambda_2}{\gamma^2}\int_x^{\frac{1}{2}} \frac{1}{ y^2(1-y)^2} dy } e^{\frac{2(\lambda_1+\lambda_2)}{\gamma^2}\int_x^{\frac{1}{2}}\frac{1}{y (1-y)^2} { dy}} dx\\
  & \leq \int_0^\epsilon \frac{2}{\gamma^2 x^2(1-x)^2} e^{ -\frac{2\lambda_2}{\gamma^2}\int_x^{\frac{1}{2}} \frac{1}{ y^2(1-y)^2} dy } e^{ { \frac{8(\lambda_1+\lambda_2)}{\gamma^2} }\int_x^{\frac{1}{2}}\frac{1}{y}{ dy}} dx \\
  & \leq \int_0^\epsilon \frac{2}{\gamma^2 x^2(1-x)^2} e^{ -\frac{2\lambda_2}{\gamma^2}\int_x^{\frac{1}{2}} \frac{1}{ y^2(1-y)^2} dy } e^{{ \frac{8(\lambda_1+\lambda_2)}{\gamma^2} } ( \ln( \frac{1}{2}) - \ln(x) )  } dx \\
  & \leq \kappa \int_0^\epsilon \frac{2}{\gamma^2 x^2(1-x)^2} e^{ -\frac{2\lambda_2}{\gamma^2}\int_x^{\frac{1}{2}} \frac{1}{ y^2(1-y)^2} dy } e^{\ln \big( x^{-{ \frac{8(\lambda_1+\lambda_2)}{\gamma^2} }} \big ) } dx \\
  & = \kappa\int_0^\epsilon \frac{2}{\gamma^2 x^2(1-x)^2} e^{ -\frac{2\lambda_2}{\gamma^2}\int_x^{\frac{1}{2}} \frac{1}{ y^2(1-y)^2} dy } x^{-{ \frac{8(\lambda_1+\lambda_2)}{\gamma^2} } } dx
\end{align*}
for some positive constant $\kappa$, where we used the inequality { $(1-y)^2 \geq \tfrac{1}{4}$} for $y \in (0,\tfrac{1}{2})$ in the first estimate. By exploiting the inequality $(1-y)^2 \leq 1$, we bound the last term above with
\begin{align*}
  \kappa\int_0^\epsilon & \frac{2}{\gamma^2 x^2(1-x)^2} e^{ -\frac{2\lambda_2}{\gamma^2}\int_x^{\frac{1}{2}} \frac{1}{ y^2 } dy } x^{-{ \frac{8(\lambda_1+\lambda_2)}{\gamma^2} }} dx \leq \kappa \int_0^\epsilon \frac{2}{\gamma^2 x^2(1-x)^2} x^{-{ \frac{8(\lambda_1+\lambda_2)}{\gamma^2} } } e^{ -\frac{2\lambda_2}{\gamma^2} \frac{1}{x} } dx,
\end{align*}
which is finite.
As for integrability in a neighborhood of $1$, we notice that the process $\Pi$ is symmetric, in the sense that $\Pi$ and $1-\Pi$ solve the same equation \eqref{eq:filtering:filtered_dynamics} with $\lambda_1$ and $\lambda_2$ inverted.
Then, the same computations show that the density of the speed measure of $1 - \Pi$ is integrable in $0$, and so is the density of the speed measure of $\Pi$ in $1$.
\end{proof}

Let $\alpha \in (\sup a_+(y),\inf a_-(y) )$, and recall the definition of $(V,\lambda)$ from \eqref{eq:main:built_potential_weak} and \eqref{eq:main:built_value_weak}.
We are finally ready to show that the control $\xi^\star$ solves the original singular stochastic control problem.
\begin{theorem}[Optimal control]\label{filtered:thm:optimal_control}
The policy $\xi^\star$ solution to the Skorohod reflection problem \eqref{eq:filtered:skorohod_reflection} is optimal for the ergodic stochastic singular control problem.
Moreover, the problem has a value, given by
\[
\lambda^\star = \varlimsup_{T \uparrow \infty}\frac{1}{T}\E_{y}\left[\int_0^T \lambda(\Pi_t)dt\right] = \inf_{\xi \in \cB} \varlimsup_{T \to +\infty} \frac{1}{T} \, \E_{x,y}\left[\int_0^T c(X_t^\xi) \, dt + K_+ \xi^+_T + K_- \xi^-_T\right].
\]
\end{theorem}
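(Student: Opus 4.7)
My approach is to apply the weak verification Theorem~\ref{thm:HJB_sol_weak} to the pair $(V,\lambda)$ defined by \eqref{eq:main:built_potential_weak}--\eqref{eq:main:built_value_weak} and to the Skorokhod-reflecting policy $\xi^\star$ produced by Lemma~\ref{filtered:lemma:reflection}, and then to convert the value profile $\lambda(\Pi_t)$ into a genuine constant $\lambda^\star$ by invoking Lemma~\ref{filtered:lemma:stationary_distribution} together with Remark~\ref{rmk:ergodicity}. First I would check that the standing hypotheses of Theorem~\ref{thm:HJB_sol_weak} are in force: Hypothesis~\ref{conj:main}(I)--(II) is precisely Lemma~\ref{filtered:lemma:regularity_boundary}, (III) is Lemma~\ref{filtered:lemma:weak_solution}, and the integrability $\cL_{(\widehat{X}^0,\widehat{Y})}U = \cL_{(X^0,\Pi)}U \in L^\infty_{loc}(\cO)$ is Lemma~\ref{filtered:lemma:integral_differential_operator}. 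Lemma~\ref{filtered:lemma:candidate_potential:derivatives} yields $V \in \bbW^{2,\infty}_{loc}(\cO) \cap \dC^2(\overline{\cC})$, while \eqref{eq:main:built_value_weak}, combined with the $\dC^1$-regularity of $U$ (Theorem~\ref{filtered:thm:regularity_up_to_boundary}) and the continuity of the problem data, gives $\lambda \in \dC((0,1))$; since $\Pi$ takes values in the compact interval $[0,1]$, $(\lambda(\Pi_t))_{t \geq 0}$ is uniformly bounded, hence $d\P \otimes dt$-integrable on any finite interval.

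The delicate step is the verification of the identity \eqref{thm:HJB_sol_weak:differential_identity}. The plan is to rewrite $\cL_{(X^0,\Pi)}V(x,y)$ using the explicit expressions for $V_x,V_{xx},V_{xy},V_y,V_{yy}$ provided by Lemma~\ref{filtered:lemma:candidate_potential:derivatives}, perform the $\Psi$-change of variables so that every term is expressed in terms of $\widehat{U}$ on $\R^2$, and then reassemble the result as the integral from $\alpha$ to $x$ of $\partial_{x'}\bigl[\tfrac{1}{2}\sigma^2\widehat{U}_x + b\widehat{U} + \sigma(\rho\zeta)\widehat{U}_y + c\bigr]$ plus the boundary term at $x'=\alpha$. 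The boundary term equals $\lambda(y)$ by construction, and the integrand simplifies to $\cL_{(X^0,\Pi)}U(x',y) + b_x(x',y)U(x',y) + c_x(x',y)$ after invoking Lemma~\ref{filtered:lemma:integral_differential_operator}. This is the step requiring the most care, because $U$ itself does not lie in $\bbW^{2,\infty}_{loc}(\cO)$ due to the degeneracy of $\cL_{(X^0,\Pi)}$; routing the computation through $\widehat{U}$, for which the corresponding regularity holds by Lemma~\ref{filtered:lemma:dynkin_z:value_function}, is exactly what makes the manipulation rigorous and is the main obstacle I expect to encounter.

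Once \eqref{thm:HJB_sol_weak:differential_identity} is established, Lemma~\ref{filtered:lemma:reflection} supplies an admissible $\xi^\star \in \cB$ satisfying \eqref{eq:thm:HJB_sol_weak:optimal_ctrl} (with $X^{\xi^\star}_t$ even uniformly bounded, so \eqref{eq:controls:admissibility} is trivially met). Theorem~\ref{thm:HJB_sol_weak} then immediately yields the optimality of $\xi^\star$ and the representation
\[
\varlimsup_{T \to \infty}\tfrac{1}{T}\E_{y}\!\left[\int_0^T \lambda(\Pi_t)\,dt\right] = \inf_{\xi \in \cB} \varlimsup_{T \to \infty} \tfrac{1}{T}\,\E_{x,y}\!\left[\int_0^T c(X_t^\xi)\,dt + K_+ \xi^+_T + K_- \xi^-_T\right].
\]
To conclude that the common value is independent of $y$, I would invoke Lemma~\ref{filtered:lemma:stationary_distribution}: since $\Pi$ admits a stationary distribution $p^\Pi_\infty$ on $[0,1]$ and $\lambda$ is bounded and continuous on $(0,1)$, the ergodic theorem for one-dimensional diffusions combined with dominated convergence yields $\tfrac{1}{T}\E_y[\int_0^T \lambda(\Pi_t)\,dt] \to \int_0^1 \lambda(y')\, p^\Pi_\infty(dy') \eqqcolon \lambda^\star$ independently of $y \in (0,1)$, precisely as foreseen in Remark~\ref{rmk:ergodicity}, which identifies the value of the ergodic problem.
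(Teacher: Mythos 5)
Your proposal mirrors the paper's proof step by step: you check Hypothesis~\ref{conj:main} via Lemmata~\ref{filtered:lemma:regularity_boundary} and~\ref{filtered:lemma:weak_solution}, import the regularity of $V$ from Lemma~\ref{filtered:lemma:candidate_potential:derivatives} and the $L^\infty_{loc}$ integrability of $\cL_{(X^0,\Pi)}U$ from Lemma~\ref{filtered:lemma:integral_differential_operator}, verify \eqref{thm:HJB_sol_weak:differential_identity} by routing the computation through $\widehat{U}$ in the $(x,z)$-coordinates, invoke Lemma~\ref{filtered:lemma:reflection} for admissibility of $\xi^\star$, apply Theorem~\ref{thm:HJB_sol_weak}, and finally pass to the constant value via Lemma~\ref{filtered:lemma:stationary_distribution} and Remark~\ref{rmk:ergodicity}. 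This is exactly what the paper does, and your identification of the change-of-variables bookkeeping as the delicate point is correct.

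One step in your argument does not hold as written. You claim that $(\lambda(\Pi_t))_{t\geq 0}$ is uniformly bounded ``since $\Pi$ takes values in the compact interval $[0,1]$.'' But $\lambda$ is only shown to be continuous on the \emph{open} interval $(0,1)$, and $\Pi$ lives in $(0,1)$ as well (the endpoints are entrance-not-exit); a continuous function on an open interval need not be bounded, so compactness of the closure does not deliver a uniform bound. The paper closes this gap differently: it invokes the uniform estimates $\vert U_x(\alpha,y)\vert \leq \kappa_1$, $\vert U_y(\alpha,y)\vert \leq \kappa_2$ from Remark~\ref{filtered:rmk:derivatives_bound} (which follow from the $L^2$-uniform-integrability argument in the proof of Theorem~\ref{filtered:thm:regularity_up_to_boundary} and rely on Assumption~\ref{application:assumptions_discount}), together with the boundedness of $U$, $y(1-y)$, and $b(\alpha,\cdot)$, to conclude directly that $\lambda$ is bounded on $(0,1)$. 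You should replace your compactness appeal with these explicit derivative bounds; otherwise the $d\P\otimes dt$-integrability of $\lambda(\Pi_t)$ is not established. The rest of your argument is sound.
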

\begin{proof}
We verify that the assumptions of Theorem \ref{thm:HJB_sol_weak} are satisfied.
By Lemma \ref{filtered:lemma:candidate_potential:derivatives}, $V \in \bbW^{2,\infty}_{loc}(\cO) \cap \dC^2(\cC)$.
We now verify that equality \eqref{thm:HJB_sol_weak:differential_identity} is satisfied.
By explicit computation using the derivative of $V$ given by \eqref{eq:filtered:candidate:derivatives}, it holds
\begin{align*}
    \frac{1}{2} & \sigma^2V_{xx}(x,y) + \sigma\gamma y(1-y)V_{xy}(x,y)  + \frac{1}{2}\gamma^2y^2(1-y)^2V_{yy}(x,y) \\
    & =  \frac{1}{2}\sigma^2\Big( \widehat{U}_{z}\big(\alpha,z(\alpha,y)\big) + \widehat{U}_{x}\big(\alpha,z(\alpha,y)\big) \Big) \\
    & \:\: + \int_{\alpha}^{x}\Big( \frac{1}{2}\sigma^2\widehat{U}_{xx}\big(x',z(x',y)\big) +\frac{1}{2}\sigma\gamma(2y-1)\widehat{U}_{z}\big(x', z(x',y) \big) \Big)dx'.
\end{align*}
As for the first order derivatives, by using the fundamental theorem of calculus, the identity $b(x,y) = \mu(x,z(x,y))$ and \eqref{eq:filtered:candidate:derivatives}, we deduce
\begin{align*}
    b & (x,y) V_{x}(x,y) = b(x,y) U(x,y) = \int_\alpha^x \frac{d}{dx'}\Big[ b(x',y) U(x',y) \Big]dx' + b(\alpha,y)U(\alpha,y)  \\
    & = \int_\alpha^x \Big( -\delta \widehat{U}(x',z(x',y)) + \mu(x',z(x,y))\widehat{U}_{x}(x',z(x',y))  \\
    & \quad + \mu(x',z(x',y))z_{x}(x',y)\widehat{U}_{z}(x',z(x',y)) \Big)dx' \\
    & \quad + \mu\big(\alpha,z(\alpha,y)\big)\widehat{U}\big(\alpha,z(\alpha,y)\big)
\end{align*}
and
\begin{multline*}
    \big( \lambda_2 - (\lambda_1 + \lambda_2)y \big) V_{y} (x,y) = \big( \lambda_2 - (\lambda_1 + \lambda_2)y \big)\int_\alpha^x U_y(x',y)dx' \\
    = \int_\alpha^x \big( \lambda_2 - (\lambda_1 + \lambda_2)y \big)z_{y}(x',y) \widehat{U}_{z}(x',z(x',y))dx'.
\end{multline*}
Therefore, we have
\begin{equation}\label{eq:thm:optimal_control:PDE1}
\begin{aligned}
    & \cL_{(X^0,\Pi)}V(x,y) + c(x) \\
    & =  \mu\big(\alpha,z(\alpha,y)\big)\widehat{U}\big(\alpha,z(\alpha,y)\big)+c(\alpha) +\frac{1}{2}\sigma^2\Big(\widehat{U}_{z}\big(\alpha,z(\alpha,y)\big) +\widehat{U}_{x}\big(\alpha,z(\alpha,y)\big) \Big) \\
    & + \int_\alpha^x \bigg( -\delta \widehat{U}(x',z(x',y)) + c'(x') + \mu(x',z(x,y))\widehat{U}_{x}(x',z(x',y)) + \frac{1}{2}\sigma^2\widehat{U}_{xx}\big(x', z(x',y) \big)  \\
    & + \Big(\big( \lambda_2 - (\lambda_1 + \lambda_2)y \big)z_{y}(x',y) + \mu(x',z(x',y))z_{x}(x',y) +\frac{1}{2}\sigma\gamma(2y-1) \Big) \widehat{U}_{z}(x',z(x',y)) \bigg)dx'.
\end{aligned}
\end{equation}
Notice that it holds
\begin{align*}
q\big(x,z(x,y)\big) & = \big( \lambda_2 - (\lambda_1 + \lambda_2)y \big)z_{y}(x,y) + \mu(x,z(x,y))z_{x}(x,y) +\frac{1}{2}\sigma\gamma(2y-1),
\end{align*}
so that \eqref{eq:thm:optimal_control:PDE1} can be rewritten as
\begin{equation}\label{eq:thm:optimal_control:PDE2}
\begin{aligned}
    & \cL_{(X^0,\Pi)}V(x,y) + c(x) =  \mu\big(\alpha,z(\alpha,y)\big)\widehat{U}\big(\alpha,z(\alpha,y)\big)+c(\alpha) \\
    & +\frac{1}{2}\sigma^2\Big(\widehat{U}_{z}\big(\alpha,z(\alpha,y)\big) +\widehat{U}_{x}\big(\alpha,z(\alpha,y)\big) \Big) \\
    & + \int_\alpha^x \bigg( -\delta \widehat{U}(x',z(x',y)) + c'(x') + \cL_{(X^0,Z)}\widehat{U}(x',z(x,y)) \bigg)dx'.
\end{aligned}
\end{equation}
By identity \eqref{eq:filtered:U_hat_change_of_variables} { and} Lemma \ref{filtered:lemma:integral_differential_operator}, noticing that $\widehat{U}_x(x,z(x,y))=U_x(x,y) + \frac{\gamma}{\sigma}y(1-y)U_y(x,y)$ and $\widehat{U}_z(x,z(x,y))=\frac{\gamma}{\sigma}y(1-y)U_y(x,y)$, we finally get that \eqref{eq:thm:optimal_control:PDE2} is equivalent to 
\[
    \cL_{(X^0,\Pi)}V(x,y) + c(x)  - \lambda(y) = \int_{\alpha}^x \bigg( \cL_{(X^0,\Pi)}U(x',y)  -\delta U(x',y) + c'(x') \bigg)dx'.
\]
As $\cL_{(X^0,\Pi)}U \in L^\infty_{loc}(\cO)$ and $(x',y) \in \cC$ for any $(x',y) \in \{(x,y) \in \cO: \, a_+(y) < x < \alpha \}$, we invoke \eqref{eq:U_var_ineq} to conclude
\[
    \cL_{(X^0,\Pi)}V(x,y) + c(x) - \lambda(y) = \int_{a_+(y)}^x \bigg( \cL_{(X^0,\Pi)}U(x',y)  -\delta U(x',y) + c'(x') \bigg)dx'.
\]
i.e. \eqref{thm:HJB_sol_weak:differential_identity}.

As for $\lambda(y)$, we notice that it is continuous, as $b(x,y)$, $\gamma y (1-y)$ are so and $U \in \dC^1(\cO)$. Thus, $ \lambda \in L^\infty_{loc}\big((0,1)\big)$.
By employing the bounds \eqref{eq:U_derivatives:final_bounds} on $U_x$ and $U_y$, we deduce that $\lambda(y)$ is bounded.
Therefore, the process $(\lambda(\Pi_t))_{t \geq 0}$ is bounded as well, hence $d\P_{y}\otimes dt$ integrable for any $T>0$.

Arguing as in \cite[Lemma A.1]{federico2023inventory_unknown_demand}, one has $\P( (X^{\xi^\star}_t,\Pi_t) \in \cC ) = 1$, for all $t \geq 0$.
As $\xi^\star$ is admissible by Lemma \ref{filtered:lemma:reflection}, all the assumptions of Theorem \ref{thm:HJB_sol_weak}. We conclude that $\xi^\star$ is an optimal control and that \eqref{eq:HJB_sol_weak:value_representation} holds.
{ Since $\Pi$ admits a stationary distribution by Lemma \ref{filtered:lemma:stationary_distribution}}, Remark \ref{rmk:ergodicity} implies that $\lambda^\star(y) = \varlimsup_{T \uparrow \infty}\frac{1}{T}\E_{y}[\int_0^T \lambda(\Pi_t)dt]$ is constant in $y$, equal to the value of the ergodic stochastic singular control problem $\lambda^\star$.
\end{proof}

\subsection{Inventory control with observable mean-reversion level}\label{sec:application:full_observation}
Consider a complete filtered probability space $(\Omega,\cF,\bbF = (\cF_t)_{t \geq 0},\P)$, equipped with two { correlated Brownian motions $W^1$ and $W^2$, with correlation factor $\rho$ such that $\vert \rho \vert < 1$.
The joint dynamics of the inventory process $X$ and the mean-reversion level $Y$ are given by
\begin{equation}\label{application:elliptic:dynamics}
\left\{ \begin{aligned}
    & d X^{\xi}_t = \big(Y_t -\delta X^\xi_t \big)dt + \sigma_1 dW^1_t +d\xi^+_t - d\xi^-_t, && X^\xi_0 = x, \\
    & d Y_t = \big(m - b Y_t \big)dt + \sigma_2 dW^2_t, && Y_0 = y,
\end{aligned} \right.
\end{equation}
with $\xi=(\xi^+,\xi^-)$ in $\cB$, $m \in \R$ and $b$, $\sigma_1$ and $\sigma_2$ positive constants.}
Notice that, as all parameters are positive, the pair $(X^0,Y)$ is ergodic.
The firm aims at finding an optimal control which realizes
\begin{equation}\label{eq:application:elliptic:value}
    \inf_{ \xi \in \cB } \varlimsup_{T \to \infty}\frac{1}{T}\E_{x,y}\left[\int_0^T c(X^{\xi}_t)dt +K_+\xi^+_T +K_-\xi^-_T \right],
\end{equation}
where the instantaneous cost $c:\R \to \R$ depends only on the inventory level.
To simplify and shorten the analysis, we assume that the cost function $c$ is quadratic and it is given by
\[
c(x) = \frac{1}{2}x^2,
\]
although a more general function, in the spirit of Section \ref{sec:application:partial_observation}, could be considered.
In the following sections, we show that we can apply Theorem \ref{thm:HJB_sol} and we build an optimal control $\xi^\star$.

\smallskip
We make the following assumption on the parameters:
\begin{assumption}\label{elliptic:assumption:parameters}
We have $b > \delta$.
\end{assumption}
This assumption is needed to ensure the Lipschitz-property of the free-boundaries $a_\pm$ of the associated Dynkin game, which in turn will ensure the integrability of $(\lambda(Y_t))_{t \geq 0}$.

\subsubsection{The associated Dynkin game}

Here, we state the structure of the Dynkin game associated to the control problem \eqref{eq:application:elliptic:value}.
As the volatility coefficients in \eqref{application:elliptic:dynamics} do not depend on the state process $X^\xi$, the Markov process underlying the auxiliary Dynkin game is just given by $(X^0,Y)$ solution of \eqref{application:elliptic:dynamics} with constant control $\xi \equiv 0$.
When needed, in order to simplify the notation, we write $(X^{x,y},Y^{x,y})$ to stress the dependence on the initial value $(x,y) \in \R^2$.

The Dynkin game associated to the control problem is given by
\begin{equation}\label{eq:elliptic:dynkin}
U(x,y) \coloneqq \inf_{\tau} \sup_ {\theta} \E\left[ \int_0^{\tau \land \theta} e^{-\delta t} c'(X^{x,y}_t) dt +K_- e^{-\delta \tau}\ind_{\tau < \theta} - K_+ e^{-\delta \theta}\ind_{\theta < \tau} \right],
\end{equation}
where $c'(x) =  x$.
{ We stress that the infinitesimal generator $\cL_{(X^0,Y)}$ is uniformly elliptic, as the volatility matrix of $(X^0,Y)$ is constant, $\rho \neq  \pm 1$, the drifts are linear and the two obstacles are constant functions. Then, 
by Theorems 3.2, 3.4 and 4.1 in \cite{friedman1982variational} (upon also using Exercises 2 and 5), we have there exists a unique solution $\Tilde{U} \in \bbW^{2,\infty}_{loc}(\R^2)$ of the pointwise variational inequality \eqref{eq:U_var_ineq}.
Define the sets $\cS_+$ and $\cS_-$ by
\[
\cS_+ \coloneqq \{ (x,y) \in \R^2: \, \Tilde{U}(x,y) \leq -K_+ \}, \quad \cS_- \coloneqq \{ (x,y) \in \R^2: \, \Tilde{U}(x,y) \geq K_- \},
\]
and $\cC \coloneqq \R^2 \setminus (\cS_+ \cup \cS_-)$.
By Sobolev embedding, $\Tilde{U} \in \dC^{1}(\R^2)$. Moreover, due to the uniform ellipticity of the generator $\cL_{(X^0,Y)}$, classical Schauder's estimates imply that $\Tilde{U} \in \dC^{\infty}(\cC)$, as the running cost function $c'(x)= x$ belongs to $\dC^{\infty}(\R^2)$ (see \cite[Theorem 6.13]{gilbarg_trudinger2001elliptic}). Furthermore, it is clear that $\Tilde{U} \in \dC^{\infty}(\mathcal{S}_+ \cup \mathcal{S}_-),$ being $\Tilde{U}$ constant therein.}

We verify that the solution $\Tilde{U}$ coincides with the value function of the Dynkin game $U$.
To this extent, take $\tau^* = \inf\{t \geq 0: \, (X^{x,y}_t,Y^{y}_t) \in \cS_-\}$ and $\theta^* = \inf\{t \geq 0: \, (X^{x,y}_t,Y^{y}_t) \in { \cS_+}\}$.
Take any $(x,y) \in \R^2$. By applying a weak version of Dynkin's formula to $e^{-\delta t } \Tilde{U}(X^{x,y}_{t},Y^{y}_{t})$ (see, e.g., \cite{bensoussan_lions1982variational}, Lemma 8.1 and Theorem 8.5, pp. 183-186), for any stopping time $\theta$ we have
\begin{multline*}
\E[e^{-\delta (t \land \tau^* \land \theta) }\Tilde{U}(X^{x,y}_{t \land \tau^* \land \theta},Y^{y}_{t \land \tau^* \land \theta})] \\
= \Tilde{U}(x,y) + \E\left[ \int_0^{t \land \tau^* \land \theta} e^{-\delta s}(\cL_{(X^0,Y)}\Tilde{U}(X^{x,y}_s,Y^{y}_s) -\delta \Tilde{U}(X^{x,y}_s,Y^{y}_s) )ds \right].
\end{multline*}
By using the fact that $\cL_{(X^0,Y)}\Tilde{U}(x,y) -\delta \Tilde{U}(x,y) \leq -c'(x)$ for any $(x,y) \in \R^2 \setminus \cS_-$, we get 
\begin{equation*}
\Tilde{U}(x,y) \geq  \E\left[ \int_0^{t \land \tau^* \land \theta} e^{-\delta s}c'(X^{x,y}_s) ds + e^{-\delta (t \land \tau^* \land \theta) } \Tilde{U}(X^{x,y}_{t \land \tau^* \land \theta},Y^{x,y}_{t \land \tau^* \land \theta}) \right]
\end{equation*}
By sending $t \to \infty$, we deduce
\begin{align*}
\Tilde{U}(x,y) & \geq  \E\left[ \int_0^{\tau^* \land \theta} e^{-\delta s}c'(X^{x,y}_s) ds + e^{-\delta (\tau^* \land \theta) } \Tilde{U}(X^{x,y}_{\tau^* \land \theta},Y^{x,y}_{\tau^* \land \theta}) \right] \\
& \geq \E\left[ \int_0^{\tau^* \land \theta} e^{-\delta s}c'(X^{x,y}_s) ds -K_+ e^{-\delta \theta } \ind_{\theta < \tau^*} + K_- e^{-\delta \tau^* }\ind_{\tau^* < \theta} \right]
\end{align*}
where we used $\Tilde{U}(x,y) \geq -K_+$ on $\R^2$ and $\Tilde{U}(x,y) = K_-$ on $\cS_-$.
Relying on the fact that $\cL_{(X^0,Y)}\Tilde{U}(x,y) -\delta \Tilde{U}(x,y) \geq -c'(x)$ for any $(x,y) \in \R^2 \setminus \cS_+$, the same computations show
\begin{equation}\label{eq:elliptic:saddle_point}
    \Tilde{U}(x,y) \leq \E\left[ \int_0^{\tau \land \theta^*} e^{-\delta s}c'(X^{x,y}_s) ds -K_+ e^{-\delta \theta^* } \ind_{\theta^* < \tau} + K_- e^{-\delta \tau }\ind_{\tau < \theta^*} \right] 
\end{equation}
for any $(\tau,\theta)$ stopping times.
Finally, the exploiting $-\delta \Tilde{U}(x,y) + \cL_{(X^0,Y)}\Tilde{U}(x,y) + c'(x) = 0$ for $(x,y) \in \cC$, we get that \eqref{eq:elliptic:saddle_point} holds with equality for $(\tau^*,\theta^*)$.
This proves that $(\tau^*,\theta^*)$ identifies a saddle point for the Dynkin game \eqref{eq:elliptic:dynkin}, and thus $\Tilde{U}(x,y)$ coincides with the value function $U(x,y)$.

\smallskip
It remains to express the sets $\cS_+$ and $\cS_-$ in terms of the free-boundaries $a_\pm:\R \to \R$ and to investigate their properties.
To this extent, we first notice that, for any fixed $y \in \R$, $x \mapsto U(x,y)$ is non-decreasing.
To see this, fix $y \in \R$, let $x_1 \leq x_2$.
Recalling that $b > \delta$, by linearity we have,
\begin{equation}\label{eq:elliptic:linear_representation}
Y^y_t = e^{-b t}y + Y^0_t, \quad { X^{x,y}_t = e^{-\delta t}x + \frac{1}{\delta - b} (e^{-b t}-e^{-\delta t}) y + X^{0,0}_t,}
\end{equation}
for any $t \geq 0$ $\P_{x,y}$-a.s., where $(X^{0,0},Y^{0,0})$ denotes the solution of \eqref{application:elliptic:dynamics} starting from $(x,y) = (0,0)$.
This clearly implies that $X^{x_1,y}_t \leq  X^{x_2,y}_t$, $\P$-a.s., for all $t \geq 0$. 
As $c'$ is strictly increasing, we also have $c'(X^{x_1,y}_t) \leq  c'(X^{x_2,y}_t)$ so that we deduce $U(x_1,y) \leq U(x_2,y)$.
By the same reasoning as above, { recalling that $b > \delta$ by Assumption \ref{elliptic:assumption:parameters} }, one has that $X^{x,y_1}_t \leq  X^{x,y_2}_t$ whenever $y_1 \leq y_2$, which in turn implies $U(x,y_1) \leq U(x,y_2)$.

\smallskip
We then set
\begin{equation}\label{eq:elliptic:barriers}
    a_-(y) \coloneqq \inf\{ x \in \R: \; U(x,y) \geq K_- \}, \quad a_+(y) \coloneqq \sup\{ x \in \R: \; U(x,y) \leq -K_+ \},
\end{equation}
with the conventions $\sup \emptyset = -\infty$, $\inf\emptyset = +\infty$.
Then, continuity and monotonicity of $U$, upon exploiting the bounds $-K_+ \leq U(x,y) \leq K_-$, yields
\begin{equation}\label{eq:elliptic:regions_U}
\begin{aligned}
    \cS_+ & = \{ (x,y) \in \R^2: \, x \leq a_+(y) \}, \quad \cS_- = \{ (x,y) \in \R^2: \, x \geq a_-(y) \}, \\
    \cC & = \{ (x,y) \in \R^2: \, a_+(y) < x < a_-(y) \}.
\end{aligned}
\end{equation}

\begin{lemma}\label{elliptic:lemma:regularity_boundary}
\mbox{}
\begin{enumerate}[resume,label=(\roman*)]
    \item \label{elliptic:lemma:regularity_boundary:monotonicity} The maps $a_\pm(y)$ are non-increasing. Moreover, $a_-$ is right-continuous and $a_+$ is left-continuous.
    \item \label{elliptic:lemma:regularity_boundary:separated} 
    For any $y \in \R$, it holds $a_+(y) \leq (c')^{-1}(-K_+\delta) < (c')^{-1}(K_-\delta) \leq a_-(y)$.
    \item \label{elliptic:lemma:regularity_boundary:finiteness} For any $y \in \R$, $a_+(y) > -\infty$ and $a_-(y) < +\infty$.
    \item \label{elliptic:lemma:regularity_boundary:lipschitz} $a_+$ and $a_-$ are Lipschitz continuous.
\end{enumerate}
\end{lemma}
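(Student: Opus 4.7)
The plan is to establish the four parts sequentially, with (i) and (ii) following arguments parallel to Lemma~\ref{filtered:lemma:regularity_boundary} in the partially observable setting, part (iii) requiring a somewhat more careful argument due to the unboundedness of $Y$, and part (iv) being the genuinely new contribution enabled by Assumption~\ref{elliptic:assumption:parameters}.

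For (i), the monotonicity of $y \mapsto a_\pm(y)$ follows from the monotonicity of $y \mapsto U(x,y)$ already verified via the comparison $X^{x,y_1}_t \leq X^{x,y_2}_t$ for $y_1 \leq y_2$ (which uses $b>\delta$) together with the monotonicity of $c'$. The right-continuity of $a_-$ and left-continuity of $a_+$ are immediate from the continuity of $U$ and the definition of $a_\pm$ as generalized inverses: if $y_n \downarrow y$ then monotonicity gives $a_-(y_n) \leq a_-(y)$, while strict inequality in the limit would, by continuity of $U$, produce a point $x$ with $U(x,y) \geq K_-$ and $x < a_-(y)$, contradicting the definition. Part (ii) is proved exactly as in Lemma~\ref{filtered:lemma:regularity_boundary}(ii): the weak Dynkin formula applied to $(e^{-\delta(t \wedge \theta^*)} U(X^{x_o,y_o}_{t\wedge\theta^*}, Y^{y_o}_{t\wedge\theta^*}) + \int_0^{t\wedge\theta^*} e^{-\delta s} c'(X^{x_o,y_o}_s)\,ds)_{t \geq 0}$ yields the submartingale property at the saddle point, so dividing by $t$ and letting $t \downarrow 0$ gives $c'(x_o) - \delta K_- \geq 0$ for $(x_o,y_o)\in \cS_-$, and symmetrically $c'(x_o) + \delta K_+ \leq 0$ for $(x_o,y_o)\in \cS_+$; inverting $c'(x)=x$ yields the claimed separation.

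The main substance lies in part (iv). The linearity of \eqref{application:elliptic:dynamics} yields the representation
\[
X^{x,y}_t = e^{-\delta t} x + f(t)\, y + X^{0,0}_t, \qquad f(t) := \frac{e^{-bt} - e^{-\delta t}}{\delta - b},
\]
where $X^{0,0}$ denotes the solution starting from $(0,0)$. Under Assumption~\ref{elliptic:assumption:parameters} one has $f(t) \geq 0$ and the map $t \mapsto e^{\delta t}f(t) = (1 - e^{(\delta-b)t})/(b-\delta)$ is strictly increasing from $0$ to $M := 1/(b-\delta)$. Consequently, for any $y_1 < y_2$ and any $x \in \R$, with $C := M$,
\[
X^{x+C(y_2-y_1),\,y_1}_t - X^{x,\,y_2}_t \;=\; (y_2-y_1)\bigl(e^{-\delta t} C - f(t)\bigr) \;\geq\; 0 \qquad \forall \, t \geq 0,\ \P\text{-a.s.}
\]
Since $c'$ is strictly increasing, this pathwise comparison yields $M_{x+C(y_2-y_1),y_1}(\tau,\theta) \geq M_{x,y_2}(\tau,\theta)$ for all pairs of stopping times, and hence $U(x+C(y_2-y_1), y_1) \geq U(x, y_2)$. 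Taking $x = a_-(y_2)$ gives $U(a_-(y_2) + C(y_2-y_1), y_1) \geq K_-$, so $a_-(y_1) \leq a_-(y_2) + C(y_2-y_1)$; combined with the monotonicity of $a_-$ from (i) this yields $|a_-(y_1) - a_-(y_2)| \leq C|y_2 - y_1|$. The Lipschitz bound for $a_+$ is obtained symmetrically by fixing $x = a_+(y_1)$ and moving $y$ upward.

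For (iii) one proceeds as follows: take $\theta = \infty$ in the representation $U(x,y) = \sup_\theta \inf_\tau M_{x,y}(\tau,\theta)$ to obtain $U(x,y) \geq \inf_\tau M_{x,y}(\tau,\infty)$, and rewrite
\[
M_{x,y}(\tau,\infty) \;=\; K_- + \E\Bigl[\int_0^\tau e^{-\delta s} \bigl(X^{x,y}_s - \delta K_-\bigr)\,ds\Bigr].
\]
An application of It\^o's formula to $e^{-\delta t} X^{x,y}_t$ yields the closed-form identity $\E[\int_0^\tau e^{-\delta s} X^{x,y}_s\,ds] = \delta^{-1} \E[x - e^{-\delta \tau} X^{x,y}_\tau] + R(y,\tau)$, where the remainder $R(y,\tau)$ is bounded uniformly in $\tau$ through a similar martingale identity for $Y$. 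Using the linear representation of $X^{x,y}_\tau$, one checks that $\E[x - e^{-\delta\tau}X^{x,y}_\tau] = x(1 - \E[e^{-2\delta\tau}]) - y\E[e^{-\delta\tau}f(\tau)] - \E[e^{-\delta\tau}X^{0,0}_\tau]$ is bounded below by a constant plus $x$ times a $\tau$-dependent nonnegative factor, allowing one to conclude that for $x$ sufficiently large the infimum is $\geq K_-$ uniformly in $\tau$, whence $U(x,y) = K_-$ and $a_-(y) < +\infty$; the bound for $a_+$ is symmetric. The main obstacle here, and the reason this step is more delicate than its filtered counterpart, is the unboundedness of $Y$, which rules out the direct comparison with a one-dimensional auxiliary process used in Lemma~\ref{filtered:lemma:regularity_boundary}(iii); the argument above bypasses this by exploiting the explicit linear-Gaussian structure of $(X,Y)$.
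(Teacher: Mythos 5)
Parts (i) and (ii) of your proposal follow the paper's approach (which in turn mirrors the proof of Lemma~\ref{filtered:lemma:regularity_boundary}), and they are correct.

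Part (iv) is correct and takes a genuinely different route from the paper, one that is arguably cleaner. The paper obtains the Lipschitz property by passing through the $\epsilon$-level sets $a^\epsilon_+$ of $U$, computing the ratio $U_y/U_x$ at those curves via the explicit formulas \eqref{eq:elliptic:derivatives}, bounding $\lvert da^\epsilon_+/dy\rvert$ uniformly by $2/(b-\delta)$ via the implicit function theorem, and then passing to the limit by Ascoli--Arzelà. You instead use the affine dependence of $X^{x,y}_t$ on $(x,y)$ to produce a pathwise comparison $X^{x+C(y_2-y_1),y_1}_t\geq X^{x,y_2}_t$ with $C=1/(b-\delta)$, which transfers directly to $U$ through the game functional and yields the two-sided Lipschitz bound once combined with (i). This is shorter, avoids any differentiability of $U$ or of the level sets, and even produces a slightly sharper Lipschitz constant ($1/(b-\delta)$ versus $2/(b-\delta)$). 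One small housekeeping point: to run the argument with $x=a_-(y_2)$ you need $a_-(y_2)<+\infty$, i.e.\ part (iii), and either continuity of $U$ at the boundary or a trivial $\epsilon$-dilation of the argument; both are available, but make the dependence explicit.

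Part (iii) has a genuine gap, and this is the step where your route diverges most from the paper's. You reduce the claim to showing $\inf_\tau M_{(x,y)}(\tau,\infty)\geq K_-$, i.e.\ $\inf_\tau \E\bigl[\int_0^\tau e^{-\delta s}(X^{x,y}_s-\delta K_-)\,ds\bigr]\geq 0$, for all $x$ sufficiently large. Decomposing $X^{x,y}_s=e^{-\delta s}x+\tilde X^{0,y}_s$ gives
\[
\E\Bigl[\int_0^\tau e^{-\delta s}(X^{x,y}_s-\delta K_-)\,ds\Bigr]=x\,\E\Bigl[\int_0^\tau e^{-2\delta s}\,ds\Bigr]+\E\Bigl[\int_0^\tau e^{-\delta s}(\tilde X^{0,y}_s-\delta K_-)\,ds\Bigr],
\]
and what you actually need is a uniform-in-$\tau$ bound of the form
\[
-\E\Bigl[\int_0^\tau e^{-\delta s}(\tilde X^{0,y}_s-\delta K_-)\,ds\Bigr]\leq C(y)\,\E\Bigl[\int_0^\tau e^{-2\delta s}\,ds\Bigr]
\]
for \emph{all adapted} $\tau$, with no additive constant, so that taking $x\geq C(y)$ closes the argument. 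The bound you state --- $\E[x-e^{-\delta\tau}X^{x,y}_\tau]$ ``bounded below by a constant plus $x$ times a $\tau$-dependent nonnegative factor'' --- is true but does not give this: the ``constant'' part of the lower bound must be allowed to be negative and does not vanish as the denominator $\E[\int_0^\tau e^{-2\delta s}\,ds]$ does (e.g.\ at $\tau=0$), so the desired inequality is not uniform over stopping times. The second term involves the unbounded Gaussian factor $X^{0,0}_\tau$, and a Cauchy--Schwarz estimate gives $\E[e^{-\delta\tau}\lvert X^{0,0}_\tau\rvert]\lesssim\sqrt{\E[e^{-2\delta\tau}]}$, which is \emph{not} $O(1-\E[e^{-2\delta\tau}])$ near $\tau=0$; moreover the supremum over all adapted $\tau$ of the ratio is exactly the kind of optimal stopping quantity you are trying to control, so the argument risks circularity. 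The paper avoids this entirely by arguing by contradiction: if $a_+(y_o)=-\infty$, it builds an explicit semi-infinite box $Q=(-\infty,\gamma)\times(y_1,y_2)\subseteq\cC$ and uses the martingale characterization of $U$ with the exit time $\tau^Q$. The key point is that as $x\to-\infty$, $\tau^Q$ converges to the exit time of $Y$ from $(y_1,y_2)$, a fixed strictly positive random variable, so $\E[\int_0^{t\wedge\tau^Q}e^{-2\delta s}\,ds]$ stays bounded away from zero, and the term $x\,\E[\int_0^{t\wedge\tau^Q}e^{-2\delta s}\,ds]$ drags the upper bound on $U$ to $-\infty$, contradicting $U\geq -K_+$. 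To fix your (iii), you would either need to establish the uniform ratio bound above by an actual optimal stopping argument (which is not trivial) or switch to the paper's box construction.
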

\begin{proof}
Points \ref{elliptic:lemma:regularity_boundary:monotonicity} and \ref{elliptic:lemma:regularity_boundary:separated} can be proven exactly as in Lemma \ref{filtered:lemma:regularity_boundary}.

As for point \ref{elliptic:lemma:regularity_boundary:finiteness}, 
suppose there exists $y_o \in \R$ so that $a_+(y_o) = -\infty$.
Then, as $a_+$ is non-increasing, we also have $a_+(y) = -\infty$ for any $y \geq y_o$.
Take $y_o < y_1 < y_2$ and $\gamma < \inf_{y \in [y_1,y_2]} a_-(y)$.
{ Let $Q \coloneqq (-\infty,\gamma) \times (y_1,y_2)$ and notice that $Q \subseteq \cC$.
Consider again $\theta^* = \inf \{t \geq 0: \, (X^{x,y}_t,Y^{x,y}_t) \in \cS_+\}$, $\tau^* = \inf \{t \geq 0: \, (X^{x,y}_t,Y^{x,y}_t) \in \cS_-\}$, and define $\tau^Q \coloneqq \inf \{t \geq 0: \, (X^{x,y}_t,Y^{x,y}_t) \notin Q\}$.
Let $(x,y) \in Q$.
By the same reasoning as in Lemma \ref{filtered:lemma_regularity_U} in Section \ref{sec:application:partial_observation:filtered_problem}, the semi-harmonic characterization of $U$ (as given in \cite[Theorem 2.1]{peskir2008optimal_stopping_games}) and Doob's stopping theorem yield that the process
\[
    \Big( e^{-\delta (t \land \theta^* \land \tau^* \land \tau^Q)}U(X^{x,y}_{t \land \theta^* \land \tau^* \land \tau^Q},Y^{x,y}_{t \land \theta^* \land \tau^* \land \tau^Q}) + \int_0^{t\land \theta^* \land \tau^* \land \tau^Q}e^{-\delta s}c'(X^{x,y}_s)ds \Big)_{t \geq 0}
\]
is a martingale. Moreover, as $Q \subseteq \cC$,  for any $(x,y) \in Q$ we have $\tau^Q \leq \theta^* \land \tau^* = \inf \{ t \geq 0: (X^{x,y}_t,Y^{x,y}_t) \notin \cC \}$.
Thus, for any $(x,y) \in Q$, we get
\begin{align*}
    & - K_+ < U(x,y) = \E\left[ U(X^{x,y}_{t\land \tau^Q},Y^{y}_{t\land \tau^Q}) \right] +  \E\left[\int_0^{t\land \tau^Q}e^{-\delta s}c'(X^{x,y}_s)ds  \right] \\
    & \leq K_- +  \E\left[\int_0^{t\land \tau^Q}e^{-\delta s}c'(X^{x,y}_s)ds  \right] \\
    & = K_-  +  x \E\left[\int_0^{t\land \tau^Q}e^{-2\delta s} \right]+ \E\left[\int_0^{t\land \tau^Q}e^{-\delta s} X^{0,y}_s ds \right] \\
    & \leq K_-  +  x \E\left[\int_0^{t\land \tau^Q}e^{-2\delta s} \right]+ \int_0^{\infty}e^{-\delta s}\E\left[ \vert X^{0,y}_s \vert \right]ds \leq \kappa +  x \E\left[\int_0^{t\land \tau^Q}e^{-2\delta s} \right],
\end{align*}
where $\kappa$ is a positive constant independent of $x$.
We now take the limit as $x \to -\infty$: noticing that $\tau^Q \to \inf\{t \geq 0: Y_t^{y} \notin (y_1,y_2)\} =: \tau^{(y_1,y_2)}$ as $x \to -\infty$, the expectations on the right-hand side converge to finite values.
In particular, $\lim_{x \to -\infty}\E\left[\int_0^{t\land \tau^Q}e^{-2\delta s} \right] = \frac{1}{2\delta}(1-\E[e^{-2\delta (t \land \tau^{(y_1,y_2)})}]) > 0$, which in turn implies that the right-hand side converges to $-\infty$ as $x \to -\infty$, which leads to a contradiction.
}
Finiteness of $a_-$ is dealt with analogously.

\smallskip
We now show that $a^\epsilon_+$ is Lipschitz continuous, by following \cite{deangelis_stabile2019lipschitz}.
We show the Lipschitz continuity of the lower boundary $a_+$, as the upper boundary $a_-$ can be treated analogously.
For $\epsilon \in (0,1]$, set
\begin{equation*}
    a^{\epsilon}_+(y) \coloneqq \sup \{ x \in \R: \, U(x,y) \leq -K_+ + \epsilon \}.
\end{equation*}
We notice that $a^\epsilon_+(y)$ converges to $a_+(y)$ pointwise as $\epsilon \downarrow 0$.
Indeed, as $x \mapsto U(x,y)$ is increasing for any fixed $y$ and so is $y \mapsto U(x,y)$ for fixed $x$, we have $(x,a^\epsilon_+(y)) \in \cC$ for any $(x,y)$ and $\epsilon > 0$, so that $a^\epsilon_+(y) \geq a_+(y)$. Moroever, by definition, $(a^\epsilon_+(y))_{\epsilon > 0}$ is non-increasing in $\epsilon$. This implies $\lim_{\epsilon \downarrow 0} a^\epsilon_+(y) \geq a_+(y)$.
By using the continuity of $U$, we have $U(\lim_{\epsilon \downarrow 0} a^\epsilon_+(y),y) = \lim_{\epsilon \downarrow 0} U(a^\epsilon_+(y),y) = \lim_{\epsilon \downarrow 0} (-K_+ +\epsilon) = -K_+$, which implies that $(\lim_{\epsilon \downarrow 0} a^\epsilon_+(y),y)$ belongs to $\cS_+$ and thus $\lim_{\epsilon \downarrow 0} a^\epsilon_+(y) \leq a_+(y)$.

We have that $a^\epsilon_+$ is continuously differentiable over $\R$.
This can be proved as follows: we first notice that, by the same calculations of the proof of Theorem \ref{filtered:thm:regularity_up_to_boundary}, and exploiting the linearity of the dynamics of $(X^0,Y)$ and of instantaneous cost $c'(x)$, the partial derivatives of $U$ at the points $(a^\epsilon_+(y),y)$ can be represented as
\begin{equation}\label{eq:elliptic:derivatives}
\begin{aligned}
& U_x(a^\epsilon_+(y),y) = \E\left[\int_0^{\tau^*_\epsilon\land \theta^*_\epsilon} e^{-2\delta t} dt \right], \\
& U_y(a^\epsilon_+(y),y) =  { \frac{1}{\delta - b}} \E\left[\int_0^{\tau^*_\epsilon\land \theta^*_\epsilon} e^{-\delta t}\left( e^{-b t} - e^{-\delta t} \right) dt \right],
\end{aligned}
\end{equation}
where $(\tau^*_\epsilon,\theta^*_\epsilon)$ are the exit times of $(X^{(a^\epsilon_+(y),y)},Y^{(a^\epsilon_+(y),y)})$ from $\cS_+$ and $\cS_-$ respectively.
As $(a^\epsilon_+(y),y) \in \cC$ for every $y \in \R$ and $\epsilon > 0$ and $U \in \dC^2(\cC)$, the implicit function theorem implies
\begin{equation}\label{eq:elliptic:lemma:regularity_boundary:derivative_a_epsilon}
\begin{aligned}
\frac{d a^\epsilon_+}{dy}(y) = -\frac{ U_y(a^\epsilon_+(y),y) }{U_x(a^\epsilon_+(y),y)} = -{ \frac{1}{\delta - b}} \frac{ \int_0^\infty \P(\tau^*_\epsilon\land \theta^*_\epsilon \leq t) e^{-\delta t}\left( e^{-b t} - e^{-\delta t} \right) dt }{\int_0^\infty \P(\tau^*_\epsilon\land \theta^*_\epsilon \leq t) e^{-2\delta t}  dt }.
\end{aligned}
\end{equation}
Thanks to Assumption \ref{elliptic:assumption:parameters}, we can bound the derivative of $a^\epsilon_+$ uniformly by a constant $L$ independent of $\epsilon$ and $y$.
Indeed, we have
\[
\left \vert \frac{d a^\epsilon_+}{dy}(y) \right\vert \leq { \frac{1}{b - \delta}} \frac{ \int_0^\infty \P(\tau^*_\epsilon\land \theta^*_\epsilon \leq t) e^{-2\delta t}\left\vert e^{-(b-\delta) t} -1 \right\vert dt }{\int_0^\infty \P(\tau^*_\epsilon\land \theta^*_\epsilon \leq t) e^{-2\delta t}  dt } \leq { \frac{2}{b - \delta}}.
\]
Next, fix $y \in \R$ and let $y_1$, $y_2$ so that $ y_1 < y < y_2$.
By point \ref{elliptic:lemma:regularity_boundary:finiteness} and monotonicity of $a^\epsilon_+$ for any $\epsilon > 0$, we have $-\infty < a_+(y_1) \leq a^\epsilon_+(y) \leq a^1_+(y_2) < \infty$ for any $y$ in the compact interval $[y_1,y_2]$.
Thus, $(a^\epsilon_+)_{\epsilon \in (0,1]}$ is a sequence of equicontinuous uniformly bounded functions defined on the compact $[y_1,y_2]$.
Ascoli-Arzelà's theorem then implies that, up to a subsequence, $a^\epsilon_+$ converges uniformly to $a_+$ on $[y_1,y_2]$.
This allows us to deduce that $a_+$ is Lipschitz continuous on $[y_1,y_2]$ with constant $L$ independent of the interval. As the interval is arbitrary, this implies that $a_+$ is Lipschitz over the whole line.
\end{proof}

\subsubsection{Solution to the ergodic singular control problem}

As prescribed by Theorem \ref{thm:HJB_sol}, we now build the optimal control $\xi^\star=(\xi^{\star,+},\xi^{\star,-}) \in \cB$ which keeps the process inside the open set $\cC$.
Consider the process $\xi^\star=(\xi^{\star,+},\xi^{\star,-})$ so that the pair $(X^{\xi^\star},\xi^\star)$ solves the Skorohod reflection problem:
\begin{equation}\label{eq:elliptic:skorohod_reflection}
\left\{ \begin{aligned}
    & a_+(Y_t) \leq X^{\xi^\star}_t \leq a_-(Y_t), \quad \P_{x,y}\text{-a.s.,} \;\; \text{for almost all }t \geq 0,\\
    & \xi^{\star,+}_t = \int_0^t \ind_{\{ X^{\xi^\star}_{s-} \leq a_+(Y_s) \}}d\xi^{\star,+}_s, \quad \xi^{\star,-}_t = \int_0^t \ind_{\{ X^{\xi^\star}_{s-} \geq a_-(Y_s) \}}d\xi^{\star,-}_s, \quad \P_{x,y}\text{-a.s.,} \;\; \forall t \geq 0,
\end{aligned} \right.
\end{equation}
By the same techniques of Lemma \ref{filtered:lemma:reflection} in Section \ref{sec:application:partial_observation}, it can be proved that there exists a solution to \eqref{eq:elliptic:skorohod_reflection}.
As the proof is essentially the same, we omit it.
With respect to \eqref{eq:filtered:skorohod_reflection}, we notice that we dropped the third request about minimality of the jump size, as in this case the boundaries $a_\pm$ are proven to be continuous.
Moreover, $\xi^\star$ is admissible: as the control $\xi^\star$ keeps the process in $[a_+(Y_t),a_-(Y_t)]$ and $a_\pm$ are Lipschitz continuous by Lemma \ref{elliptic:lemma:regularity_boundary}, we have $\vert X^{\xi^\star}_T \vert \leq \kappa(1 + \vert Y_T \vert )$.
By exploiting the explicit representation of $Y$, we deduce $\lim_{T \to \infty}\frac{1}{T} \E_{y}[\vert Y_T \vert] = 0$, which gives the admissibility of $\xi^\star$ as required by Definition \ref{def:admissible_policies}.

\smallskip
We finally apply Theorem \ref{thm:HJB_sol} to guarantee that the control $\xi^\star$ is optimal.
\begin{theorem}\label{elliptic:thm:solution_control}
Let $\xi^\star$ be given by \eqref{eq:elliptic:skorohod_reflection}. Then, $\xi^\star$ is optimal and the stochastic singular control problem has a value $\lambda^\star$, given by
\[
\lambda^\star = \varlimsup_{T \to \infty} \frac{1}{T} \E_{y}\left[\int_0^T\lambda(Y_t)dt \right],
\]
with $\lambda$ defined accordingly to \eqref{eq:main:built_value}.
\end{theorem}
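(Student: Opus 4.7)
The plan is a direct verification that all hypotheses of Theorem \ref{thm:HJB_sol} are satisfied, and then to invoke Remark \ref{rmk:ergodicity} to conclude that the resulting value profile is constant in $y$.

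\textbf{Verification of Hypothesis \ref{conj:main} and the regularity of $U$.} The existence and the regularity properties of $a_\pm$ required by items (I)--(II) have already been established in Lemma \ref{elliptic:lemma:regularity_boundary}: $a_+$ and $a_-$ are finite, $a_+(y) \le (c')^{-1}(-K_+\delta) < (c')^{-1}(K_-\delta) \le a_-(y)$ (so the separation condition is automatic since the $(c')^{-1}$-bounds are constants in $y$), and the sign conditions \eqref{eq:main:sign_S_-}--\eqref{eq:main:sign_S_+} hold since $c'(x) = x$ and $b_x(x,y) = -\delta$ reduce them to $x \ge (c')^{-1}(K_-\delta)$ and $x \le (c')^{-1}(-K_+\delta)$, respectively. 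As pointed out right before Lemma \ref{elliptic:lemma:regularity_boundary}, the uniform ellipticity of $\cL_{(X^0,Y)}$ together with the results of \cite{friedman1982variational} yields that $U$ is the unique $\bbW^{2,\infty}_{loc}(\R^2)$-solution of the variational inequality \eqref{eq:U_var_ineq}, while Schauder's theory gives $U \in \dC^{\infty}(\cC)$ since $c' \in \dC^\infty(\R)$; this is item (III).

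\textbf{Construction of $(V,\lambda)$ and its regularity.} Pick any $\alpha \in (\sup_y a_+(y),\inf_y a_-(y))$, which is non-empty thanks to Lemma \ref{elliptic:lemma:regularity_boundary}(ii), and define $V$ and $\lambda$ by \eqref{eq:main:built_potential}--\eqref{eq:main:built_value}. Since $U \in \bbW^{2,\infty}_{loc}(\R^2) \cap \dC^\infty(\cC)$, the integral representation of $V$ transmits this regularity: $V \in \bbW^{2,\infty}_{loc}(\R^2)$ and $V \in \dC^2(\cC)$ (using that partial derivatives in $y$ commute with integration along $x$ on $\cC$). Moreover, $\vert V(x,y)\vert \le (\vert K_+\vert \vee \vert K_-\vert)\vert x - \alpha\vert$ by the bound $-K_+ \le U \le K_-$, giving the linear growth in $x$ uniformly in $y$ required by Theorem \ref{thm:HJB_sol}.

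\textbf{Integrability of $\lambda(Y_t)$ and admissibility of $\xi^\star$.} This is the main technical point, and it is precisely where the Lipschitz continuity of $a_\pm$ from Lemma \ref{elliptic:lemma:regularity_boundary}(iv) together with Assumption \ref{elliptic:assumption:parameters} enters. From \eqref{eq:main:built_value}, $\lambda(y)$ is a linear combination of $c(\alpha)$, $U(\alpha,y)b(\alpha,y)$, $U_x(\alpha,y)$ and $U_y(\alpha,y)$. The terms $U$ and $U_x$ are bounded on compacts by Sobolev embedding; for $U_y$ I would use the integral representation \eqref{eq:elliptic:derivatives} (suitably extended from $(a_+^\epsilon(y),y)$ to the point $(\alpha,y)$ by standard flow-differentiation of the functional inside \eqref{eq:elliptic:dynkin}, exploiting the linearity of $(X^0,Y)$ in initial data) to get a bound independent of $y$; combined with the linear growth of $b(\alpha,y) = \alpha - \delta\alpha + \cdots$ in $y$, this yields $\vert \lambda(y)\vert \le \kappa(1+\vert y\vert)$. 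Since $Y$ is an ergodic Ornstein--Uhlenbeck process with all finite moments bounded uniformly in $t$, $(\lambda(Y_t))_{t\in[0,T]}$ is $d\P\otimes dt$-integrable for every $T > 0$. Admissibility of the Skorokhod reflection $\xi^\star$ has already been checked in the paragraph preceding the theorem, using the Lipschitz bound $\vert X_T^{\xi^\star}\vert \le \kappa(1 + \vert Y_T\vert)$ together with $\lim_{T\to\infty}T^{-1}\E_y[\vert Y_T\vert] = 0$.

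\textbf{Conclusion via Theorem \ref{thm:HJB_sol} and Remark \ref{rmk:ergodicity}.} All the hypotheses of Theorem \ref{thm:HJB_sol} are in place: Hypothesis \ref{conj:main}, $U \in \bbW^{2,\infty}_{loc}$, $V \in \dC^2(\cC)$, integrability of $\lambda(Y_\cdot)$, admissibility of $\xi^\star$, and the reflection identities \eqref{eq:main:optimal_control}, which are exactly \eqref{eq:elliptic:skorohod_reflection}. Therefore $\xi^\star$ is optimal and equation \eqref{eq:HJB_sol:value_representation} holds. Finally, since $Y$ is the one-dimensional Ornstein--Uhlenbeck process in \eqref{application:elliptic:dynamics} with mean-reversion rate $b > 0$, it admits a Gaussian stationary distribution $p_\infty^Y$, so by Remark \ref{rmk:ergodicity} the $\varlimsup$ equals the $y$-independent constant $\int_\R \lambda(y)p_\infty^Y(dy) =: \lambda^\star$, giving the stated representation. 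The main obstacle in this verification is controlling $U_y(\alpha,y)$ uniformly (or with at most linear growth) in $y$, and this is exactly what the Lipschitz property of the free boundaries together with the representation \eqref{eq:elliptic:derivatives} delivers.
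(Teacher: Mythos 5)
Your proposal is correct and follows essentially the same route as the paper's proof: pick $\alpha$ between $\sup a_+$ and $\inf a_-$ (possible by Lemma~\ref{elliptic:lemma:regularity_boundary}), transfer the $\bbW^{2,\infty}_{loc}\cap\dC^2(\cC)$ regularity of $U$ to $V$, bound $U_x$, $U_y$ via \eqref{eq:elliptic:derivatives} to get $\lvert\lambda(y)\rvert\le\kappa(1+\lvert y\rvert)$ and hence integrability of $\lambda(Y_\cdot)$, invoke Theorem~\ref{thm:HJB_sol} together with the Skorokhod reflection \eqref{eq:elliptic:skorohod_reflection}, and finish with Remark~\ref{rmk:ergodicity} using the stationary law of the Ornstein--Uhlenbeck factor $Y$. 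The only difference worth noting is that the paper's proof additionally records, by reference to a lemma in the cited literature, that $\P\big((X_t^{\xi^\star},Y_t)\in\cC\big)=1$ for each $t\ge 0$; you rely only on the literal statement of \eqref{eq:main:optimal_control} (process in the closed region a.e.), whereas the paper's extra step ensures that the occupation measure does not charge the free boundary, which is what makes the a.e.~PDE identity $\cL_{(X^0,Y)}V=\lambda-c$ on $\cC$ usable inside Dynkin's formula in Theorem~\ref{thm:verification}; this small step is worth adding explicitly.
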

\begin{proof}
By Lemma \ref{elliptic:lemma:regularity_boundary}, there exists $\alpha \in \R$ so that $\sup_{y \in \R} a_+(y) < \alpha < \inf_{y \in \R} a_-(y)$, so that we can define $V$ as in \eqref{eq:main:built_potential}.
Moreover, as $U$ belongs to $\bbW^{2,\infty}_{loc}(\R^2) \cap \dC^2(\cC)$, $V \in \bbW^{2,\infty}_{loc}(\R^2) \cap \dC^2(\cC)$ as well.
Recall from \eqref{eq:main:built_value} that $\lambda(y)$ is given by
\[
\lambda(y) = c(\alpha) + b(\alpha,y)U(\alpha,y) + \frac{1}{2}\sigma_1^2 U_x(\alpha,y) + \rho\sigma_1\sigma_2 U_y(\alpha,y).
\]
By \eqref{eq:elliptic:derivatives}, $U_x(x,y)$ and $U_y(x,y)$ are bounded over $\R^2$.
As $U(x,y)$ is bounded as well and $b(x,y)$ is linear, we have $\vert \lambda(y) \vert \leq \kappa (1 + \vert y \vert )$, which implies that $(\lambda(Y_t))_{t \geq 0}$ belongs to $L^1(\Omega \times [0,T])$ for any $T \geq 0$.
Finally, $\xi^\star$ is admissible and, arguing as in \cite[Lemma A.1]{federico2023inventory_unknown_demand}, $\P( (X^{\xi^\star}_t,Y_t) \in \cC ) = 1$, for all $t \geq 0$.
Thus, all assumptions of Theorem \ref{thm:HJB_sol} are verified, and we can conclude that $\xi^\star$ is an optimal control and that \eqref{eq:HJB_sol:value_representation} holds.
As $Y$ admits a stationary distribution, Remark \ref{rmk:ergodicity} implies that $\varlimsup_{T \uparrow \infty}\frac{1}{T}\E_{y}[\int_0^T \lambda(\Pi_t)dt]$ is constant in $y$ and equal to the value of the ergodic stochastic singular control problem $\lambda^\star$.
\end{proof}

\section*{Acknowledgments}
\noindent 
Steven Campbell, Georgy Gaitsgori, Ioannis Karatzas, Gechun Liang, Nizar Touzi, and Renyuan Xu are acknowledged by the authors for stimulating discussions.

Funded by the Deutsche Forschungsgemeinschaft (DFG, German Research Foundation) – Project-ID 317210226 – SFB 1283.

This work started during the visits at the Center for Mathematical Economics (IMW) at Bielefeld University of Alessandro Calvia, who thanks IMW and the SFB 1283 for the support and hospitality.

The first author is a member of the Gruppo Nazionale per l’Analisi Matematica, la Probabilità e le loro Applicazioni (GNAMPA) of the Istituto Nazionale di Alta Matematica "Francesco Severi" (INdAM).

\bibliographystyle{abbrv}
\bibliography{biblio}

\end{document}